\newif\ifuseprecompiled
\pgfplotsset{compat=newest}
\pgfplotsset{plot coordinates/math parser=false}
\definecolor{RWTHbordeaux}{RGB}{161,16,53}
\definecolor{RWTHblue}{RGB}{0,83,159}
\definecolor{RWTHteal}{RGB}{0,152,161}
\definecolor{RWTHmagenta}{RGB}{138,32,60} 
\definecolor{RWTHsuperlightblue}{RGB}{199,221,242}
\definecolor{RWTHgrey}{RGB}{51,51,51}
\definecolor{RWTHblack}{RGB}{0,0,0}
\definecolor{RWTHorange}{RGB}{246,168,0}
\definecolor{RWTHpetrolgreendark}{RGB}{58,146,59}
\definecolor{RWTHpetrolgreenlight}{RGB}{202,220,197}
\setlist[enumerate]{itemsep=0pt,topsep=0pt,parsep=1pt} 
\setlist[itemize]{itemsep=0pt,topsep=0pt,parsep=1pt} 
\titleformat*{\section}{\large \bfseries}
\titleformat*{\subsection}{\normalsize \bfseries}
\titleformat*{\subsubsection}{\normalsize \bfseries}
\renewcommand*{\eqref}[1]{%
  \hyperref[{#1}]{\textup{\tagform@{\ref*{#1}}}}%
}
\numberwithin{equation}{section} 
\declaretheoremstyle[
    headfont=\bfseries,
    notefont=\normalfont,
    bodyfont=\itshape,
    postheadhook={},
]{mytheorem}
\declaretheorem[style=mytheorem,numberwithin=section]{theorem}
\declaretheorem[style=mytheorem,numberlike=theorem]{corollary}
\declaretheorem[style=mytheorem,numberlike=theorem]{lemma}
\declaretheorem[style=mytheorem,numberlike=theorem]{example}
\declaretheorem[style=mytheorem,numberlike=theorem]{definition}
\declaretheorem[style=mytheorem,numberlike=theorem]{remark}
\newdimen\dummy
\def\plottitlefontsize{\footnotesize}
\def\plottitlefontsizesmaller{\tiny} 
\def\plottickfontsize{\footnotesize}
\newlength\figureheight
\newlength\figurewidth
\newcommand{\lhb}[1]{\ensuremath{\mathfrak{L}(#1)}}
\newcommand{\rhb}[1]{\ensuremath{\mathfrak{R}(#1)}}
\newcommand{\geomean}[1]{\ensuremath{\langle #1 \rangle_{\operatorname{geo}}}}
\newcommand{\arithmean}[1]{\ensuremath{\langle #1 \rangle_{\operatorname{ar}}}}
\DeclareMathOperator{\diag}{diag}
\DeclareMathSymbol{\mlq}{\mathord}{operators}{``}
\DeclareMathSymbol{\mrq}{\mathord}{operators}{`'}
\newcommand{\Index}{{\mathcal I}}
\newcommand{\argmin}{\mathop{\rm argmin}}
\newcommand{\Ind}{\mathcal{I}}
\newcommand{\R}{\ensuremath{\mathbb{R}}}
\newcommand{\Lf}{\ensuremath{L}}
\newcommand{\Rf}{\ensuremath{R}}
\newcommand{\fL}{\ensuremath{\mathcal{L}}}
\newcommand{\fR}{\ensuremath{\mathcal{R}}}
\newcommand{\fN}{\ensuremath{\mathcal{N}}}
\newcommand{\sigR}{\ensuremath{\theta}}
\newcommand{\SigR}{\ensuremath{\Theta}}
\newcommand{\sigL}{\ensuremath{\gamma}}
\newcommand{\SigL}{\ensuremath{\Gamma}}
\newcommand{\rL}{\ensuremath{r_{\gamma}}}
\newcommand{\rR}{\ensuremath{r_{\theta}}}
\newcommand{\nL}{\ensuremath{n_{L}}}
\newcommand{\nR}{\ensuremath{n_{R}}}
\newcommand{\nN}{\ensuremath{n_{N}}}
\newcommand{\B}{\ensuremath{B}}
\newcommand{\Ps}{\ensuremath{S}}
\newcommand{\gammaL}{\ensuremath{s_1}}
\newcommand{\gammaR}{\ensuremath{s_2}}
\newcommand{\filter}{\ensuremath{\mathcal{F}}}
\newcommand{\lowfil}{\ensuremath{F}}
\newcommand{\N}{\ensuremath{\mathbb{N}}}
\newcommand*\intd{\mathop{}\!\mathrm{d}} 
\newcommand{\ck}{\boxtimes}
\newcommand{\kp}{\otimes}
\def\new#1#2{#2}
\def\moved#1#2{#2}
\def\newwo#1{#1}
\def\movedwo#1{#1}
\def\oldwo#1{\noindent}
\def\oldwojg#1{\noindent}
\def\oldjg#1#2{\noindent}
\def\old#1#2{\noindent}
\def\oldSTATE#1#2{\noindent}
\def\BC#1{}
\def\BCn#1#2{}
\newcommand*\samethanks[1][\value{footnote}]{\footnotemark[#1]}
\author{Lars Grasedyck\thanks{IGPM, RWTH Aachen University, Templergraben 55, 52056 Aachen \protect\\ {\tt lgr.rwth-aachen.de}, {\tt kraemer@igpm.rwth-aachen.de} \protect\\
Both authors gratefully acknowledge support by the DFG priority programme 1648 under grant GR3179/3-1.} \and Sebastian Kr\"amer\samethanks[1]}
\title{Stable ALS Approximation in the TT-Format for Rank-Adaptive Tensor Completion} 
\date{}
\begin{document}

\maketitle

\begin{abstract}
Low rank tensor completion is a highly ill-posed inverse problem,  
particularly when the data model is not accurate, and
some sort of regularization is required in order to solve it. 
In this article we focus on the calibration of 
the data model.  
For alternating optimization,
we observe that existing rank adaption methods do not
enable a continuous transition between manifolds of different ranks.
We denote this characteristic as \textit{instability (under truncation)}. As a consequence of this property, arbitrarily 
small changes in the iterate can have arbitrarily large influence on the further reconstruction.
We therefore introduce a singular value based regularization to the standard alternating least squares 
(ALS), which is motivated by averaging in microsteps. We prove its \textit{stability}
and derive a natural semi-implicit rank adaption strategy.
We further prove that the standard ALS microsteps for completion problems are only stable on 
manifolds of fixed ranks, and only around points that have what we
define as \textit{internal tensor restricted isometry property, iTRIP}. 
In conclusion, numerical experiments are provided that show improvements of the reconstruction quality 
up to orders of magnitude in the new Stable ALS Approximation (SALSA) compared to standard ALS and the well known Riemannian optimization RTTC.
\\

\smallskip 
\noindent \textbf{Keywords.} tensor completion,  MPS,  tensor train,  TT,  hierarchical Tucker,  HT,  alternating optimization,  ALS,  high-dimensional,  low rank,  SVD,  ill-posedness,  stability \\

\smallskip 
\noindent \textbf{AMS subject classifications.} 15A18, 
15A69, 
65F22, 
90C06, 
90C31 \\
\end{abstract} 
\section{Introduction}
Low rank tensor completion is a highly ill-posed inverse problem.
In order for any recovery to succeed, regularity assumptions are required.
This is either achieved by adding certain penalty terms,
or by using an explicit reduction of degrees of freedom, which in the
context of high-dimensional tensors can be obtained by using low rank representations,
cf. \cite{GrKrTo13_Ali,Ha14_Num,HaSc14_Ten}.
The main goal of this work is to derive a rank adaptive method for tensor completion\BCn{b1_}{b_}
and moreover to discuss the benefits of such as well as the reasons why heuristics tend to be insufficient.
Since the concept of \textit{stability} in the sense of Definition \ref{cowitr} 
has not yet been considered in literature,\BCn{a1}{a}
the initial part is dedicated to the simpler matrix case (Sections \ref{sec:notcons} and \ref{sec:stabmatrixcomp})
in order to provide an easier access\BCn{i1_}{i_}. The subsequent analysis will focus on the importance of these concepts
to least squares tensor completion where the calibration of model complexity
is more challenging.
\subsection{Introduction to Stability for Ill-posed Inverse Problems through the Example of Matrix Completion} 
In the setting of low rank matrix completion, 
the target of recovery is a matrix $M \in \R^{n \times m}$ which is only observable at points \BCn{l1_}{l_}\BCn{c1}{c}
\[ M|_P = \{ M_p \}_{p \in P} \in \R^P \new{36}{\cong \R^{|P|}} \quad \mbox{for} \quad P \newwo{\subset \mathcal{I} :=} \{1,\ldots,n\} \times \{1,\ldots,m\} ,\]
where $P$ is a given, fixed sampling set, which we hence can not enlarge.
One very strict regularity assumption is given by $\mathrm{rank}(M) = r$ for some sufficiently small $r \in \mathbb{N}$, which
leads to the minimization problem
      \begin{align*} \mathrm{minimize\ } & \|A - M\|_{P} \\ 
	\mathrm{subject\ to\ } & A \in \R^{n \times m}, \ \mathrm{rank}(A) \leq r,
      \end{align*} 
$\mbox{where } \|B\|^2_{\rm{P}} := \sum_{i \in P} B^2_i$ for matrices $B$.
A favorable data model for this task is the low rank representation, 
i.e. a function
\[ \tau_r: (X,Y) \mapsto A = XY \in \R^{n \times m} \quad \mbox{for} \quad (X,Y) \in \mathcal{D}_r :=\R^{n \times r} \times \R^{r \times m}.\]
Since every matrix has a unique rank, we can partition
the target space $\R^{n \times m}$ into the disjoint subsets 
\[ \mathcal{T}_r := \{ A \mid \mathrm{rank}(A) = r \}, \quad r = 0,\ldots,\min(n,m).\] 
With $\mathrm{image}(\tau_r) = \bigcup_{\widetilde{r} \leq r} \mathcal{T}_{\widetilde{r}}$ in mind, the optimization is performed on the representation or data space $\mathcal{D}_r$.
In an alternating least squares (ALS) method for example, one then applies two
optimization methods $\mathcal{M}^{(1)}$, $\mathcal{M}^{(2)}$, 
\begin{align} \label{MX} \mathcal{M}^{(1)}_r(X,Y) & := (\argmin_{\widetilde{X}} \|\widetilde{X} Y - M\|_P,Y),\\
\label{MY} \mathcal{M}^{(2)}_r(X,Y) & := (X,\argmin_{\widetilde{Y}} \|X \widetilde{Y} - M\|_P),
\end{align} 
which in this context are called microsteps.
Note that the $\mathrm{argmin}$ is not necessarily unique, and we choose the element minimizing the Frobenius norm $\|X Y\|_F$.
Formally, for each value of the matrix rank $r$, every single $\mathcal{M}_r^{(1)}$, $\mathcal{M}_r^{(2)}$ is a different function.
\\\\
For most realistic applications, it is more reasonable to relax the regularity assumption to $M$ being \textit{nearly} rank $r$,
which means that after $r$ entries, the singular values of $M$ become sufficiently smaller. Ultimately,
if no assumptions are made, the appropriate model complexity is a matter of the quality and magnitude of $P$ with
respect to $M$.
Yet in the general case, the missing structure of given data hardly allows to obtain knowledge about this relation.\BCn{a2_}{a_}
Therefor, since overestimating the model complexity (i.e. the rank) ultimately leads to flawed results,
a cautious learning process is required to adapt such, as rank increasing strategies already suggest.
Due to the difficult nature of the problem, we do not expect to be able to find the global minimizer, but instead focus\BCn{d2_}{d_}
on single aspects that are likely to improve the approximation quality.
\\\\
Each adaption of the rank during the optimization will cause the algorithm to change between data spaces $\mathcal{D}_r$. 
Intuitively, considering that the 
generated spaces $\mathcal{T}_r$ have pairwise distance $0$ within $\R^{n \times m}$,
one would want
that a change of rank does not have large impact, given the problematic nature of overfitting. This, however, is \BCn{e1}{e}
not true for both $\mathcal{M}^{(1)}, \mathcal{M}^{(2)}$, while 
arbitrarily small perturbations of the iterate $A = \tau_r(X,Y)$ may change its rank.
For ill-posed inverse problems\BCn{a1_}{a_}\BCn{c1_}{c_}, we hence propose the following concept:
\begin{definition}[Stability]\label{cowitr}
  Let $\mathcal{M}$ be a method that
  maps any rank $r$ to a function $\mathcal{M}_r: \mathcal{D}_r \rightarrow \mathcal{D}_r$
  (the optimization method for fixed rank).
  We define the following properties:
  \begin{itemize}
    \item $\mathcal{M}$ is called \textit{representation independent}, if $\tau_r(\mathcal{M}_r(G)) = \tau_r(\mathcal{M}_r(\widetilde{G}))$ 
    for all $r$ and $G,\widetilde{G} \in \mathcal{D}_r$ with $\tau_r(G) = \tau_r(\widetilde{G})$.
    We then define $\tau_r^{-1}$ to map to one possible representation (we want to circumvent the
    use of equivalence classes).
    \item $\mathcal{M}$ is called \textit{\old{2}{fix-rank}\new{2}{fixed-rank} stable}, if it is representation independent\BCn{b1}{b}
    and for any fixed rank $r$, the map 
    $\tau_r \circ \mathcal{M}_r \circ \tau^{-1}_r: \mathcal{T}_r \rightarrow \R^\Ind$ is continuous.
    \item $\mathcal{M}$ is called \textit{stable}, if it is representation independent
    and the function 
   \begin{align}
    \label{fM} f_{\mathcal{M}}: \R^\Ind \rightarrow \R^\Ind,\quad f_{\mathcal{M}}(A) := \tau_{r(A)} \circ \mathcal{M}_{r(A)} \circ \tau^{-1}_{r(A)}(A),
   \end{align}
  where $r(A)$ is the rank of $A$, is continuous. 
  \end{itemize}
\end{definition}
\begin{figure}[htb]
  \begin{center}
      \ifuseprecompiled
      \includegraphics[width=0.98\textwidth]{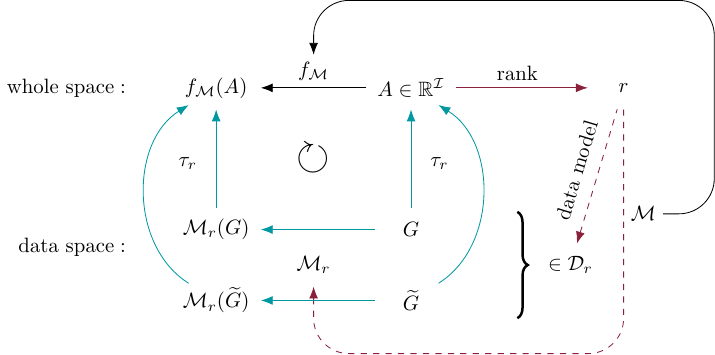}
      \else
      \setlength\figureheight{4.5cm}
      \setlength\figurewidth{0.6\linewidth}
      \tikzsetnextfilename{stability}%
      \input{tikz_base_files/stability/stability.tex}
      \fi
  \end{center}
  \caption{\label{stability_figure} The diagram depicting Definition \ref{cowitr}. Magenta part: depending on the rank of $A$, the  
  method $\mathcal{M}$ provides a specific mapping $\mathcal{M}_r$ to be applied to equivalent representations $G,\widetilde{G} \in \mathcal{D}_r$.
  Teal part: representation independent states
  that $f_{\mathcal{M}}$ is well-defined since both lower paths from $A$ along the data space result in the same output within the whole space. Stability
  requires that this function, the upper path, is continuous.}
\end{figure}
This definition of stability is not restricted to matrix completion. Data spaces, the rank 
(possibly generalized to any model complexity) and the method can be replaced
by any appropriate type, in particular tensor completion in hierarchical tensor formats (cf. Section \ref{sec:gentohighd}). The only 
assumption should usually be a nestedness of spaces, i.e. $\mathcal{T}_r \subset \mathcal{T}_{\widetilde{r}}$
whenever $r \preceq \widetilde{r}$ (entry-wise inequality). \\
Properly calibrating the rank $r$ for unstable methods in the context of ill-posed inverse problems may lead to complications.
Most of the operators applied to representations are stable, e.g. truncations based on 
matrix singular values. The situation however changes if we apply the partial optimization (or micro-)
step $\mathcal{M}^{(1)}$ or $\mathcal{M}^{(2)}$ on a low rank representation: 
%
\begin{example}[Instability of alternating least squares matrix completion steps]\label{inofallesqco}
  Let $a\in\mathbb{R}\setminus\{0,1\}$ be a possibly very small parameter. 
  We consider the target matrix $M$ and an $\varepsilon$-dependent initial approximation 
  $A=A(\varepsilon)$
  \[
  M := 
  \begin{pmatrix}
    \fbox{ ? }   & 1.1 & 0.9 \\
    1   & 1   & 1.1 \\
    1.1 & 1   & 1
  \end{pmatrix}, \quad
  A(\varepsilon) 
  := 
  \begin{pmatrix}
    1 & 1 & 1 \\
    1 & 1 & 1 \\
    1 & 1 & 1 
  \end{pmatrix} 
  + \varepsilon
  \begin{pmatrix}
    0.5+a   & 0.5+a & -a   \\
    1+a & 1+a & -1-a \\
    1-a & 1-a & -1+a
  \end{pmatrix},
  \]
  where the entry $M_{1,1}$ (the question mark above) is not known or given. The matrix $M$ is of
  rank $3$ and $A(\varepsilon)$ is of rank $r = 1$ for $\varepsilon = 0$ and of rank $r = 2$ otherwise.
  We seek a best approximation of (at most) rank $2$ in the least squares sense for the known entries
  of $M$. 
  In a single ALS step, as defined by \eqref{MY}, we replace $Y(\varepsilon)$ of the low rank 
  representation $A(\varepsilon) = X(\varepsilon) Y(\varepsilon)$ by the local minimizer, where in this case
  \[
  A(0) = \begin{pmatrix}
    1 \\ 1 \\ 1
  \end{pmatrix}
  \begin{pmatrix}
    1 & 1 & 1
  \end{pmatrix}
  ,\qquad 
  A(\varepsilon) = \begin{pmatrix}
    1 & 0.5+a   \\
    1 & 1+a \\
    1 & 1-a 
  \end{pmatrix} 
  \begin{pmatrix}
    1 & 1 & 1 \\
    \varepsilon & \varepsilon & -\varepsilon
  \end{pmatrix}
  \mbox{ if } \varepsilon > 0.
  \]
  This optimization yields a new matrix, $B(\varepsilon) = f_{\mathcal{M}^{(2)}}(A(\varepsilon)) = \tau_r \circ \mathcal{M}^{(2)}_r \circ \tau^{-1}_r(A(\varepsilon))$ (independently of the chosen representation),
  given by
  \[
  B(0) = 
  \begin{pmatrix}
    1.05 & * & *\\
    1.05 & * & *\\
    1.05 & * & *
  \end{pmatrix}
  ,\qquad 
  B(\varepsilon) = 
  \begin{pmatrix}
    1+\frac{1}{40a} & * & *\\
    1.0  & * & *\\
    1.1  & * & *
  \end{pmatrix}
  \mbox{ if } \varepsilon > 0.
  \quad (* \mbox{ is some value})
  \]
  Now let $a$ be fixed and let $\varepsilon$ tend to zero so that the initial guess 
  $A(\varepsilon)\to A(0)$. However, $B(\varepsilon) \nrightarrow B(0)$, thus violating the stability. 
  Furthermore, the rank two approximation $B(\varepsilon)$, given an arbitrary, fixed $\varepsilon > 0$, diverges as $a\to 0$,\BCn{d1}{d}
  in particular it is not convergent although the initial guess $A(\varepsilon)$ converges
  to a rank two matrix as $a\to 0$. Thus, the microstep is not even stable for fixed rank. 
  We want to stress that the initial guess is bounded for all $\varepsilon,a\in(0,1)$,
  but the difference between $B(0)$ and $B(\varepsilon)$ is unbounded for $a\to 0$ (cf. Definition \ref{iTRIP}). The
  unboundedness can be remedied by adding a regularization term in the least squares
  functional, e.g. $+\|X Y\|$, but the ALS step remains unstable.
\end{example}
%
\BCn{a3_}{a_}%
This example likewise demonstrates that ALS for tensor completion is not stable \old{25.5}{, and this is fatal, especially}\new{25.5}{and thus, as discussed before, problematic} when adapting
the rank (cf. Section \ref{sec:gentohighd}).
\old{38}{It is easy to see}\new{38}{We will further show} that this is not a marginal phenomenon, but occurs systematically
during any rank change (cf. Example \ref{alsanadfarnocowitr}).

\subsection{Relation to Other Matrix and Tensor Methods}\label{reltother}
%
Whenever a tensor is point-wise available, algorithms such as the TT-SVD \cite{Os11_Ten}
can just establish the exact rank based on its very definition or
a reliable rank estimate as well as representation can be obtained through cross-approximation methods, a setting in which
the subset of used entries can be chosen freely \cite{OsTy10_TTc,BaGrKl13_Bla}.\\
If only indirectly given, adapting the rank of the sought low rank tensor can still be straight-forward, e.g. when 
the rank has to be limited \textit{only} due to computational complexity, while in principle the exact solution
is desired \cite{BeMo13_Num,BaScUs16_Ten}. Here, an optimal regulation of thresholding parameters becomes most important.
This mainly includes classical problems that have been transferred to large scales.
These may for example be solved with iterative methods \cite{BaGr13_Apr,BaSc16_Ite,MaZa12_Sol}, which naturally increase the
rank and rely on subsequent reductions,
or also by rank preservative optimization, such as alternating optimization \cite{HoRowSc12_The,DoSa14_Alt,RoUs13_OnL,EsKh15_Con}, possibly combined
with a separate rank adaption. \\
Provided that the tensor restricted isometry property holds, the task may be interpreted as distance minimization with respect to
a norm that is sufficiently similar to the Frobenius norm and analyzed based on compressed sensing \cite{RaScSt15_Ten}.
Black box tensor completion for a fixed sampling set, however, requires a certain solution to a positive-semi definite linear system.
Hence neither an exact solution is reasonable nor does any norm equivalence hold.
Thus, the available data is easily misinterpreted, the more so if the rank is overestimated, and 
truncation based algorithms, including DMRG \cite{HoRowSc12_The,Je02_Dyn}, are misled. \\
Nuclear norm minimization, being closely related to compressed sensing as well, has a very strong
theoretical background \cite{CaRe09_Exa,CaTa10_The,Gr11_Rec,Re11_ASi} for the matrix case. These approaches rely on a direct adaption of the target function, that is convex relaxation. 
Yet it appears that they are outperformed in practice by alternating least squares approaches \cite{Ja13_Low} and
the simplifications required for an adaption to tensors \cite{LiSh13_AnE,SiTrLaSu14_Lea,GaReYa11_Ten}
do not seem to allow for an appropriate generalization \cite{MuHuWrGo14_Squ}.
Also the approaches which themselves retreat to alternating least squares \cite{Ha15_Mat} treat the iterations as necessity
for the minimization of an objective function with regularization term.
The penalty term is, as usual,
based on the singular values of the output of a microstep (a posteriori), 
as it is also the case for the work \cite{St16_Rie} on tensor completion through Riemannian optimization.\BCn{j12}{j}
Although their term may appear similar to the term we derive (cf. Theorem \ref{miofthalsrerefufoma}), the stability property, on the contrary, requires that the penalty term 
depends on the current singular values before the microstep (a priori), which is an essential difference: we explicitly allow and exploit small singular values instead of penalizing such.
In that sense, we treat each update and adaption as part of a \textit{learning progress}, where the magnitudes of singular values indicate in some
respects an uncertainty of approximation. To the best of our knowledge, this point of view has not yet been considered in literature and
hence a concept of stability as we define it has not been investigated.
\\
For fixed or uniform rank, there have been least squares based proposals in hierarchical tensor formats \cite{KrStVa14_Low,SiHe15_Opt} as well.
The essential adaption of the rank however, including similar matrix approaches, is rarely considered, all the less in numerical tests, and 
remains an open problem in this setting. 
A mentionable approach so far is the rank increasing strategy 
\cite{WeYiZh12_Sol,GrKlKr15_Var} and 
its regularization properties are a first starting point for this article.
\\\\
\textbf{The rest of the article is organized as follows}: 
In Section \ref{sec:notcons}, we further investigate instability
and exemplarily analyze approaches towards it in the matrix case. Based on this insight, we motivate
a variational residual function, derive its minimizer and present a stable algorithm for matrix completion in Section \ref{sec:stabmatrixcomp}.
In Section \ref{sec:gentohighd}, we begin to generalize former results to high dimensional tensors, yet essentially work in three dimensions.
In the main Section \ref{sec:moofthsicorefu}, we then derive its minimizer and prove stability (Theorem \ref{coofthfiadfmist}) for the
thereby obtained regularized microsteps\newwo{, further analyzing these in Section \ref{sec:filter}}. Subsequently, in Section \ref{sec:backtod}, these results
are transferred back to arbitrarily dimensional tensors. 
Section \ref{sec:seimannounraad} finishes with the necessary details for the algorithm, including its rank adaption
as it is naturally given through stable alternating least squares. 
Comprehensive numerical tests (exclusively for unknown ranks) are provided in Section \ref{sec:ne}.
Detailed tables with the numerical values shown in figures can be found in the appendix.
\section{Instability and Approaches to Resolve the Problem}\label{sec:notcons}
%
\old{28}{As previously mentioned, instability poses a systematic flaw in ALS, or for that matter, in any such range based optimization}%
\new{28}{The following example shows that instability can be observed systematically during rank changes in ALS, or more general, in any such range based optimization.
In that sense, the implied complications for ill-posed, inverse problems may frequently occur:} 
\begin{example}[ALS for ill-posed, inverse problems is unstable]\label{alsanadfarnocowitr}
  Consider the microstep $\mathcal{M}^{\newwo{{(2)}}}$ as in \eqref{MY}.
  Let $U \in \R^{n \times r}$, $V \in \R^{m \times r}$ be orthogonal, such that $U \Sigma V^T$ is a truncated SVD\BCn{h1}{h} of a rank $r$ matrix $A = \tau_r(U,\Sigma V^T) \in \R^{n \times m}$.
  We now let $\sigma_r \rightarrow 0$, $\sigma_r > 0$, such that in the limit $A^{\ast} := A|_{\sigma_r = 0}$ has rank $r-1$.
  The update is independent of this last singular value though:
  \begin{align} \label{limit} f_{\mathcal{M}^{\newwo{{(2)}}}}(A) = \tau_r(\mathcal{M}^{\newwo{{(2)}}}_r(U, \Sigma V^T)) = U \argmin_{Y} \|U Y - M\|_P = \lim_{\varepsilon \searrow 0} f_{\mathcal{M}^{\newwo{{(2)}}}}(A|_{\sigma_r = \varepsilon}) \end{align}
  However, if $\sigma_r = 0$, then $A|_{\sigma_r = 0}$ has rank $r-1$ and a truncated SVD $U_c \Sigma_c V_c^T$. Hence, the update
  \[ f_{\mathcal{M}^{\newwo{{(2)}}}}(A|_{\sigma_r = 0}) = \tau_{r-1}(\mathcal{M}^{\newwo{{(2)}}}_{r-1}(U_c, \Sigma_c V_c^T)) = U_c \argmin_{Y} \|U_c Y - M\|_P \]
  is in general different from the limit \eqref{limit}\BCn{n1_}{n_}\new{38}{, given that the range of $U$ is different from $U_c$}. The same holds for an analogous update $\mathcal{M}^{\newwo{{(1)}}}$ of $X = U \Sigma$.
  Note that these updates are indeed representation independent.
\end{example}
The microsteps of ALS in the tensor case behave in the same way\BCn{o1_}{o_}. The only difference is that there are two
tuples of singular values $\sigma^{(\mu-1)}$ and $\sigma^{(\mu)}$ adjacent to a core $G_\mu$ (cf. Lemma \ref{stre}). 
There may be many ways to stabilize the microsteps. However, we aim for an as little distorting as practical way to do this.
A quite successful approach for completion has been the rank increasing strategy, e.g. \cite{WeYiZh12_Sol}.
\old{38}{By the given limitation to all ranks, a regularization is
introduced to the target function. High frequencies, corresponding to small singular values, are
excluded up to a certain progress.}%
\new{38}{Therein, the model complexity is slowly increased, step by step attempting a more distinct approximation. Thereby, local minima
that correspond to overfitting are avoided.}
\\
A similar kind of effect can be achieved by assuming an uncertainty on the current iterate, or,
equivalently, averaging the tensor update function. That way, the level of regularization can be adapted
continuously and is less dependent on the \old{38}{\textit{technical} rank that is currently used.}\new{38}{discrete rank but the more meaningful, real-valued singular values.}  
We will first view this in a minimal fashion for the matrix case and the method $\mathcal{M}^{\newwo{{(2)}}}$ defined by \eqref{MY} (for the remainder
of this section called $\mathcal{M}$ for simplicity).
With this approach, we can motivate an algorithm that is stable under truncation and
allows to straightforwardly adapt ranks\old{38}{ (as well as nonuniformly for tensors)}. \old{38}{It optimizes, in a loose sense,
continuously between manifolds of different ranks.}\new{38}{While other methods in fact keep distance from the border of
one manifold of a fixed rank, we aim to optimize, in a certain sense, continuously between manifolds of different ranks.}\\
\textit{Assuming} local integrability of $f_{\mathcal{M}}$ (as defined in \eqref{fM}), we obtain that the variational function 
\begin{align} 
  \label{fastM} f^{\ast}_{\mathcal{M}}(A) & := \frac{1}{|\mathbb{V}_{A,\omega}|} \int_{\mathbb{V}_{A,\omega}} f_{\mathcal{M}}(H) \intd  H \\
  \nonumber \mathbb{V}_{A,\omega} & := \{H \in image(\tau_{\widetilde{r}}) \mid \|H-A\|_{\new{6}{F}}\leq \omega\}
\end{align}
is continuous within $image(\tau_{\widetilde{r}})$, where $\|\cdot\|_F$ is the Frobenius norm and $\widetilde{r}$ may be considered an upper bound to the rank (cf. Figure \ref{fancy}).
\begin{figure}
  \centering
  \ifuseprecompiled
\includegraphics[width=0.98\textwidth]{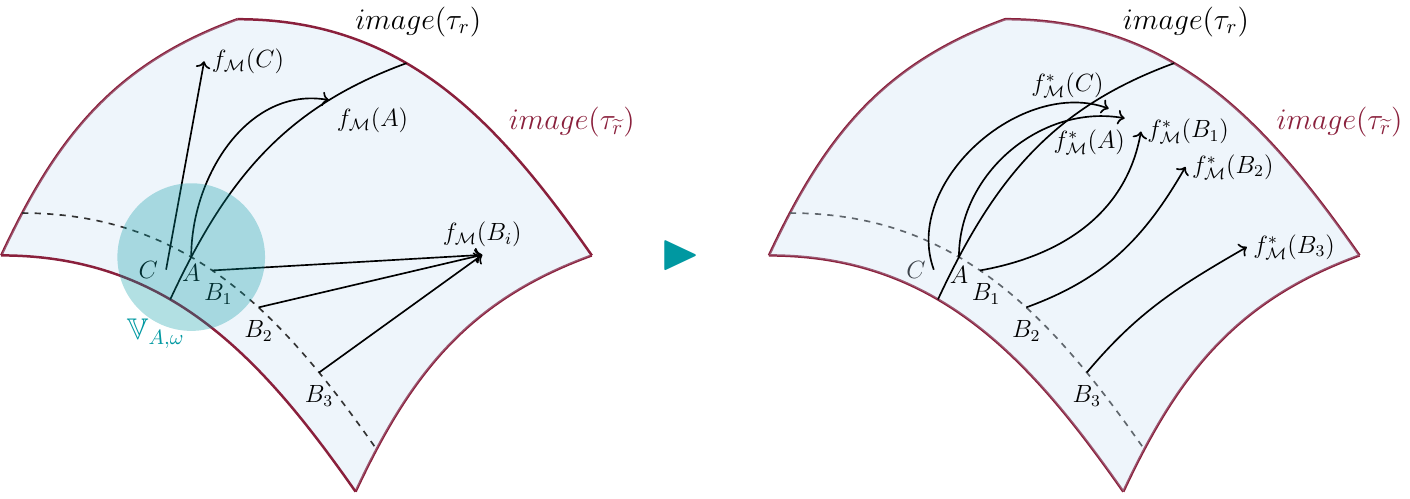}
\else
\tikzsetnextfilename{manifolds}
\resizebox{\columnwidth}{!}{
\input{tikz_base_files/manifolds/manifolds.tex}
}
\fi
\caption{\label{fancy} The schematic display of the unstable function $f_\mathcal{M}$ (left) and the variational, stable $f^{\ast}_\mathcal{M}$ (right). 
In both pictures, the image of $\tau_r$ is depicted as black curve contained in the image of $\tau_{\widetilde{r}}$ shown
as blue area (with magenta boundary). $A$ is a rank $r$ \old{39}{tensor}\new{39}{element}, while $C$ and each $B_i$ has rank $\widetilde{r}$. 
Left: Regardless of their distance to $A$, the tensors $B_1, B_2$ and $B_3$ (and any other point of the dotted line except the lower rank \old{39}{tensor}\new{39}{element} $A$)
are mapped to the same point $f_{\mathcal{M}}(B_i)$. Likewise, $C$ is, although as close to $A$ as $B_1$, mapped
to a completely different point. The teal circle exemplarily shows one possible range of averaging at the point $A$.
Right: If \old{39}{a tensor}\new{39}{an element} (such as $B_1$ and $C$) is close to $A$, then this also holds for their function values.
However, \old{39}{the }$f^{\ast}_{\mathcal{M}}(A)$ is not rank $r$ anymore 
(in fact, the image of $f^{\ast}_{\mathcal{M}}$ is generally not even rank $\widetilde{r}$).}
\end{figure}
However, this function does not preserve low rank structure\new{38}{, apart from appearing to be to complicated to evaluate,} and therefore 
we cannot find a method $\mathcal{M}^{\ast}$ for which $f_{\mathcal{M}}^{\ast} = f_{\mathcal{M}^{\ast}}$. 
Still, all simplifications which we will make remain subject to this motivation.
\subsection{Investigations into the Connection between Averaging and Stability}\label{sec:conect}
\old{7}{Consider instead a scenario in which we limit the perturbation that the left singular vectors $U$
receive due to the variation of $A$ to only one component (as limit case of $\sigma_1 \gg \sigma_2 \approx \omega$). 
From this, we will observe important consequences.}%
\new{7}{Although the following examples are heavily narrowed down, we observe which kind of simplifications we may apply in order to obtain a feasible, but still stable method (cf. Section \ref{sec:stabmatrixcomp}). \BCn{h1_}{h_}\BCn{g1}{g}
First, we consider a scenario in which we limit the perturbation that the left singular vectors $U$
receive due to the variation of $A$ to only one component as it is approximately the case if $\sigma_1 \gg \sigma_2 \approx \omega$.}
\begin{lemma}[Variational low rank matrix approximation]\label{coloramaap}
  Let $\mathcal{M}$ be defined by \eqref{MY} and $P = \Ind$ (full sampling).
  Let further $A = U \Sigma V^T \in \R^{n \times m}$ be of rank two, given by its SVD components $U = (u_1 \mid u_2)$, $\Sigma = diag(\sigma_1,\sigma_2)$ and $V$ as well as $M \in \R^{m \times m}$ arbitrary
  and $0 < \omega < \sqrt{2}\sigma_2$. Then
  \begin{align}\label{conv}
    \nonumber & \widehat{f}_{\mathcal{M}}(A) := \frac{1}{|V_{\omega}|} \int_{V_{\omega}} f_{\mathcal{M}} \left( (u_1 \mid u_2 + \Delta u_2) \Sigma V^T \right) \intd  \Delta u_2 \\
    &  = \underbrace{u_1 u_1^T M}_{\mbox{optimization}} \ + \ \underbrace{(1-\alpha_{\omega})^2  u_2 u_2^T M}_{\mbox{regularization}} \ + \ \underbrace{\frac{2 \alpha_{\omega} - \alpha_{\omega}^2}{m-2} (I_m - u_1 u_1^T - u_2 u_2^T) M}_{\mbox{replenishment}}\\
    \nonumber &  V_{\omega} := \left\{ \Delta u_2 \mid \|(u_1 \mid u_2 + \Delta u_2) \Sigma V^T - A\|_{\new{6}{F}} = \omega, \ (u_1 \mid u_2 + \Delta u_2) \mbox{ has orthonormal columns} \right\}
  \end{align}
  for $\alpha_{\omega} = \frac{\omega^2}{2 \sigma_2^2}$, such that $\alpha_{\omega} \rightarrow 1$ if $\omega \rightarrow \sqrt{2}\sigma_2$. 
  Alternatively, considering complete uncertainty concerning the second singular vector, we obtain
  \begin{align*}
    & \frac{1}{|V_{\omega}|} \int_{V_{\omega}}  f_{\mathcal{M}} \left( (u_1 \mid \Delta u_2) \Sigma V^T \right) \intd  \Delta u_2 
    = u_1 u_1^T M + \frac{1}{m-1} (I_m - u_1 u_1^T) M, \\
    & \mbox{where here} \quad V_{\omega} := \{ \Delta u_2 \mid (u_1 \mid \Delta u_2) \mbox{ has orthonormal columns} \}.
  \end{align*}
\end{lemma}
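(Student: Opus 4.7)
The plan is to exploit the fact that, under full sampling $P=\Ind$ and column-orthonormality of the perturbed left factor, the ALS micro-step $f_{\mathcal{M}}$ admits a closed form as an orthogonal projection, and then to reduce the averaging to a standard spherical integral on a codimension-two subspace.

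\textbf{Step 1: closed form of $f_{\mathcal{M}}$.} For any $\Delta u_2$ with $X:=(u_1\mid u_2+\Delta u_2)$ column-orthogonal, the normal equations for $\min_{\widetilde Y}\|X\widetilde Y^T-M\|_F^2$ give $\widetilde Y=M^TX$ because $X^TX=I_2$. Thus, by representation independence of $\mathcal M$,
\[
f_{\mathcal M}\bigl((u_1\mid u_2+\Delta u_2)\Sigma V^T\bigr)=XX^TM=u_1u_1^TM+(u_2+\Delta u_2)(u_2+\Delta u_2)^TM.
\]
Notice that $\Sigma$ and $V$ have dropped out entirely; the averaging therefore only acts on the outer product $(u_2+\Delta u_2)(u_2+\Delta u_2)^T$.

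\textbf{Step 2: parameterising $V_{\omega}$.} The constraint $\|(u_1\mid u_2+\Delta u_2)\Sigma V^T-A\|_F=\omega$ reduces, using $V^TV=I$, to $\sigma_2^2\|\Delta u_2\|^2=\omega^2$, i.e.\ $\|\Delta u_2\|^2=2\alpha_\omega$. Column-orthogonality forces $u_1^T\Delta u_2=0$ and $2u_2^T\Delta u_2+\|\Delta u_2\|^2=0$, hence $u_2^T\Delta u_2=-\alpha_\omega$. Writing $\Delta u_2=-\alpha_\omega u_2+w$ with $w\perp\mathrm{span}\{u_1,u_2\}$, the remaining constraint is $\|w\|^2=2\alpha_\omega-\alpha_\omega^2$, which (for $0<\omega<\sqrt{2}\sigma_2$) is positive. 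Thus $V_\omega$ is, up to the parameterisation $w\mapsto\Delta u_2$, a Euclidean sphere of radius $\sqrt{2\alpha_\omega-\alpha_\omega^2}$ in the $(m-2)$-dimensional subspace $\{u_1,u_2\}^\perp$.

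\textbf{Step 3: performing the averaging.} Inserting $u_2+\Delta u_2=(1-\alpha_\omega)u_2+w$,
\[
(u_2+\Delta u_2)(u_2+\Delta u_2)^T=(1-\alpha_\omega)^2u_2u_2^T+(1-\alpha_\omega)\bigl(u_2w^T+wu_2^T\bigr)+ww^T.
\]
By the symmetry $w\mapsto -w$ of the sphere, the cross terms average to zero. The standard spherical identity on $\{u_1,u_2\}^\perp$ yields
\[
\frac{1}{|V_\omega|}\int_{V_\omega}ww^T\intd w=\frac{2\alpha_\omega-\alpha_\omega^2}{m-2}\bigl(I_m-u_1u_1^T-u_2u_2^T\bigr),
\]
since for an isotropic uniform measure on a radius-$r$ sphere in a $k$-dimensional subspace with projector $P^\perp$, one has $\mathbb E[ww^T]=(r^2/k)P^\perp$. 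Collecting terms and multiplying by $M$ from the right gives exactly the claimed decomposition into optimization, regularization and replenishment parts.

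\textbf{Step 4: the fully uncertain case.} When $u_2$ is replaced by an arbitrary unit vector $\Delta u_2\perp u_1$, the same ALS formula from Step~1 gives $f_{\mathcal M}=u_1u_1^TM+\Delta u_2\Delta u_2^TM$, and $V_\omega$ is the unit sphere of the $(m-1)$-dimensional subspace $u_1^\perp$. The identity $\mathbb E[\Delta u_2\Delta u_2^T]=\tfrac{1}{m-1}(I_m-u_1u_1^T)$ immediately produces the second formula. The main (minor) obstacle throughout is bookkeeping: identifying the correct dimension of the integration domain so that the factor $1/(m-2)$ (resp.\ $1/(m-1)$) comes out right; once Steps~1--2 are in place, everything reduces to the elementary spherical second-moment identity.
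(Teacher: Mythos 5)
Your proposal is correct and follows essentially the same route as the paper's proof: parametrize $V_\omega$ via $\Delta u_2=-\alpha_\omega u_2+w$ with $w\perp\mathrm{span}\{u_1,u_2\}$ and $\|w\|^2=2\alpha_\omega-\alpha_\omega^2$, kill the cross terms by the symmetry $w\mapsto -w$, and evaluate $\int ww^T$ by the spherical second-moment identity (which the paper isolates as Lemma \ref{inovalva}). The only cosmetic difference is that you make the closed form $f_{\mathcal M}=XX^TM$ explicit at the outset, which the paper states without derivation.
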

\begin{proof}
  We parameterize $V_{\omega}$. First, $\omega = \|(u_1 \mid u_2 + \Delta u_2) \Sigma V^T - A\|_{\new{6}{F}} = \|\Delta u_2\|_{\new{6}{F}} \ \sigma_2$ and hence $\|\Delta u_2\|_{\new{6}{F}} = \frac{\omega}{\sigma_2}$.
  By orthogonality conditions, we obtain $\Delta u_2 = - \alpha_{\omega} u_2 + \Delta u^{\perp}_2$ with $\Delta u^{\perp}_2 \perp \mbox{range}(U)$ for a
  fixed $\alpha_{\omega} = \frac{\omega^2}{2 \sigma_2^2}$. Hence, $V_{\omega}$ is an $(m-3)$-sphere of radius 
  $\beta_{\omega} = \sqrt{\frac{\omega^2}{\sigma_2^2} - \alpha_{\omega}^2}$, that is $\beta_{\omega} S^{m-2}$. The update for each instance of $\Delta u^{\perp}_2$ is given by
  \begin{align*} 
    f_\mathcal{M}( (u_1 \mid u_2 + \Delta u_2) \Sigma V^T ) = (u_1 \mid u_2 + \Delta u_2) (u_1 \mid u_2 + \Delta u_2)^T M.
  \end{align*}
  We integrate this over $V_{\omega}$ and obtain
  \begin{align*}
    \int_{V_{\omega}} f_\mathcal{M} = \int_{V_{\omega}} u_1 u_1^T M + \int_{V_{\omega}} (1-\alpha_{\omega})^2 u_2 u_2^T M + \int_{V_{\omega}} \Delta u^{\perp}_2 {\Delta u^{\perp}_2}^T M
  \end{align*}
  since all integrals of summands which contain $\Delta u^{\perp}_2$ exactly once vanish due to symmetry.
  We can simplify the last summand with Lemma \ref{inovalva} to 
  \begin{align*}
    \int_{V_{\omega}} \Delta u^{\perp}_2 {\Delta u^{\perp}_2}^T M = \int_{\beta_{\omega} S^{m-2}} (Hx) (Hx)^T M \intd x = H H^T \frac{2 \alpha_{\omega} - \alpha_{\omega}^2}{m-2} |V_{\omega}| M
  \end{align*}
  for a linear, orthonormal map $H$ that maps $x \in \beta_{\omega} S^{m-2}$ to $\Delta u^{\perp}_2$, that is, embeds it into $\R^m$.
  One can then conclude that $H H^T = I_m - u_1 u_1^T - u_2 u_2^T$, since the rank of $H$ is $m-2$ and $range(H) \perp range(U)$.
  The division by $|V_{\omega}|$ then finishes the first part. The second part is analogous.
\end{proof}
We can observe that, in this case, 
choosing $\omega$ close to $\sigma_2$, or in that sense a small $\sigma_2$, will \textit{filter} out influence of $u_2$. 
This is indeed in agreement to the update which the rank $1$ best-approximation to $A$ would yield\footnote{Note that we fixed $\|\Delta u_2\|_{\new{6}{F}} = \omega$ for simplicity as well as that for $\omega > \sqrt{2} \sigma$, Example \ref{coloramaap} does not make sense. 
Allowing perturbations \textit{up to} a magnitude $\omega$ will prohibit that the influence of
$u_2$ vanishes completely, hence $u_2$ is never actually truncated.}. 
More importantly, the result $\widehat{f}_{\mathcal{M}}(A)$ in \eqref{conv} is not low rank, yet
close to the rank $2$ approximation $U (u_1^T M \mid (1-\alpha_{\omega})^2 u_2^T M)$, in which the first component
$U$ has remained the same.
While the variational model as in \eqref{fastM} remains the basic idea, it is too complicated
to be used for the derivation of a stable method $\mathcal{M}^{\ast}$. We instead consider
a slightly modified approach (which will be used in the following Sections \ref{sec:stabmatrixcomp} and \ref{sec:moofthsicorefu}).
\begin{lemma}[Low rank matrix approximation using a variational residual function]\label{averobj}
  In the situation of Lemma \ref{coloramaap}, we have
  \begin{align}\label{unc}
    \argmin_{\widetilde{V}} \ \frac{1}{|V_{\omega}|} \int_{V_\omega} \|(u_1 \mid u_2 + \Delta u_2) \widetilde{V} - M\|_{\new{6}{F}}^2 \intd \Delta u_2 = (u_1^T M \mid (1 - \alpha_{\omega}) u_2^T M)
  \end{align}
\end{lemma}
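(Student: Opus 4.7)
The proof will largely recycle the parametrization of $V_\omega$ already established in Lemma \ref{coloramaap} and reduce the claim to minimizing an ordinary convex quadratic. I would write $\widetilde{V}$ as the matrix whose two rows are $v_1^T$ and $v_2^T$ with $v_1, v_2 \in \R^m$, so that $(u_1 \mid u_2 + \Delta u_2)\widetilde{V} = u_1 v_1^T + (u_2 + \Delta u_2) v_2^T$. Expanding the squared Frobenius norm, I would exploit the fact that the definition of $V_\omega$ enforces $(u_1 \mid u_2 + \Delta u_2)$ to have orthonormal columns, hence $u_1 \perp (u_2 + \Delta u_2)$ and $\|u_2 + \Delta u_2\| = 1$ for every $\Delta u_2 \in V_\omega$. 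This makes the Frobenius cross term $\langle u_1 v_1^T, (u_2 + \Delta u_2) v_2^T\rangle_F$ vanish and collapses the two diagonal squared-norm contributions to $\|v_1\|^2$ and $\|v_2\|^2$. The integrand therefore reduces to $\|v_1\|^2 + \|v_2\|^2 + \|M\|_F^2 - 2 (u_1^T M) v_1 - 2 (u_2 + \Delta u_2)^T M v_2$, in which only the last term depends on the integration variable.

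Next, I would integrate this remaining term over $V_\omega$. Using the decomposition $\Delta u_2 = -\alpha_\omega u_2 + \Delta u_2^\perp$ recorded in the proof of Lemma \ref{coloramaap}, with $\Delta u_2^\perp$ distributed uniformly over the sphere $\beta_\omega S^{m-2}$ inside the orthogonal complement of $\mathrm{range}(U)$, symmetry gives $\int_{V_\omega} \Delta u_2^\perp \intd \Delta u_2^\perp = 0$. Consequently the mean of $u_2 + \Delta u_2$ over $V_\omega$ equals $(1-\alpha_\omega) u_2$, and after dividing by $|V_\omega|$ the averaged objective reads $\|v_1\|^2 + \|v_2\|^2 + \|M\|_F^2 - 2 (u_1^T M) v_1 - 2 (1-\alpha_\omega) (u_2^T M) v_2$.

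This is a strictly convex quadratic in $(v_1, v_2)$ with decoupled variables, so setting its gradient to zero yields the unique minimizer $v_1 = M^T u_1$ and $v_2 = (1-\alpha_\omega) M^T u_2$, i.e., $\widetilde{V} = \bigl(u_1^T M \mid (1-\alpha_\omega) u_2^T M\bigr)$, which is exactly the asserted formula. The only mildly delicate point is the bookkeeping around the orthogonality constraint: one must verify that $\|u_2 + \Delta u_2\| = 1$ holds for all $\Delta u_2 \in V_\omega$, which follows from combining $\|\Delta u_2\|^2 = 2\alpha_\omega \sigma_2^2 / \sigma_2^2 = 2\alpha_\omega$ with $u_2^T \Delta u_2 = -\alpha_\omega$. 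Everything beyond this is routine, since the sphere-average used in Lemma \ref{coloramaap} has already handled the harder direction of the computation.
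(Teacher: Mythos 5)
Your proposal is correct and follows essentially the same route as the paper: both reduce the averaged least squares problem, via the column orthonormality of $(u_1 \mid u_2 + \Delta u_2)$ enforced by $V_\omega$, to averaging $(u_1 \mid u_2 + \Delta u_2)^T M$ over $V_\omega$, and then use the decomposition $\Delta u_2 = -\alpha_\omega u_2 + \Delta u_2^\perp$ together with the spherical symmetry of $\Delta u_2^\perp$ to obtain $(u_1^T M \mid (1-\alpha_\omega) u_2^T M)$. The paper merely states the averaged normal equations directly where you expand the quadratic explicitly; the content is identical.
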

\begin{proof}
Let $\widetilde{V}$ be the corresponding minimizer.
  With the same derivation as in Lemma \ref{coloramaap}, we obtain
  {\setlength{\belowdisplayshortskip}{-2pt}\setlength{\belowdisplayskip}{-2pt}\begin{align*}
    |V_{\omega}| \widetilde{V} & = \int_{V_\omega} (u_1 \mid u_2 + \Delta u_2)^T M \intd  \Delta u_2 \\
    & = \left( u_1^T M |V_{\omega}| \mid u_2^T M |V_{\omega}| + \int_{V_\omega} -\alpha_{\omega} u_2^T M + \Delta u^{\perp}_2 M \mbox{d} \Delta u_2 \right) \\
    & = |V_{\omega}| \left( u_1^T M \mid (1 - \alpha_{\omega}) u_2^T M \right).
  \end{align*}}
\end{proof}
Comparing this to the rank $2$ approximation of the previous result \eqref{conv}, we observe that solely
$(1 - \alpha_{\omega})^2$ has been replaced by $1 - \alpha_{\omega}$. For our purpose, these terms are sufficiently similar for small $\alpha_{\omega} \geq 0$.\\
\begin{remark}[Replenishment and lower limit]\label{repllowerlimit}
\old{25}{The replenishment term in \eqref{conv} is crucial. Without this term, some parts of the iterates will simply converge to zero for fixed $\omega$ (cf. Appendix B).
We later bypass this problem by setting a lower limit to all occurring singular values.}
\new{25}{The so called replenishment term in \eqref{conv} points at an important aspect which we analyze in Section \ref{sec:filter}.
We later bypass related problems through an additional manipulation of singular values, the intensity of which is proportional to the current residual.}
\end{remark}
%
We here refer to a Matlab implementation of a (superficially random) Monte Carlo approach
to the unsimplified variational microstep $f^{\ast}_{\mathcal{M}}$ as in \eqref{fastM} for
matrix completion\newwo{, which is linked on the personal webpage of the author Sebastian Kr\"amer\footnote{by the time the paper is written, the address is {\tt www.igpm.rwth-aachen.de/team/kraemer}}}.
Likewise, an implementation of the final algorithm \textit{SALSA} (Algorithms \ref{CALS_alg_matrix} and \ref{CALS_alg}),
which is developed from the idea in Lemma \ref{averobj}, can be found for the matrix case
as well as for the tensor case.
\section{Stable Alternating Least Squares Microsteps for Matrix Completion}\label{sec:stabmatrixcomp}
%
In this section, we adapt the target function of each microstep $\mathcal{M}$ in order to
obtain a stable method $\mathcal{M}^{\ast}$.
When performing ALS, instead of just one specific iteration point \newwo{$A \in \mathcal{T}_r$}, the results in Section \ref{sec:conect} suggest to instead
%
consider a set $V_{\omega}(A)$ of variations, or uncertainty, $\Delta A$ along the manifold $\mathcal{T}_r$ of rank $r$ matrices: 
\[ V_{\omega}(A) := \{ \Delta A \mid A + \Delta A \in \mathcal{T}_r, \ \|\Delta A\|_{\new{6}{F}} \ \leq \omega\}, \quad r = \mbox{rank}(A) \]
The initial idea is slightly similar to gradient sampling (e.g. \cite{Bu05_ARo}), but is then pursued differently.
Let $A = \tau_r(X,Y)$ and $(\Delta X, \Delta Y)$ such that $A + \Delta A = \tau_r(X + \omega \Delta X,Y + \omega \Delta Y)$.
Then
\begin{align}\nonumber
  \|\Delta A\|^2_F & = \|(X + \omega \Delta X) (Y + \omega \Delta Y) - X Y \|^2_F \\
  & = \|\omega (\Delta X Y + X \Delta Y) \|^2_F  + \left(\mathcal{O}(\omega^2)\right)^2 \label{varresder}
\end{align}
The term $\| \Delta X Y + X \Delta Y \|^2_F$ can be approximated, assuming 
the angles between the three summands are small, 
by $\| \Delta X Y \|^2_F + \| X \Delta Y \|^2_F$. This and Lemma \ref{averobj} then motivate the following definition.
\begin{definition}[Variational residual function]\label{sicorefuinmosmo2matrix}
  Let $\omega \geq 0$, $M$ the target matrix, $P$ the sampling set and $A = X Y$ the
  current iterate. 
  We define the variational residual function $C := C_{M,P,X,Y}: \mathcal{D}_r \rightarrow \R$ by
  \begin{align} \label{corfmatrix}
    C(\widetilde{X},\widetilde{Y}) & := \int_{\mathbb{V}_{\omega}(X,Y)} \| (\widetilde{X} + \Delta X) (\widetilde{Y} + \Delta Y) - M\|^2_{P} \intd   \Delta X \intd  \Delta Y,\\
%
  \mathbb{V}_{\omega}(X,Y) & := \{(\Delta X,\Delta Y) \mid \|\Delta X Y\|^2_F + \|X \Delta Y\|^2_F \leq \omega^2 \}. \nonumber
  \end{align}
\end{definition}

\subsection{Minimizer of the Variational Residual Function for Matrices}
We define our modified methods as
\begin{align} \label{methodX} (\mathcal{M}^{(1)})^{\ast}(X, \ Y) 
  & := (\argmin_{\widetilde{X}}\ C(\widetilde{X},Y), \ Y) \\
  \label{methodY} (\mathcal{M}^{(2)})^{\ast}(X, \ Y) 
  & := (X, \ \argmin_{\widetilde{Y}}\ C(X,\widetilde{Y}))
\end{align}
with $C =C_{M,P,X,Y}$ as in \eqref{corfmatrix}. 
It should further be noted that $\mathbb{V}_{\omega}$ does not depend on the unknown $\widetilde{X}$, $\widetilde{Y}$, respectively, but on the current iterate.
%
We will later see that the minimizers are unique, but for formality we again use the minimization of Frobenius norm of the
iterate as secondary criterion.
\begin{lemma}
 The two methods $(\mathcal{M}^{(1)})^{\ast}$ \eqref{methodX} and $(\mathcal{M}^{(1)})^{\ast}$ \eqref{methodY} are representation independent.
\end{lemma}
\begin{proof}
 We later prove this for the generalized tensor case (cf. Lemma \ref{in}).
\end{proof}
%
Here, and throughout the remainder of the paper, we use the following, convenient notations, since we often have to reshape, restrict or project objects.
\begin{definition}[Restrictions]\label{restrictions}\BCn{o2}{o}
  For any object $A \in \R^{I}$ and index set $S \subset I$, we 
  use $A|_S \in \R^S$ as restriction.
  For a matrix $M$, let $M_{:,i}$ be its $i$-th column and $M_{i,:}$ be its $i$-th row.
  Furthermore, whenever we apply a restriction to an object or reshape it, we also use the same notation
 to correspondingly modify index sets (cf. Theorem \ref{miofthalsrerefufoma}).
\end{definition}
\begin{lemma}[Integral over all variations]\label{inovalva}
  Let $n,m \in \mathbb{N}$, $\omega \geq 0$ and $H \in \R^{n \times n}$ be a matrix as well as 
  \[ V_{\omega}^{(n,m)} := \{ X \in \R^{n \times m} \mid \|X\|_F = \omega\}. \]
  Then
  \[ \int_{V_{\omega}^{(n,m)}} X^T H X \intd X =  \frac{\omega^2|V_{\omega}^{(n,m)}|}{nm} \mbox{tr\newwo{ace}}(H)  I_m, \quad |V_{\omega}^{(n,m)}| := \int_{V_{\omega}^{(n,m)}} 1. \]
\end{lemma}%
\begin{proof}\BCn{g1_}{g_}Let $Y$ be the result of the above integral. Then
  \begin{align*}
   Y_{ij} & = \mathrm{trace}(Y_{ij}) = \int_{V_{\omega}^{(n,m)}} \mathrm{trace}(X_{:,i}^T H X_{:,j}) \intd X 
    = \mathrm{trace}(H \int_{V_{\omega}^{(n,m)}} X_{:,j} X_{:,i}^T) \intd X. 
  \end{align*}
  Due to symmetry, for some $\alpha \in \R$, we have 
  \begin{align*}
   \int_{V_{\omega}^{(n,m)}} \mathrm{vec}(X) \mathrm{vec}(X)^T  \intd X & = \alpha I_{nm}, \quad
   \int_{V_{\omega}^{(n,m)}} \mathrm{vec}(X)^T \mathrm{vec}(X) \intd X = \omega^2 |V_{\omega}^{(n,m)}|.
  \end{align*}
  Since the second term is the trace of the first one, it follows that $\alpha = \omega^2 |V_{\omega}^{(n,m)}|/(nm)$. We can hence simplify 
  \begin{align*}
   Y_{ij} = \begin{cases}
             \omega^2 |V_{\omega}^{(n,m)}|/(nm) \ \mathrm{trace}(H) & \mbox{ if } i = j, \\
             0 & \mbox{ otherwise }, \\
            \end{cases}
  \end{align*}
  which is the to be proven statement.
\end{proof}
We now derive the minimizer of the variational residual function for matrices \eqref{corfmatrix}.
One can use an SVD of the current iterate $A = U \Sigma V^T$ for simplification. 
In this case, $\mathbb{V}_{\omega}$ as in Definition \ref{sicorefuinmosmo2matrix} takes the easier forms
\begin{align*} \mathbb{V}_{\omega}(U \ \Sigma, \ V^T) & = \{(\Delta X,\Delta Y) \mid 
  \|\Delta X V^T\|^2_F + \|U \Sigma \Delta Y\|^2_F \leq \omega^2 \} \\
  & = \{(\Delta X,\Delta Y) \mid \|\Delta X\|^2_F + \|\Sigma \Delta Y\|^2_F \leq \omega^2 \}, \\
  \mathbb{V}_{\omega}(U, \ \Sigma \ V^T) & = \{(\Delta X,\Delta Y) \mid 
  \|\Delta X \Sigma\|^2_F + \|\Delta Y\|^2_F \leq \omega^2 \}.
\end{align*}

\begin{theorem}[Minimizer of the ALS variational residual function for matrices]\label{miofthalsrerefufoma}
  \new{8}{Let $A \in \R^{n \times m}$ be the current iterate with $r = \mathrm{rank}(A)$ and let $U \Sigma V^T$ be a truncated SVD, $U \in \R^{n \times r}$, $\Sigma \in \R^{r \times r}$, $V^T \in \R^{r \times m}$, of $A$.
  Let further $C$ be the variational residual function as in \eqref{corfmatrix}.}\BCn{h2}{h}%
  \old{8}{Let $U \Sigma V^T$ be the SVD of a matrix $A \in \R^{n \times m}$ and $C$ the variational residual function as in \eqref{corfmatrix}.}
  The minimizer $X^+$ of $\widetilde{X} \mapsto C_{M,P,U \Sigma, V^T}(\widetilde{X},V^T)$  is given by
  \begin{align*}
  X^+_{i,:} = \argmin_{\widetilde{X}_{i,:}} \ \underbrace{\|\widetilde{X}_{i,:} V^T - M_{i,:} \|_{P_{i,:}}^2}_{\mbox{standard ALS}}  & + \underbrace{\frac{|{P_{i,:}}|}{m} \ \omega^2 \ \zeta_2 \ \|\widetilde{X}_{i,:} \Sigma^{-1}\|_F^2}_{\mbox{regularization}},
  \end{align*}
  where $P_{i,:} := \{ p_2^{(k)} \mid p_1^{(k)} = i, \ k = 1,\ldots,|P|\}$ is the corresponding part of the index set $P$.
    The minimizer $Y^+$ of $\widetilde{Y} \mapsto C_{M,P,U, \Sigma V^T}(U,\widetilde{Y})$ is given by
  \begin{align*}
    Y^+_{:,j} = \argmin_{\widetilde{Y}_{:,j}} \ \ \underbrace{\|U \ \widetilde{Y}_{:,j} - M_{:,j}\|^2_{P_{:,j}}}_{\mbox{standard ALS}} 
    + \underbrace{\frac{|P_{:,j}|}{n} \ \omega^2 \ \zeta_1 \ \|\Sigma^{-1} \ \widetilde{Y}_{:,j}\|^2_F}_{\mbox{regularization}},
  \end{align*}
  where $P_{:,j} := \{ p_1^{(k)} \mid p_2^{(k)} = j, \ k = 1,\ldots,|P|\}$.
  The constants $\zeta_{1}$ and $\zeta_2$ only depend on the proportions of the representation and sampling set (cf. Remark \ref{constants_matrices}).
\end{theorem} 
\moved{30}{
The factors $\frac{|{P_{i,:}}|}{m}$ and $\frac{|P_{:,j}|}{n}$ normalize the penalty terms 
to the particular magnitudes of the corresponding shares of the sampling set $P$ and hence the standard ALS part. The factors $\zeta_1,\zeta_2 \in (0,1)$ are in turn
independent of $i,j$, respectively. The ratio of both terms equals the ratio of the mode sizes $n,m$.
The reason for the latter scaling will become apparent in the tensor case (see comments to Theorem \ref{miofthalsrerefu}).}
\begin{proof}\BCn{f1_}{f_}%
\old{30}{The proof is rather technical and can be found in Appendix A.}
\moved{30}{
We search for $Y^+ := \argmin_{\widetilde{Y}} C_{M, P, U, \Sigma V^T}(U,\widetilde{Y})$
(the counterpart for $X^+$ is analogous).
Substituting
\[ 
 (\Delta X,\Delta Y) \rightarrow (\Delta X \Sigma^{-1}, \Delta Y)
\]
we can (up to a constant Jacobi determinant) restate $C$ as
\begin{align}
C_{M, P, U, \Sigma V^T}(U,\widetilde{Y}) & \ \propto \int_{\mathbb{V}_{\omega}} \|
(U +  \Delta X \Sigma^{-1}) (\widetilde{Y} + \Delta Y)  
\nonumber  - M\|^2_{P} \intd \Delta X \intd \Delta Y, \\
\mathbb{V}_{\omega} & = \{(\Delta X,\Delta Y) \mid 
\|\Delta X \|^2 + \|\Delta Y \|^2\leq \omega^2 \}.
\end{align}
Each of the independent columns in the minimizer is restated as
\begin{align}
 Y^+_{:,j} & = \argmin_{\widetilde{Y}_{:,j}} \int_{\mathbb{V}_\omega}
 \| (U +  \Delta X \Sigma^{-1}) (\widetilde{Y}_{:,j} +  \Delta \widetilde{Y}_{:,j})  - M_{:,j} \|^2_{P_{:,j}} \intd  \Delta X \intd \Delta Y. 
\end{align}
Let $j$ be arbitrary but fixed from now on.
For any vector $x$, it is $\|x\|_{\mbox{vec}(P_{:,j})} = \|H(j) x\|_{F} = x^T H(j) x$ for a diagonal, 
square matrix $H(j) \in \R^{n \times n}$ 
with \\ $H(j)_{(s),(s)} = \delta_{s \in P_{:,j}}$ (hence $H(j)^2 = H(j)$).
Using the normal equation, we obtain $Y^+_{:,j} = W^{-1} b$, where
\begin{align*} 
W = & \int_{\mathbb{V}_\omega} (U +  \Delta X \Sigma^{-1})^T \ H(j) \ (U +  \Delta X \Sigma^{-1}) \intd \Delta X \intd \Delta Y
\end{align*}
and
\begin{align*} 
b = & \int_{\mathbb{V}_\omega} (U +  \Delta X \Sigma^{-1})^T \ H(j) \ (M_{:,j} -  \Delta \widetilde{Y}_{:,j}) \intd \Delta X \intd \Delta Y.
\end{align*}
In both $W$ and $b$, any perturbation that appears only one-sided in the expanded product vanishes due to symmetry of $\mathbb{V}_\omega$. 
Thus $b = |\mathbb{V}_\omega| \ U_{P_{:,j},:}^T \ M_{P_{:,j},j}$
and
\begin{align*} W = & \int_{\mathbb{V}_\omega} U^T \ H(j) \ U \intd \Delta X \intd \Delta Y \\
 + & \int_{\mathbb{V}_\omega} (\Delta X \Sigma^{-1})^T \ H(j) \ (\Delta X \Sigma^{-1}) \intd \Delta X \intd \Delta Y
\end{align*}
Now, let $\ell = \#_{X} = nr,\ k = \#_{Y} = rm$. Since $\mathbb{V}$ is a version of the $(\ell + k)$-sphere,
we can use the following integration formula: 
For any $n,m,k \in \mathbb{N}$ let $f: \R^{n+m} \rightarrow \R^k$ be a sufficiently smooth function and $S^{v-1}_{\omega}$ be the $v$-sphere of radius $\omega$.
 Then
 \[ \int_{S^{n+m-1}_{\omega}} f(x_n,x_m) \intd x = \int_0^{\pi/2} \omega \int_{S^{n-1}_{\omega \sin(u)}} \int_{S^{m-1}_{\omega \cos(u)}} f(x_n,x_m) \intd  x_m \intd  x_n \intd  u. \]
For a function $f$ we then obtain
\begin{align*}
 \int_{\mathbb{V}} f\intd \Delta X \intd \Delta Y & = 
 \int_{\lambda = 0}^{\omega} \int_{S_{\lambda}^{\ell + k-1}} f\intd \Delta X \intd \Delta Y \intd \lambda \\
 & = \int_{\lambda = 0}^{\omega} \lambda \int_{g = 0}^{\pi/2} \int_{S_{\lambda \sin(g)}^{k -1}} \int_{S_{\lambda \cos(g)}^{\ell -1}} f\intd \Delta X \intd \Delta Y \intd g \intd \lambda
\end{align*}
When $f$ is independent of $\Delta Y$, this simplifies to
 \begin{align*} 
 = \int_{\lambda = 0}^{\omega} \lambda \int_{g = 0}^{\pi/2} |S_{\lambda \sin(g)}^{k -1}| \int_{S_{\lambda \cos(g)}^{\ell-1}} f\intd \Delta X \intd g \intd \lambda
\end{align*}
We further use the identity 
\[ \int_0^{\pi/2} \cos(x)^p \sin(x)^q \intd x = \frac{\Gamma((p+1)/2) \ \Gamma((q+1)/2)}{2 \Gamma((p+q+2)/2)} =: \nu(p,q) \]
We apply these and Lemma \ref{inovalva} for different $f_0 = U^T \ H(j) \ U$ ($\delta = 0$) and $f_1 = (\Delta X \Sigma^{-1})^T \ H(j) \ (\Delta X \Sigma^{-1})$ ($\delta = 1$).
For the summands $W(0) + W(1) = W$ this then yields
\begin{align*} W(\delta) = &
\int_{\lambda = 0}^{\omega} \lambda \int_{g=0}^{\pi/2} \frac{2 \pi^{k/2} (\lambda \sin(g))^{k-1}}{\Gamma(k/2)}
(\lambda^2 \cos(g))^{\delta} \frac{2 \pi^{\ell/2} (\lambda \cos(g))^{\ell-1}}{\Gamma(\ell/2)} C_H(\delta) \\
& = c \frac{\omega^{k+\ell+2\delta}}{k+\ell+2\delta} \ \nu(\ell-1+2\delta,k-1) C_H(\delta)
\end{align*}
for $c = \frac{4 \pi^{(k+\ell)/2}}{\Gamma(\ell/2)\Gamma(k/2)}$. The constant matrices $C_H$ are given by
\begin{align*}
 C_H(0) & =  \widetilde{C}_H(0) = U_{P_{:,j},:}^T \ _{P_{:,j},:} \\
 |P_{:,j}|^{-1} n r C_H(1) & = \widetilde{C}_H(1) = \Sigma^{-2}
\end{align*} 
Furthermore, it is $|\mathbb{V}_\omega| = c \frac{\omega^{\ell+k}}{\ell+k} \nu(\ell-1,k-1)$. 
Factoring out this base volume in $W = |\mathbb{V}_\omega| \widetilde{W}$
by using properties of the $\Gamma$ function, one derives:
\begin{align*}
 \widetilde{W}(0) = \widetilde{C}_H(0),\  \widetilde{W}(1) = \frac{|P_{:,j}|}{n} \omega^2 \zeta_{1} \widetilde{C}_H(1), \quad \zeta_{1} = \frac{\ell}{r (k + \ell + 2)}
\end{align*}
Restating the result again as a least squares problem finishes the proof.}
\end{proof}

\begin{remark}[Specification of constants]\label{constants_matrices}
 Let $\#_X := nr,\ \#_{Y} := rm$ be the sizes of the components in the matrix decomposition.
  The constants in Theorem \ref{miofthalsrerefufoma} are given by 
  \begin{align*}
  \zeta_2 = \frac{\#_{Y}}{r (\#_X + \#_{Y} + 2)}, \quad \zeta_1 = \frac{\#_X}{r (\#_X + \#_{Y} + 2)}.
  \end{align*}
  However, when changing the rank, this would impose a slight offset in continuity of both $f_{{\mathcal{M}^{(1)}}^{\ast}}$ and $f_{{\mathcal{M}^{(2)}}^{\ast}}$.
  This problem is simply resolved by substituting $\omega$ by $\widetilde{\omega}$ properly for each value $r$, such that we can set and normalize
  \begin{align*}
  \omega^2 \zeta_2 = \widetilde{\omega}^2 \frac{m}{n+m}, \quad \omega^2 \zeta_1 = \widetilde{\omega}^2 \frac{n}{n+m}.
  \end{align*} 
  We will still just write $\omega$ although we replace $\zeta_1$ and $\zeta_2$ by the adapted values.
\end{remark}

Manipulating $\omega$ does not change the representation independency of the two
methods $(\mathcal{M}^{(1)})^{\ast}$ and
$(\mathcal{M}^{(2)})^{\ast}$, since for fixed $r$,\newwo{\ the value} $\omega > 0$ just remains an arbitrary constant (cf. Lemma \ref{in}).
We arrive at the main result for the matrix case:

\begin{theorem}[Stability of the variational matrix methods]\label{stablematrixmethods}
 The methods $(\mathcal{M}^{(1)})^{\ast}$ \eqref{methodX} and $(\mathcal{M}^{(2)})^{\ast}$ \eqref{methodY}
 are stable. 
\end{theorem}
\begin{proof}
 Follows as special case from the proof of the tensor version, Theorem \ref{coofthfiadfmist}.
\end{proof}

\old{9}{
The validity of the theorem boils down to one property. Consider a very small last singular value
$\sigma_r$. Then the corresponding parts of the minimizer are heavily penalized. If the value
converges to zero, the penalty tends to $+\infty$, which in limit leads to the same result as if
the representation had been truncated beforehand. The systematic discontinuity as in Example \ref{alsanadfarnocowitr} does no
longer occur. The actual proof is quite technical, yet this remains the base argument.
}\BCn{i1}{i}%
Despite the technicalities involved in the proof, the simplicity of the idea becomes apparent by setting $P = \mathcal{I}$ (being
analogous to the argumentation in section \ref{sec:conect}).
For a certain constant $c \in \R$, in the setting of Theorem \ref{miofthalsrerefufoma}, we then have
\begin{align}
 f_{{\mathcal{M}^{(2)}}^{\ast}}(U \Sigma V^T) & \quad = \quad U \quad \cdot \quad \underbrace{(I + c \Sigma^{-2})^{-1}}_{\mbox{regularization}} \quad \cdot \quad \underbrace{\vphantom{(I + c \Sigma^{-2})^{-1}} U^T \ M}_{\mbox{standard ALS}} \label{filterex} \\
 & \quad = \quad (\ (1+c\sigma_1^{-2})^{-1} \cdot U_{:,1} U_{:,1}^T M,\ \ldots,\ (1+c\sigma_r^{-2})^{-1} \cdot U_{:,r} U_{:,r}^T M\ ). \nonumber
\end{align}
If now $\sigma_r \rightarrow 0$, then also $(1+c\sigma_r^{-2})^{-1} \rightarrow 0$ and we obtain the same result as if we would have truncated the representation $(X,Y) = (U,\Sigma V^T)$ to rank $r-1$ beforehand.
We also denote these additional factors as \textit{filter}, as they filter out influence corresponding to low singular values.
Note that obtaining small singular values is not penalized, but using components corresponding to small ones is.
\\\\
Algorithm \ref{MatrixCompletion} performs one stable ALS approximation (SALSA) sweep, that is it applies 
the two stable methods from Theorem \ref{stablematrixmethods}. \old{13}{To substitute the perished replenishment term \eqref{conv}
we limit all singular values from below by $\sigma_{\mathrm{min}} > 0$ (cf. Remark \ref{repllowerlimit}, Appendix B),}\BCn{k1}{k}%
\new{13}{Before each update, any current singular value $\sigma_i < \sigma_{\mathrm{min}}$ is replaced by $\sigma_{\mathrm{min}}$,
which in turn is set as fraction $f_{\min} \ll 1$ of the current residual (Algorithm \ref{CALS_alg_matrix}). 
Although the influence of this manipulation on the subsequent step is thereby marginal, it
is necessary since otherwise $\sigma_i$ may irreversibly converge to zero (cf. Remark \ref{repllowerlimit}, for more details, see Section \ref{sec:filter}).
}

\begin{algorithm}
  \caption{Stable Matrix Completion \label{MatrixCompletion}}
  \begin{algorithmic}[1]
    \REQUIRE limit $\sigma_{\mathrm{min}}$, parameter $\omega$, initial guess $A = X Y \in \R^{n \times m}$ such that \old{13}{$Y$ contains the right}\new{13}{$X$ contains the left} singular vectors of $A$, and data points $M|_P$
    \STATE \new{13}{compute the SVD $U \Sigma V^T := Y$ and update $\sigma_i := \mbox{max}(\sigma_i,\sigma_{\mathrm{min}}),\ i = 1,\ldots,r$}
    \STATE \new{13}{set $X := X U \Sigma$ and $Y := V^T$}
    \STATE for $i = 1,\ldots,n$ update
        \begin{align} \label{stmethX}
     X_{i,:} := \argmin_{\widetilde{X}_{i,:}} \|\widetilde{X}_{i,:} Y - M_{i,:} \|_{P_{i,:}}^2 & + \frac{|{P_{i,:}}|}{m} \frac{\omega^2 m}{n + m} \|\widetilde{X}_{i,:} \Sigma^{-1}\|_F^2
    \end{align}
    \oldSTATE{13}{compute the SVD $U \Sigma V^T := X$ and update $\sigma_i := \mbox{max}(\widetilde{\sigma}_i,\sigma_{\mathrm{min}}),\ i = 1,\ldots,r$}
    \oldSTATE{13}{set $X := U$ and $Y := \Sigma V^T Y$}
     \STATE \new{13}{compute the SVD $U \Sigma V^T := X$ and update $\sigma_i := \mbox{max}(\sigma_i,\sigma_{\mathrm{min}}),\ i = 1,\ldots,r$}
    \STATE \new{13}{set $X := U$ and $Y := \Sigma V^T Y$}    
     \STATE for $j = 1,\ldots,m$ update
    \begin{align} \label{stmethY}
     Y_{:,j} := \argmin_{\widetilde{Y}_{:,j}} \|X \widetilde{Y}_{:,j} - M_{:,j} \|_{P_{:,j}}^2 & + \frac{|{P_{:,j}}|}{n} \frac{\omega^2 n}{n + m} \|\Sigma^{-1} \widetilde{Y}_{:,j}\|_F^2
    \end{align}
   \oldSTATE{13}{compute the SVD $U \Sigma V^T := Y$ and update $\sigma_i := \mbox{max}(\widetilde{\sigma}_i,\sigma_{\mathrm{min}}),\ i = 1,\ldots,r$}
    \oldSTATE{13}{set $X := X U \Sigma$ and $Y := V^T$} 
  \end{algorithmic}
\end{algorithm}
\subsection{Rank Adaption}\label{sec:matrixra}
%
%
The key aspect of stability is that it rendered explicit rank adaption near unnecessary, since the optimization relies on \BCn{k2}{k}%
the magnitude of singular values, as \eqref{filterex} suggests. Starting from an initial representation \BCn{l1}{l}%
and a large value $\omega$ proportional to the norm of the iterate $A$, the parameter $\omega$ is decreased after each iteration. The singular values are decided into two types:
\begin{definition}[Stabilized rank and minor singular values]\label{virtstabsv}
A singular value $\sigma_i$ is called
\textit{stabilized}, if it is larger than a certain fixed fraction of $\omega$ (meaning any corresponding terms have an increased influence, cf. \eqref{filterex}).
Otherwise, it is called minor (as a removal of such does
not notably change the next steps).
The \textit{stabilized rank} only counts the number of stabilized singular values.
\end{definition}
The actual rank is only modified as to make sure that there is always a fixed, small amount of minor singular values, i.e.\BCn{k3}{k}%
\begin{align}
 |\{ i \mid 0 < \sigma_i < f_{\mathrm{minor}} \cdot \omega \}| \overset{!}{=} k_{\mathrm{minor}}, \label{matrixrankincr}
\end{align}
for constants $f_{\mathrm{minor}} < 1$ and $k_{\mathrm{minor}} \in \mathbb{N}$.
This states the basic concept of implicit rank adaption and we will provide a more detailed discussion in the later section \ref{sec:seimannounraad} for the elaborate tensor case.
For performance evaluation, a validation set may be used:
\movedwo{
\begin{definition}[\oldwo{Control}\newwo{Validation} set]\label{cose}
For a given \oldwo{index set }$P$\newwo{, the sampling or training set}, we define $P_2 \subset P$ as \oldwo{control}\newwo{validation} set. This set may be chosen
randomly or specifically distributed. The actual set used for the optimization is replaced by $P \leftarrow P \setminus P_2$ (keeping
the same symbol). 
\end{definition}}
%
%
%
\begin{algorithm}
\caption{SALSA Algorithm \label{CALS_alg_matrix}}
 \oldjg{13}{
  \begin{algorithmic}[1]  \label{ra_matrix}
  \REQUIRE $P \subset \Index$, $M|_P$, $r_{\mathrm{start}}$ (and parameters)
  \STATE initialize $X, Y$ s.t. $|P| \|\tau_r(X,Y)\|_F^2 = |\Ind|\|M|_P\|_P^2$ for $r = r_{\mathrm{start}}$
  \STATE set $\omega = 1/2 \|\tau_r(X,Y)\|_F^2$
  \STATE optional: split off a small control set $P_2 \subset P$ for performance evaluation
  \FOR{${\tt iter}=1,2,\ldots$}
    \STATE proceed SALSA sweep$^{\ast}$ (Algorithm \ref{MatrixCompletion})   
    \STATE$^{\ast}$: and adapt lower limit $\sigma_{\mathrm{min}}$
    \STATE$^{\ast}$: and decrease $\omega$ if residual decreases too slow
    \IF{$^{\ast}$: $\sigma_r$ becomes stabilized}
      \STATE increase rank using a marginal singular value
    \ENDIF
    \IF{final breaking criteria apply}
      \STATE terminate algorithm
    \ENDIF
  \ENDFOR
  \end{algorithmic}
  }
  \new{13}{
  \begin{algorithmic}[1]  \label{ra_matrix}
  \REQUIRE $P \subset \Index$, $M|_P$
  \STATE initialize $X, Y$ s.t. $\tau_r(X,Y) \equiv \|M|_P\|_1/|P|$ for $r \equiv 1$ and $\omega = 1/2 \cdot \|\tau_r(X,Y)\|_F$
  \STATE split off a small validation set $P_2 \subset P$ for performance evaluation
  \FOR{${\tt iter}=1,2,\ldots$}
    \STATE proceed SALSA sweep$^{\ast}$ (Algorithm \ref{MatrixCompletion})   
    \STATE$^{\ast}$: and renew lower limit $\sigma_{\mathrm{min}} := f_{\min} \cdot \frac{|\Index|}{|P|} \|\tau_r(X,Y) - M\|_P$
    \STATE$^{\ast}$: and decrease $\omega$
    \STATE adapt rank according to \eqref{matrixrankincr} (start this when the first few iteration have passed)
    \IF{a stopping criterion applies}
      \STATE terminate algorithm
      \STATE \textbf{return} iterate for which $\|\tau_r(X,Y) - M\|_{P_2}$ was lowest
    \ENDIF
  \ENDFOR
  \end{algorithmic} }
\end{algorithm}
The \old{45}{final breaking}\new{45}{stopping} criteria in Algorithm \ref{CALS_alg_matrix} may depend on the
behavior of $P_2$, or may simply be based on a rank bound, e.g. $r \leq |P|/(n+m)$. The latter criterion, however, only suffices in the matrix case.
\old{13}{Note that through the stability, a rank overestimation is not harmful, albeit increases the computational complexity.}
\begin{remark}[On convergence estimates]\BCn{j11}{j}%
Due to the specific dependency of the regularization on the current singular values of the iterate,
it may be impossible to restate the problem as minimization of a modified, global cost function.
Furthermore, the iterate does not remain on a fixed manifold of low rank. Given that also rank deficient points may be included in the optimization
due to the stability, theoretical convergence results so far remain subject to future work.
\end{remark}
\section{Generalization to High-Dimensional Tensors}\label{sec:gentohighd}
For the rest of this article we consider the problem of (approximately) reconstructing a \textit{tensor} $M \in \R^{\Ind}$
from a given data set $M|_P = \{M_p\}_{p \in P}$, where now \old{3}{$P \subset \Ind$}\BCn{c2}{c}%
\begin{align}
P \subset \Ind := \bigtimes_{\mu = 1}^d \mathcal{I}_\mu, \quad \mathcal{I}_\mu := \{1,\ldots,n_\mu\}, \ \mu = 1,\ldots,d.
\end{align}
We further generalize the 
representation map and data space $\tau_r: \mathcal{D}_r \rightarrow \mathcal{D}_r$ as well as the rank $r$ together
with the spaces $\mathcal{T}_r$ to the tensor train (TT-)format (Definition \ref{TTformat}, \cite{OsTy09_Bre,Os11_Ten}, also called Matrix Product States
(MPS) \cite{Wh92_Den,Vi03_Eff} or interpreted as a special case of the Hierarchical Tucker format \cite{HaKu09_ANe,Gr10_Hie}).
Stability for tensor methods is thereby defined through Definition \ref{cowitr} as well. 
For the underlying tensor $M$ it is now assumed that its TT-singular values (cf. Definition \ref{ttsiva}) decline sufficiently fast
in order to yield an approximation $M_{\varepsilon}\in \mathcal{T}_r$.
\begin{definition}[TT-singular values and TT-rank]\label{ttsiva}
  Analogously to the TT-ranks, we define the TT-singular values $\sigma=(\sigma^{(1)},\ldots,\sigma^{(d-1)})$ of a tensor $A$ 
  as the unique singular values of the corresponding matricizations 
  $A^{(\newwo{\{}1,\ldots,\mu\newwo{\}})}\in\mathbb{R}^{(\Ind_1\times\cdots\times \Ind_{\mu}) \times (\Ind_{\mu+1}\times\cdots\times \Ind_{d})}$ with entries
  \[A^{(\newwo{\{}1,\ldots,\mu\newwo{\}})}((i_1,\ldots,i_\mu),\, (i_{\mu+1},\ldots,i_d)) := A(i)\]
  of $A$, such that $\sigma^{(\mu)}$ contains the ordered singular values of $A^{(\newwo{\{}1,\ldots,\mu\newwo{\}})}$, $\mu = 1,\ldots,d-1$. 
  The TT-rank $r_\mu$ is the number of nonzero TT-singular values in $\sigma^{(\mu)}$. 
  We also call $\sigma^{(\mu)}$ the $\mu$-th singular values.
\end{definition}
\begin{definition}[TT-format]\label{TTformat} A tensor $A\in\mathbb{R}^{\Ind}$ is in the set $\new{14}{\mathcal{T}_r}$, often also denoted $\mathrm{TT}(r)$, $r\in\mathbb{N}^{d-1}$\BCn{n1}{n},
  if for $\mu=1,\ldots,d$ and $i_\mu\in\Index_{\mu}$ there exist 
  $G_\mu(i_\mu)\in\mathbb{R}^{r_{\mu-1}\times r_\mu}$\, ($r_0=r_d=1$) such that\BCn{r1}{r}
  \[ \new{18}{A = \tau_r(G),} \quad A(i_1,\ldots,i_d) := G_1(i_1)\cdots G_d(i_d),\qquad i\in\Ind.\]
  The representation of $A$ in this form is shortly called the TT format. \new{14}{The single $G_\mu$, as well as
  similarly structured objects, are called cores (or sometimes nodes)}.
  \old{14}{If we want to stress the dependency
  of $A$ on the so-called cores $G_\mu$ then we write  
  $A = \tau_r(G) := G_1 \ck \cdots \ck G_d$, 
  where we define $\ck$ for two cores 
  $H_1, H_2$ as $(H_1 \ck H_2)((i,j)) := H_1(i) \ H_2(j)$ (interpreting TT-cores as vectors of matrices).}
\end{definition}
The TT-SVD \cite{Os11_Ten} provides that if $r$ is the TT-rank of $A$, then it follows $A \in \mathcal{T}_r$. 
So $G$ is also called low-rank representation of $A$.
  The function $\tau_r$\BCn{o1}{o} will appear in different forms, each denoting the mapping from a type of representation of rank $r$ \BCn{r2}{r}%
  to the represented object.
  \begin{definition}[Unfoldings]\label{unfoldings}
\moved{14}{
  For a core $H$ (possibly a product of smaller cores in the TT representation) with $H(i) \in \R^{k_1 \times k_2}$, $i = 1,\ldots,n$,
  we denote the left unfolding $\lhb{H}\in\mathbb{R}^{k_1 \cdot n \times k_2}$, in which the matrices contained in the core are stacked below each other, and right unfolding  $\rhb{H}\in\mathbb{R}^{k_1 \times k_2 \cdot n}$,
  in which they are stacked side by side, by
  \begin{align*}
    \left(\lhb{H}\right)_{(\ell,j), q}:= \left(H(j)\right)_{\ell,q}, \quad \left(\rhb{H}\right)_{\ell,(q,j)}
    &:= \left(H(j)\right)_{\ell,q},\quad 
  \end{align*}
  for $1\le j\le n$, $1\le\ell\le k_1$ and $1\le q\le k_2$. }
  \end{definition}
Our goal is to determine the ranks adaptively.
We will demonstrate why this can be even more troublesome than in the matrix case in the following.\\
%
%
%
\new{16,46}{For many tensors that stem from practical applications one observes exponentially decaying singular values, %
yet the rate of decay may strongly vary for the $d-1$ different matricizations. 
Theoretically, there is no non-trivial limitation to the shape of the TT-singular values as described in following lemma.}
\begin{lemma}[\newwo{Feasibility of TT-singular values \cite{GrKrxx_The}}]\label{TTfeas}\BCn{p1}{p}\BCn{w1_}{w_}
Let $\sigma = (\sigma^{(1)},\ldots,\sigma^{(d-1)})$ be a $(d-1)$-tuple of weakly decreasing $r_\mu$-tuples, $\mu = 1,\ldots,d-1$,
for which $\|\sigma^{(i)}\|_2 = \|\sigma^{(j)}\|_2$ for all $i,j = 1,\ldots,d-1$.
Then there exist mode sizes $n_1,\ldots,n_d \in \mathbb{N}$ and a tensor $A \in \R^{\mathcal{I}}$ such that this tensor has TT-singular values $\sigma$.
%
\end{lemma}
\new{16,46}{The following exemplary tensor emphasizes the problems heuristics encounter in the rank adaption
of a tensor with inconvenient singular values.}
\begin{example}[Rank adaption test tensor]\label{raadtete}
  For $k \in \mathbb{N}$, let $Q \in \R^{n_1 \times \ldots \times n_4}$ be an orthogonally decomposable $4$-dimensional Tensor with
  TT-rank $(k,k,k)$ and uniform singular values $\sigma^{(1)}=\sigma^{(2)}=\sigma^{(3)} = (\alpha,\alpha,\ldots)$ 
  as well as $B \in \R^{n_5 \times n_6}$ be a rank $2k$ matrix with exponentially decaying singular values $\sigma^{(5)} \propto (\beta^{-1},\beta^{-2},\ldots)$ for some $\alpha,\beta > 0$.
  Then the separable tensor $M \in \R^{n_1 \times \ldots \times n_6}$ defined by $M(i) = Q(i_1,\ldots,i_4) \cdot B(i_5,i_6)$
  has singular values $\sigma$ and rank $r^{(M)} = (k,k,k,1,2k)$.
\end{example}
By its definition, $M$ is separable into a $4$- and a $2$-dimensional tensor ($Q$, $B$).
Knowing this would of course drastically simplify the problem. 
We now consider the performance of two very basic rank adaption ideas.
\begin{enumerate}
  \item \textit{Greedy, single rank increase}: We test for maximal improvement by
  increase of one of the ranks $r_\mu$ ($\mu=1,\ldots,d-1$) of the iterate starting from $r \equiv 1$. 
  Solely increasing either of $r_2$, $r_3$ or $r_4$ will give close to no improvement.
  As further shown in \cite{EsKh15_Con},
  the approximation of orthogonally decomposable tensors with lower rank can be problematic.
  In numerical tests \new{17}{(cf. Section \ref{sec:recratt})}, we can observe that $r_5$ is often increased to a maximum first. 
  Thereby, extremely small singular values are involved that lie far beneath the current 
  approximation error, although the rank is not actually overestimated. 
  \item \textit{Uniform rank increase and coarsening}: We increase every rank  
  $r_\mu$ ($\mu=1,\ldots,d-1$) starting from $r \equiv 1$ and decrease ranks when the corresponding
  singular values are below a threshold.
  The problem with this strategy is quite \oldwo{obvious}\newwo{plain}, namely that
  \new{17}{for the target tensor $M$, it holds} $r^{(M)}_5 = 1$\BCn{q1}{q}. If this rank is overestimated, the observed sampling points will be misinterpreted
  (overfitting) and it does not matter how small corresponding singular values become 
  (see Lemma \ref{alsanadfarnocowitr}). 
\end{enumerate}
These indicated difficulties gain more importance with high dimension, but for one microstep at a time, can
be resolved by regarding only three components of a tensor. We describe this in the following Section \ref{sec:3d}.\\\\
  \newwo{The tensor from Example \ref{raadtete} can also be constructed explicitly.}
  \moved{15}{\BCn{f2_}{f_}%
We define a representation $G$ for $A = \tau_r(G)$ via left and right unfoldings by
\begin{align*}
 \lhb{G_1} & := Q_1, \\
 G_2(i_2)  & = G_3(i_3) := I_k, \qquad 1\le i_2 \le n_2,\quad 1\le i_3 \le n_3, \\
 \rhb{G_4} & := Q_4^T, \ \lhb{G_5} := Q_5,\ \rhb{G_6} := \Sigma_5 Q_6^T,
\end{align*}
for (column-) orthogonal matrices $Q_1 \in \R^{n_1 \times r_1}, \ Q_4 \in \R^{n_4 r_4 \times r_3}, \ Q_5 \in \R^{r_4 n_5 \times r_5}
\ Q_6 \in \R^{n_6 \times r_5}$ and $(\sigma_5)_i \propto \beta^{-i}$, $\beta > 1$. 
This tensor has exactly the properties postulated in the example.}
\subsection{Notations and Reduction to Three Dimensions}\label{sec:3d}
For necessary simplicity, we reduce the $d$ dimensional setting to a three dimensional one:
\begin{definition}[Interface matrices]\label{interfacematrix}
  Let $\mu \in \{1,\ldots,d\}$. For a representation $G$, we \BCn{o4}{o}%
  define the left interface matrix $G^{<\mu} \in \R^{n_1\ldots n_{\mu-1} \times r_{\mu-1}}$ (cf. Definitions \ref{TTformat}) via
    \begin{align*}
    G^{<\mu}_{(i_1,\ldots,i_{\mu-1}),:} & := G_1(i_1) \ldots G_{\mu-1}(i_{\mu-1}),
    \end{align*}
    as well as the right interface matrix $G^{>\mu} \in \R^{r_\mu \times n_{\mu+1}\ldots n_d}$ via
    \begin{align*}
    G^{>\mu}_{:,(i_1,\ldots,i_{\mu-1})} & := G_1(i_{\mu+1}) \ldots G_{d}(i_{d}).
  \end{align*} 
We further define the core $A_{(\mu)} \in \left(\R^{n_1\ldots n_{\mu-1} \times n_{\mu+1} \ldots n_{d}}\right)^{\Ind_\mu}$
  as core unfolding with respect to mode $\mu$ of a tensor $A$ by
  \begin{align}
   A_{(\mu)}(i_\mu)_{(i_1,\ldots,i_{\mu-1}),(i_{\mu+1},\ldots,i_d)} = A(i). \label{coreunfold}
  \end{align}
  For any representation it hence holds $(\tau_r(G))_{(\mu)}(j) = G^{<\mu} \ G_\mu(j)\  G^{>\mu}$, $j = 1,\ldots,n_\mu$.
  Multiplication of a core $H$ with a matrix $B$ yields again cores, $HB$ and $BH$, given by
  \begin{align*}
   (HB)(j) := H(j) B \quad \mbox{and} \quad (BH)(j) := B H(j),
  \end{align*}
 respectively, for all possible $j$.
  The previous notations then allow us to compactly write
  \begin{align} \label{thr} (\tau_r(G))_{(\mu)} = G^{<\mu} \ G_\mu\  G^{>\mu}. \end{align}
  This relation is displayed in Figure \ref{LNR}.
\end{definition}
\begin{figure}
\centering
\ifuseprecompiled
\includegraphics[width=0.98\textwidth]{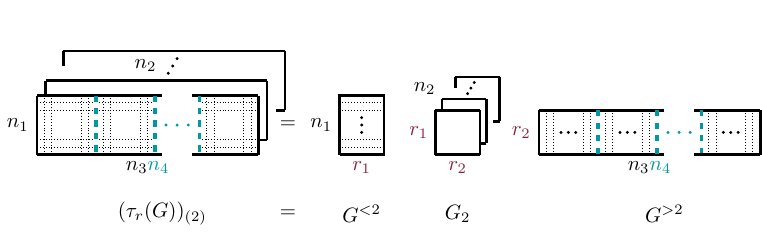}
\else
\tikzsetnextfilename{core_unfolding}
\resizebox{\columnwidth}{!}{
\input{tikz_base_files/core_unfolding/core_unfolding.tex}
}
\fi
       \caption{\label{LNR} The decomposition of a core unfolding with respect to $2$ of a four dimensional tensor into the left and right
       interface matrices as well as the intermediate core.}
\end{figure}%
\newwo{
The interface matrices equal left and right unfoldings, respectively:
  \begin{align}
    G^{<\mu} & = \lhb{G_{1,\ldots,\mu-1}}, \quad G_{1,\ldots,\mu-1}((i_1,\ldots,i_{\mu-1})) :=  G_1(i_1) \ldots G_{\mu-1}(i_{\mu-1}), \label{superdupercores} \\
    G^{>\mu} & = \rhb{G_{\mu+1,\ldots,d}}, \quad G_{\mu+1,\ldots,d}((i_{\mu+1},\ldots,i_{d})) :=  G_1(i_{\mu+1}) \ldots G_{d}(i_{d}). \nonumber
  \end{align} }
%
%
In terms of Definition \ref{restrictions}, for $P = \{ p^{(i)} \mid i = 1,\ldots,|P| \}$ and the operation $(\cdot)_{(\mu)}$ \eqref{coreunfold},
which is used to combine components $s = 1,\ldots,\mu-1$ as well as $s = \mu+1,\ldots,d$, let
\[ P_{(\mu)} = \{((p^{(i)}_1,\ldots,p^{(i)}_{\mu-1}),p^{(i)}_\mu,(p^{(i)}_{\mu+1},\ldots,p^{(i)}_d)) \mid i = 1,\ldots,|P| \}. \]
Thereby, $A|_P$ contains the same entries as $(A_{(\mu)})|_{P_{(\mu)}}$.
For the selection of one slice, $(\cdot)_{(\mu)}(j)$, we denote
\begin{align} P_{(\mu)}(j) = \{((p^{(i)}_1,\ldots,p^{(i)}_{\mu-1}),(p^{(i)}_{\mu+1},\ldots,p^{(i)}_d)) \mid p^{(i)}_\mu = j,\ i = 1,\ldots,|P| \}. \label{rrpex} \end{align}
Likewise, the vectorization of an index set $S \subset \R^{n \times m}$ is defined by $\mbox{vec}(S) = \{s_1 + n(s_2-1) \in \R \mid s \in \R^2 \}$.

Without loss of generality we can restrict our consideration to three dimensional tensors that correspond to the
left and right interface matrices as well as the respective intermediate cores (cf. \oldwo{Remark}\newwo{Definition} \ref{unfoldings}):
\begin{remark}[Reduction to three dimensions]\label{reduction}
  When $\oldwojg{\mu \in D}\newwo{\mu \in \{1,\ldots,d\}}$ is fixed, we will only use the short notations
  \begin{itemize}
    \item $(\Lf,\ \N,\ \Rf) = (G^{<\mu},\ G_\mu,\ G^{>\mu})$
    \item $(\nL,\ \nN,\ \nR) = (n_1 \cdot \ldots \cdot n_{\mu-1},\ n_\mu,\ n_{\mu+1}\cdot\ldots\cdot n_d)$
    \item $(\sigL,\ \sigR) = (\sigma^{(\mu-1)},\ \sigma^{(\mu)})$ and $(\SigL,\ \SigR) = (\Sigma^{(\mu-1)},\ \Sigma^{(\mu)}) = (\diag(\sigma^{(\mu-1)}),\ \diag(\sigma^{(\mu)}))$
    \item $(\rL,\ \rR) = (r_{\mu-1},\ r_\mu)$
    \item $\B = M_{(\mu)}$ and $\Ps = P_{(\mu)}$
  \end{itemize}
  \end{remark}
Hence, the important variables are
$L$ (\textbf{l}eft part), $R$ (\textbf{r}ight part), $N$ (\textbf{n}ew part), $B$ (right hand side) and $S$ (\textbf{s}ampling). 
  The microsteps $\mathcal{M}^{(1)},\ldots,\mathcal{M}^{(d)}$ of ALS for
  the tensor train format only change the respective $G_\mu$ and are given by
  \begin{align}
    \nonumber \mathcal{M}^{(\mu)}_r(G) & := (G_1,\ldots,G_{\mu-1},G^+_\mu,G_{\mu+1},\ldots,G_d) \\
    \label{unregM} G^+_{\mu} & := \argmin_{G_\mu} \|\tau_r(G) - M\|_P = \argmin_{\widetilde{N}} \|\Lf \cdot {\widetilde{N}} \cdot \Rf - \B\|_\Ps
  \end{align}
  or equivalently $G^+_{\mu}(j) = \argmin_{{\widetilde{N}}(j)} \|\Lf \cdot {\widetilde{N}}(j) \cdot \Rf - \B(j)\|_{\Ps(j)}$ --- an equation in which only matrices are involved.
  We only need to consider three-dimensional tensors $A \in \R^{\nL \times \nN \times \nR}$, with TT-rank $(\rL,\rR)$ and TT-singular values $(\sigL, \sigR)$. For simplicity, we
  redefine $\tau_r$ for this case via $A = \tau_r(\Lf,\N,\Rf)$.

\section{Stable Alternating Least Squares Microsteps for Tensor Completion}\label{sec:moofthsicorefu}

With the derivation in Section \ref{sec:3d}, we have seen that it is sufficient to only consider ALS for order $3$ tensors.
\begin{definition}[Variational residual function (cf. Definition \ref{sicorefuinmosmo2matrix})]\label{sicorefuinmosmo2}
  Let $\omega \geq 0, \ \gammaL,\gammaR > 0$ and $\B,\Ps,\Lf,\N,\Rf$ as in Definition \ref{reduction}. We define
  the variational residual function $C := C_{\B,\Ps,\Lf,\N,\Rf}$ for $\mathbb{V}_{\omega} := \mathbb{V}_{\omega}(\Lf,\N,\Rf)$ by
  \begin{align} \label{corf}
    C(\widetilde{\N}) & := \int_{\mathbb{V}_{\omega}(\Lf,\N,\Rf)} \| (\Lf + \gammaL \Delta \Lf) (\widetilde{\N} + \Delta \N) (\Rf + \gammaR \Delta \Rf) - B\|^2_{\Ps} \intd   \Delta \Lf \intd  \Delta \N \intd  \Delta \Rf,  \\
 \mathbb{V}_{\omega} & := \{(\Delta \Lf,\Delta \N,\Delta \Rf) \mid \|\Delta \Lf \N \Rf\|^2_{\newwo{F}} + \|\Lf \Delta \N \Rf\|^2_{\newwo{F}} + \|\Lf \N \Delta \Rf\|^2_{\newwo{F}}\leq \omega^2 \}, \nonumber
  \end{align}
  where $\gammaL, \gammaR$ are scalings that only depend on the proportions of the representation, to
  be specified by Lemma \ref{retafu}.
\end{definition}
As in the matrix case, $\Delta \N$ does not influence the minimizer, so we omit it from now on. It should further be noted that
$\mathbb{V}_{\omega}$ does \textbf{not} depend on the unknown $\widetilde{\N}$. The scalings $s_1$, $s_2$ will become relevant
in Section \ref{sec:backtod} in order to achieve a similar effect as in Remark \ref{constants_matrices}.
\subsection{Standard Representation of a TT-Tensor}
A representation $G = (\Lf,\N,\Rf)$ can be changed without changing the generated tensor $A = \tau_r(G)$ \cite{RoUs13_OnL,HoThRe12_Onm},
more specifically
\begin{align}\label{equi} \tau_r(G) = \tau_r(\widetilde{G}) \quad \Leftrightarrow \quad \widetilde{G} = (\widetilde{\Lf},\widetilde{\N},\widetilde{\Rf}) = (\Lf T_1^{-1}, T_1 \N T_2^{-1}, T_2 \Rf) \end{align}
for two regular matrices $T_1 \in \R^{\rL \times \rL}, T_2 \in \R^{\rR \times \rR}$.
One can define an extended standard representation that explicitly contains the unique TT-singular values $(\gamma,\theta)$
which is essentially unique (in terms of uniqueness of the truncated matrix SVD\footnote{Both $U \Sigma V^T$ and $\widetilde{U} \Sigma \widetilde{V}^T$ are truncated SVDs of $A$ if and only if there exists
an orthogonal matrix $W$ that commutes with $\Sigma$ and for which $\widetilde{U} = U W$ and $\widetilde{V} = V W$.
For any subset of pairwise distinct nonzero singular values, the corresponding submatrix of $W$ needs to be diagonal with entries in $\{-1,1\}$.}). 
For the construction, a slightly modified TT-SVD \cite{Os11_Ten} is used\footnote{Although called TT-SVD, unlike the matrix SVD, decompositions constructed by the algorithm do not explicitly contain the singular values.}. An analogous decomposition also appeared earlier in \cite{Vi2003_Eff}  
and is, within the quantum computing community, sometimes referred to as \textit{canonical MPS form}
(not to be confused with canonical polyadic decomposition).
This normalization is needed to obtain the same simplification as in Theorem \ref{miofthalsrerefufoma}.
%
\begin{lemma}[Standard representation]\label{stre}
  Let $A \in \R^{\nL \times \nN \times \nR}$ be a tensor. \\
  There exists an essentially unique (extended) representation
  \begin{align} \mathcal{G} = (\fL,\SigL,\fN,\SigR,\fR) \label{sr} \end{align}
  for which $A = \tau_r(\fL, \ \SigL \fN \SigR, \ \fR)$ as well as 
  $\fL \ \SigL \ \rhb{\fN \SigR \fR}$ and $\lhb{\fL \SigL \fN} \ \SigR \ \fR$
  are (truncated) SVDs of $A^{(\{1\})}$ and $A^{(\{1,2\})}$, respectively.
  This in turn implies that
  $\fL$ and $\lhb{\SigL \fN}$ are column orthogonal,  as well as $\fR$ and $\rhb{\fN \SigR}$ are row orthogonal. 
\end{lemma}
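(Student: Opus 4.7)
The plan is to construct $\mathcal{G}$ by a three-dimensional TT-SVD (two successive matrix SVDs) and then invoke the essential uniqueness of the truncated SVD for both the existence and the uniqueness of $\mathcal{G}$. The key algebraic ingredient I will reuse is the unfolding identity $\lhb{\fL\,\SigL\,\fN}=(I_{\nN}\kp\fL)\,\lhb{\SigL\,\fN}$, which follows directly from Notation~\ref{unfoldings}, together with the observation that $I_{\nN}\kp\fL$ inherits column orthogonality from $\fL$. A symmetric identity on the right side will handle the row-orthogonality claim for $\rhb{\fN\SigR}$.

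First I would compute a reduced SVD $A^{(\{1\})}=\fL\,\SigL\,V_1^T$ with $\rL$ equal to the matrix rank of $A^{(\{1\})}$, so that $\SigL$ is square and invertible. Reshaping $V_1^T$ into the three-index object $\fV$ with $\rhb{\fV}=V_1^T$, I form the matrix $B:=\lhb{\SigL\,\fV}\in\R^{(\rL\nN)\times\nR}$ and compute its reduced SVD $B=\hat U\,\SigR\,\fR$ with $\rR=\operatorname{rank}(B)$, so $\SigR$ is also invertible. I then define the middle core by $\lhb{\SigL\,\fN}:=\hat U$, i.e.\ by inverting $\SigL$ on the reshape of $\hat U$. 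Unfolding $B=\lhb{\SigL\,\fN\,\SigR\,\fR}$ gives $\fV=\fN\,\SigR\,\fR$, so $A=\fL\,\SigL\,\fN\,\SigR\,\fR=\tau_r(\fL,\,\SigL\fN\SigR,\,\fR)$, and since $\rhb{\fN\SigR\fR}=\rhb{\fV}=V_1^T$ is row orthogonal, the first required factorization $\fL\,\SigL\,\rhb{\fN\SigR\fR}$ is an SVD of $A^{(\{1\})}$ by construction.

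Next, using $A^{(\{1,2\})}=(I_{\nN}\kp\fL)\,B$ (which is immediate from the tensor-contraction formula for $A$) and substituting $B=\hat U\,\SigR\,\fR$, one obtains $A^{(\{1,2\})}=\bigl[(I_{\nN}\kp\fL)\hat U\bigr]\,\SigR\,\fR$. The bracket is column orthogonal as a product of two column-orthogonal matrices, and by the unfolding identity it equals $\lhb{\fL\,\SigL\,\fN}$; hence $\lhb{\fL\SigL\fN}\,\SigR\,\fR$ is an SVD of $A^{(\{1,2\})}$, and in particular $\lhb{\SigL\fN}=\hat U$ is column orthogonal. Row-orthogonality of $\rhb{\fN\SigR}$ is then the mirror argument applied to $\rhb{\fN\SigR}\,\fR=V_1^T$ (in the appropriate reshape), using that both $\fR$ and $V_1^T$ are row orthogonal. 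Essential uniqueness follows by applying the footnoted SVD statement twice in sequence: $(\fL,\SigL)$ and $(\SigR,\fR)$ are pinned down up to the usual orthogonal blocks on repeated-singular-value subspaces, minimality forces $\rL,\rR$ to be the matricization ranks, and $\fN=\SigL^{-1}\cdot\lhbinv{\hat U}$ is then determined. The main obstacle is bookkeeping — ensuring the unfolding operators $\lhb{\cdot},\rhb{\cdot}$ and the implied Kronecker conventions remain consistent between the two SVD steps; once those are fixed, all verifications reduce to routine matrix algebra.
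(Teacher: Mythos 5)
Your proposal is correct and follows essentially the same route as the paper: existence via a two-step TT-SVD construction (SVD of $A^{(\{1\})}$, reshape, SVD of the resulting $\rL\nN\times\nR$ matrix) and essential uniqueness inherited from the essential uniqueness of the truncated matrix SVD, with the orthogonality claims reduced to the unfolding/Kronecker identities. The only cosmetic difference is that the paper runs the SVDs on the cores of a pre-orthogonalized representation rather than on the full matricizations, and obtains the row-orthogonality of $\rhb{\fN\SigR}$ by appealing to uniqueness and the mirrored right-to-left construction, whereas you derive it directly from $\rhb{\fN\SigR}(I\kp\fR)=V_1^T$ — both are fine.
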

%
\begin{proof} 
  \textit{$1$. Uniqueness:} Let there be two such representations $\widetilde{\mathcal{G}}$ and $\mathcal{G}$. 
  Since the left-singular vectors of $A^{(\{1\})}$ are essentially unique, we conclude $\widetilde{\fL} = \fL W_1$ for an
  orthogonal matrix $W_1$ that commutes with $\SigL$. Via an SVD of $A^{(\{1,2\})}$
  it follows that $\widetilde{\fR} = W_2^T \fR$ for an
  orthogonal matrix $W_2$ that commutes with $\SigR$. Furthermore $\lhb{\fL \SigL W_1 \widetilde{\fN}} = \lhb{\widetilde{\fL} \SigL \widetilde{\fN}} = \lhb{\fL \SigL \fN} W_2$. The map $x \mapsto \lhb{\fL \SigL x}$
  is linear and, in this case, of full rank. This implies $\widetilde{\fN} = W_1^T \fN W_2$. \\
  \textit{$2.$ Existence (constructive):}
  Let $A = \tau_r(\widetilde{\Lf},\widetilde{\N},\widetilde{\Rf})$ where $\rhb{\widetilde{\N}}$ and $\widetilde{\Rf}$ are column orthogonal (this
  can always be achieved using \eqref{equi}). An SVD of $\widetilde{\Lf}$ yields $\widetilde{\Lf} = \fL \ \SigL\ V_1^T$, since
  $\fL \ \SigL\ \rhb{V_1^T \widetilde{\N} \widetilde{\Rf}}$ is a truncated SVD of $A^{(\{1\})}$. 
  A subsequent
  SVD of $\lhb{\SigL V_1^T \widetilde{\N}}$ yields $\SigL V_1^T \widetilde{\N} = \widehat{\N}\ \SigR\ V_2^T$, since 
  $\lhb{\fL \widehat{\N}}\ \SigR \ (V_2^T \widetilde{\Rf})$ is a truncated SVD of $A^{(\{1,2\})}$. We can finish
  the proof defining $\fN := \SigL^{-1} \widehat{\N}$ and $\fR = V_2^T \widetilde{\Rf}$. Note that, by construction,
  $\lhb{\SigL \fN}$ is column-orthogonal. \\
  \textit{$3.$ Implied orthogonality:} 
  Using the essential uniqueness, it follows that $\lhb{\SigL \fN}$ must indeed be column-orthogonal. By analogously constructing the extended representation
  from right to left we would obtain that $\rhb{\fN \SigR}$ is row-orthogonal. By uniqueness it follows again that this is always the case.
\end{proof}
\begin{remark}[Conventional form of standard representation]\label{cofoofstre}
  Throughout the rest of the article, the standard representation will mostly appear in form
  of a specific, conventional representation
  \begin{align} \label{csp} (\Lf,\ \N,\ \Rf) = (\fL, \ \SigL \fN \SigR, \ \fR), \end{align}
  hence with interface matrices $\fL$ and $\fR$ given by corresponding singular vectors.
\end{remark}
\subsection{Minimizer of the Variational Residual Function}
We define (from now on) our method as
\begin{align} \label{method} \mathcal{M}^{\ast}(\Lf,\ \N,\ \Rf) 
  = (\Lf,\ \argmin_{\widetilde{\N}} C(\widetilde{\N}),\ \Rf)
\end{align}
with $C = C_{\B,\Ps,\Lf,\N,\Rf}$ as in \eqref{corf}. Although Theorem \ref{miofthalsrerefu},
or more specifically the regularity of $Y(j)$ given by \eqref{Yj}, later provide the uniqueness
of the minimizer, we up to that point formally use the minimization of $\|\tau_r(\Lf,\widetilde{\N},\Rf)\|_F$ as
secondary and representation independent criterion. The special cases $\mu \in \{1,d\}$ can be
derived from the general case (Remark \ref{casemu1d}) and comply with the matrix case.
\begin{lemma}[Representation independent]\label{in}
  The method $\mathcal{M}^{\ast}$ is representation independent.
\end{lemma}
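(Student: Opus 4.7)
My plan is to show that any two representations of the same tensor yield output tensors that coincide under $\mathcal M^\ast$, by demonstrating that the argmins of the associated residual functions $C$ are intertwined by the gauge transformation linking the two representations.

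First I would invoke \eqref{equi} to write $\widetilde G = (\Lf T_1^{-1},\, T_1 \N T_2^{-1},\, T_2 \Rf)$ for regular matrices $T_1,T_2$ and introduce the change of variables
\begin{align*}
\Delta\widetilde\Lf := \Delta\Lf\, T_1^{-1},\qquad \Delta\widetilde\N := T_1\,\Delta\N\, T_2^{-1},\qquad \Delta\widetilde\Rf := T_2\,\Delta\Rf
\end{align*}
inside the integral \eqref{corf}. A direct calculation then yields
\begin{align*}
(\Lf+\gammaL\Delta\Lf)(\widetilde\N_{\mathrm{cand}}+\Delta\N)(\Rf+\gammaR\Delta\Rf) \;=\; (\widetilde\Lf+\gammaL\Delta\widetilde\Lf)\bigl(T_1\widetilde\N_{\mathrm{cand}} T_2^{-1}+\Delta\widetilde\N\bigr)(\widetilde\Rf+\gammaR\Delta\widetilde\Rf),
\end{align*}
and each of the three constraint norms defining $\mathbb V_\omega$ is individually invariant, e.g.\ $\|\Delta\Lf\,\N\Rf\| = \|\Delta\widetilde\Lf\,\widetilde\N\widetilde\Rf\|$. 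Hence $\mathbb V_\omega(\Lf,\N,\Rf)$ maps bijectively onto $\mathbb V_\omega(\widetilde\Lf,\widetilde\N,\widetilde\Rf)$ and the differentials transform by a constant Jacobian.

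Next, the key observation (anticipating Lemma \ref{retafu}) is that $\gammaL,\gammaR$ are chosen to depend only on the TT-singular values $\sigL,\sigR$ of $\tau_r(G)$, which are intrinsic to the tensor and hence coincide for $G$ and $\widetilde G$. Combining the above, I obtain
\begin{align*}
C_{B,P,\Lf,\N,\Rf}(\widetilde\N_{\mathrm{cand}}) \;=\; J\cdot C_{B,P,\widetilde\Lf,\widetilde\N,\widetilde\Rf}\bigl(T_1\widetilde\N_{\mathrm{cand}} T_2^{-1}\bigr)
\end{align*}
for a positive constant $J$, so the bijection $\widetilde\N\mapsto T_1\widetilde\N T_2^{-1}$ identifies the argmin sets of the two residual functions. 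Consequently, if $\N^+_G$ minimizes $C$ in the $G$-system, then $T_1\N^+_G T_2^{-1}$ minimizes $C$ in the $\widetilde G$-system, and the output tensors agree since
\begin{align*}
\widetilde\Lf\,\bigl(T_1\N^+_G T_2^{-1}\bigr)\,\widetilde\Rf \;=\; \Lf\, T_1^{-1}\,T_1\,\N^+_G\,T_2^{-1}\, T_2\,\Rf \;=\; \Lf\,\N^+_G\,\Rf.
\end{align*}

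Finally, possible non-uniqueness of the primary minimizer would be handled by the secondary criterion $\|\tau_r(\Lf,\widetilde\N,\Rf)\|_F$: this quantity depends only on the output tensor and is therefore preserved by the bijection above, so picking the norm-minimal minimizer yields the same tensor in both representations. The main obstacle is not the bookkeeping but securing that $\gammaL,\gammaR$ really are functions of the gauge-invariant singular spectrum rather than of the specific cores; once Lemma \ref{retafu} establishes this, everything else reduces to a transparent change-of-variables argument.
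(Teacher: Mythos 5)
Your proof is correct and follows essentially the same route as the paper's: the gauge transformation \eqref{equi}, the change of variables in the integral with constant Jacobian, the invariance of the constraint set $\mathbb{V}_\omega$, and the resulting identity $C(\cdot) = J\cdot \widetilde C(T_1\,\cdot\,T_2^{-1})$ intertwining the argmins. One small inaccuracy: $\gammaL,\gammaR$ depend on the ranks and mode sizes (the ``proportions'' of the representation, cf.\ Lemma \ref{retafu}), not on the singular values — but since these are likewise gauge-invariant, your argument goes through unchanged.
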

\begin{proof}
  Let $\N^+ := \argmin_{\widetilde{N}} C$, $C = C_{\B,\Ps,\Lf,\N,\Rf}(\widetilde{N})$ and 
  $\widehat{\N}^+ := \argmin_{\widetilde{N}} \widehat{C}$,\\$\widehat{C} = C_{\B,\Ps,\widehat{\Lf},\widehat{\N},\widehat{\Rf}}(\widetilde{N})$
  for representations $\tau_r(\Lf,\N,\Rf) = \tau_r(\widehat{\Lf},\widehat{\N},\widehat{\Rf})$ as well as 
  $\widehat{\mathbb{V}}_{\omega} = \mathbb{V}_{\omega}(\widehat{\Lf}, \widehat{\N}, \widehat{\Rf})$
  and $\mathbb{V}_{\omega} = \mathbb{V}_{\omega}(\Lf, \N, \Rf)$.
  According to \eqref{equi}, there exist two matrices $T_1,T_2$ such that
  \[(\Lf T_1, T_1^{-1} \N T_2, T_2^{-1} \Rf) = (\widehat{\Lf}, \widehat{\N}, \widehat{\Rf}).\]
  Hence
  \begin{align*}  \widehat{C}(\widetilde{N}) & = \int_{\widehat{\mathbb{V}}_{\omega}} \left\| (\Lf + \gammaL \Delta \widehat{\Lf} T_1^{-1}) T_1 \widetilde{\N} 
    T_2^{-1} (\Rf + \gammaR T_2 \Delta \widehat{\Rf}) - \B \right\|^2_{\Ps} \intd   \Delta \widehat{\Lf} \intd  \Delta \widehat{\N} \intd  \Delta \widehat{\Rf}, \\
    \mbox{with } \widehat{\mathbb{V}}_{\omega} & = \left\{(\Delta \widehat{\Lf},\Delta \widehat{\N},\Delta \widehat{\Rf}) \mid 
    \|\Delta \widehat{\Lf} T_1^{-1} \N \Rf\|^2_{\newwo{F}}
    + \|\Lf T_1^{-1} \Delta \widehat{\N} T_2 \Rf\|^2_{\newwo{F}} + \|\Lf \N T_2 \Delta \widehat{\Rf}\|^2_{\newwo{F}}\leq \omega^2 \right\}
  \end{align*}
  The substitution $(\Delta \widehat{\Lf},\Delta \widehat{\N},\Delta \widehat{\Rf}) \overset{\iota}{\rightarrow} (\Delta \Lf T_1, T_1^{-1} \Delta \N T_2, T_2^{-1} \Delta \Rf)$
  introduces a constant Jacobi Determinant $|\det(J_\iota)| = 1$. We obtain
  \begin{align*} \widehat{C}(\widetilde{N}) &
    := \int_{\mathbb{V}_{\omega}} \left\| (\Lf + \gammaL \Delta \Lf) \ (T_1 \widetilde{\N} T_2^{-1}) 
     (\Rf + \gammaR \Delta \Rf) - \B \right\|^2_{\Ps} \intd   \Delta \Lf \intd  \Delta \N \intd  \Delta \Rf \\
    & = C(T_1 \widetilde{\N} T_2^{-1})
  \end{align*}
  Hence $\widehat{\N}^+ = T_1^{-1} \N^+ T_2$. This is the same relation
  given for $\N$ and $\widehat{\N}$ and therefore $\tau_r(\Lf, \N^+, \Rf) = \tau_r(\widehat{\Lf},\widehat{\N}^+,\widehat{\Rf})$
  (which is a set equality if the minimizer is not assumed to be unique).
\end{proof}
%
\begin{corollary}[Integral over Kronecker product]\label{inovkrpr}
  Let $\omega_1 > 0$.\\Further, let $H \in \R^{(n_X n_Y) \times (n_X n_Y)}$ as well as $Y \in \R^{n_Y \times n_Y}$ be matrices and
  \[ V_{\omega_1}^{(n_X,m_X)} = \{ X \in \R^{n_X \times m_X} \mid \|X\|_F = \omega_1\}. \]
  Then
  \begin{align*} \int_{V_{\omega_1}^{(n_X,m_X)}} (X \kp Y)^T H (X \kp Y) \intd X = \frac{\omega_1^2 |V_{\omega_1}^{(n_X,m_X)}|}{n_X m_X} I_{m_X} \kp Y^T H^{\ast} Y \end{align*}
  for $H^{\ast} = \mathrm{tr}_1(H) \in \R^{n_Y \times n_Y}$.\footnote{$\mathrm{tr}_1$ is the partial trace, i.e. $(H^{\ast})_{i,j} = \mbox{tr}(h_{i,j}), \ H = \sum_{i,j} h_{i,j} \kp e_i e_j^T,\ h_{i,j} \in \R^{n_X \times n_X}$}
  For an analog $V_{\omega_2}^{(n_Y,m_Y)}$, $\omega_2 > 0$, we further have
  \begin{align*} \underset{V_{\omega_1}^{(n_X,m_X)}, V_{\omega_2}^{(n_Y,m_Y)}}{\iint} \kern-1em (X \kp Y)^T H (X \kp Y) \intd X \intd Y = \frac{\omega_1^2 \omega_2^2 |V_{\omega_1}^{(n_X,m_X)}||V_{\omega_2}^{(n_Y,m_Y)}|}{n_X m_X n_Y m_Y} tr(H) I_{m_X m_Y}. \end{align*}
\end{corollary}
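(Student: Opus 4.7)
The plan is to reduce the Kronecker-product integral to the scalar-matrix case handled by Lemma \ref{inovalva}. The decomposition $H = \sum_{i,j} h_{i,j} \kp e_i e_j^T$ (with $h_{i,j}\in\R^{n_X\times n_X}$) is exactly the ansatz that makes the mixed-product property
\[ (A \kp B)(C \kp D) = AC \kp BD \]
applicable: since $(X \kp Y)^T = X^T \kp Y^T$, I obtain
\[ (X \kp Y)^T H (X \kp Y) = \sum_{i,j} (X^T h_{i,j} X) \kp (Y^T e_i e_j^T Y). \]
Only the left factor in each summand depends on $X$, so by linearity I may pull the integral inside the sum and over the Kronecker product. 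Applying Lemma \ref{inovalva} with $H$ replaced by $h_{i,j}$ yields
\[ \int_{V_{\omega_1}^{(n_X,m_X)}} X^T h_{i,j} X \intd X = \frac{\omega_1^2 |V_{\omega_1}^{(n_X,m_X)}|}{n_X m_X} \mbox{tr}(h_{i,j}) I_{m_X}. \]

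Resumming gives
\[ \sum_{i,j} \frac{\omega_1^2 |V_{\omega_1}^{(n_X,m_X)}|}{n_X m_X} \mbox{tr}(h_{i,j}) I_{m_X} \kp Y^T e_i e_j^T Y = \frac{\omega_1^2 |V_{\omega_1}^{(n_X,m_X)}|}{n_X m_X} I_{m_X} \kp Y^T H^{\ast} Y, \]
where I used $\sum_{i,j} \mbox{tr}(h_{i,j}) e_i e_j^T = H^{\ast}$ by the very definition of $H^{\ast}$. This establishes the first identity.

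For the double integral I iterate: first integrate over $X$, which by what I just showed produces $\frac{\omega_1^2 |V_{\omega_1}^{(n_X,m_X)}|}{n_X m_X} I_{m_X} \kp Y^T H^{\ast} Y$; then integrate over $Y$, where a single application of Lemma \ref{inovalva} (now with $Y \in \R^{n_Y \times m_Y}$ and matrix $H^{\ast}\in\R^{n_Y\times n_Y}$) gives
\[ \int_{V_{\omega_2}^{(n_Y,m_Y)}} Y^T H^{\ast} Y \intd Y = \frac{\omega_2^2 |V_{\omega_2}^{(n_Y,m_Y)}|}{n_Y m_Y} \mbox{tr}(H^{\ast}) I_{m_Y}. \]
Combining via $I_{m_X} \kp I_{m_Y} = I_{m_X m_Y}$ yields the claimed prefactor.

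The only point that needs care is the identity $\mbox{tr}(H^{\ast}) = \mbox{tr}(H)$: with the Kronecker indexing $(h_{i,j} \kp e_i e_j^T)_{(a,\alpha),(b,\beta)} = (h_{i,j})_{a,b}\,\delta_{i,\alpha}\delta_{j,\beta}$, one reads off $H_{(a,\alpha),(b,\beta)} = (h_{\alpha,\beta})_{a,b}$, so $\mbox{tr}(H) = \sum_{a,\alpha} (h_{\alpha,\alpha})_{a,a} = \sum_\alpha \mbox{tr}(h_{\alpha,\alpha}) = \mbox{tr}(H^{\ast})$. This is the only step where the Kronecker indexing conventions have to be checked and is the main (mild) obstacle; everything else is a direct application of Lemma \ref{inovalva} together with the mixed-product property.
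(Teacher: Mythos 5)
Your proof is correct and follows exactly the route the paper indicates (the paper's own proof is a one-sentence sketch: split $H=\sum_{i,j}h_{i,j}\kp e_ie_j^T$ and apply Lemma \ref{inovalva} to each summand, separately for $X$ and $Y$). Your write-up simply fills in the details — the mixed-product identity, the resummation to $H^{\ast}$, and the check $\mbox{tr}(H^{\ast})=\mbox{tr}(H)$ — all of which are accurate.
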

\begin{proof}
  Using the splitting $H = \sum_{i,j} h_{i,j} \kp e_i e_j^T$, $h_{i,j} \in \R^{n_X \times n_X}$,
  Lemma \ref{inovalva} can be applied to each summand, separately for $X$ and $Y$.
\end{proof}
We now derive the minimizer of the variational residual function \eqref{corf}.
Due to Lemma \ref{in}, we can use the standard representation in form of Remark \ref{cofoofstre}
for simplification. In this case, $\mathbb{V}_{\omega}$ takes the convenient form
\begin{align} \label{convenient} \mathbb{V}_{\omega}(\fL, \ \SigL \ \fN \ \SigR, \ \fR) & = \{(\Delta \Lf,\Delta \N,\Delta \Rf) \mid 
  \|\Delta \Lf \SigL \|^2_{\newwo{F}} + \|\Delta \N \|^2_{\newwo{F}} + \|\SigR \Delta \Rf\|^2_{\newwo{F}}\leq \omega^2 \}.
\end{align}
\begin{theorem}[Minimizer of the ALS variational residual function]\label{miofthalsrerefu}
  Let $(\fL, \ \SigL ,\ \fN ,\ \SigR, \ \fR)$ be the standard representation \eqref{sr} for a tensor $A$.
  The minimizer $\N^+$ of the residual function $C_{\B,\Ps,\fL,\SigL \fN \SigR,\Rf}$ as in \eqref{corf} is given by
\begin{align}
\nonumber
 \N^+(j) & \ = \ \argmin_{\widetilde{N}(j)} \quad \underbrace{\|\ \fL \ \widetilde{N}(j) \ \fR - \B(j)\ \|^2_{\Ps(j)}}_{\mbox{standard ALS}} \quad + \underbrace{\quad  \Omega(j), \quad}_{\mbox{(regularization)}} j = 1,\ldots,\nN \\
 \label{Yj}
 \Omega(j) & \ = \ \left\{
  \begin{array}{rllr}
  \nL^{-1} \zeta_1 & \omega^2  \|\ \SigL^{-1}  & \widetilde{N}(j) & \fR_{:,\Ps(j)_2}\ \|^2_F \\
  + \ \nR^{-1} \zeta_2  & \omega^2 \|\ \fL_{\Ps(j)_1,:} & \widetilde{N}(j) & \SigR^{-1}\ \|^2_F \\
  + \ \newwo{|\Ps(j)| (\nR\nL)^{-1}} \zeta_{(1,2)}  & \omega^4 \|\ \SigL^{-1}  & \widetilde{N}(j) & \SigR^{-1}\ \|^2_F
\end{array} \right.
\end{align}
  with $\Ps(j)_u = (x^{(1)}_u,x^{(2)}_u,\ldots)$, \movedwo{$u = 1,2$,} for $\Ps(j) = \newwo{\{}x^{(1)},x^{(2)},\ldots\newwo{,x^{(|S(j)|)}\}} \subset \mathbb{N}^2$. 
  The constants $\zeta\oldwojg{, \rho}$ only depend
  on the proportions of the representation and sampling set (cf. Remark \ref{constants}) as well as
  the constant scalings $\gammaL$, $\gammaR$.
\end{theorem}
\oldjg{30,43}{
\begin{proof}
  The proof is rather technical and can be found in Appendix A.
\end{proof}}
\oldwo{This result may appear to be intricate. However, to calculate the minimizer is
of the same order (with near same constant)}
\newwo{The regularization term is more straightforward than it might appear. The computational complexity
for the calculation of the minimizer is of the same order (with near same constant)}
as for standard ALS, for which
the matrices $\fL_{\Ps(j)_1,:} \in \R^{\oldwojg{a_j}\newwo{|S(j)|} \times \rL}$ and $\fR_{:,\Ps(j)_2} \in \R^{\rR \times \oldwojg{a_j}\newwo{|S(j)|}}$ 
\oldwo{($a_j = |\{ p \mid p \in P,\ p_\mu = j\}|$)} are
required anyhow (for further explanation, see \eqref{fLPsj},\eqref{fRPsj}). The scalings $\nL^{-1},\nR^{-1}$ go in hand
with these to adjust the regularization terms to the magnitudes of the
corresponding shares of the sampling set and disappear\oldwo{, together with $\rho_{(1,2)}$,} for the approximation of a fully available tensor (Corollary \ref{fipr}).
The constants $\zeta$ are weighted according to the proportions of the left and right interface matrices.
For example, for $\mu = 1$ (the position of the core being updated), we have $\zeta_1 = 0$ and accordingly
for $\mu = 2$, $\zeta_1 \ll \zeta_2$. In practice, instead of evaluating the minimizer exactly, a few steps
(that is for very coarse tolerance) of a suitably computed, \textit{preconditioned cg} are performed as
described in Section \ref{CCG}. This can be achieved without changing results.

\begin{proof}\BCn{f3_}{f_}
We omit the scalings $\gammaL, \gammaR$ for simplicity since they only have to be carried along the lines.
We search for $\N^+ := \argmin_{\widetilde{\N}} C_{\B,\Ps,\fL,\SigL \fN \SigR,\fN}(\widetilde{\N})$. 
Substituting
\[ 
 (\Delta \Lf,\Delta \N,\Delta \Rf) \rightarrow (\Delta \fL \SigL^{-1}, \Delta \fN ,\SigR^{-1} \Delta \fR)
\]
we can (up to a constant factor) restate $C$ as
\begin{align}
C_{\B,\Ps,\fR,\SigL \fN \SigR,\fL}(\widetilde{\N}) & \propto \int_{\mathbb{V}_{\omega}} \|
(\fL + \Delta \fL \SigL^{-1}) \widetilde{\N} 
\nonumber (\fR + \SigR^{-1} \Delta \fR) - \B\|^2_{\Ps} \intd   \Delta \fL \intd \Delta \fN \intd \Delta \fR, \\
\mathbb{V}_{\omega} & = \{(\Delta \fL,\Delta \fN,\Delta \fR) \mid 
\|\Delta \fL \|^2 + \|\Delta \fN \|^2 + \| \Delta \fR\|^2\leq \omega^2 \}.
\end{align}
Each of the independent matrices of the minimizing core is restated as
\begin{align}
 \N^+(j) & = \argmin_{\widetilde{\N}(j)} \int_{\mathbb{V}_\omega} \| ((\fR + \SigR^{-1} \Delta \fR)^T \otimes_K (\fL + \Delta \fL \SigL^{-1})) \ \mbox{vec}(\widetilde{\N}(j)) \\
 & - \mbox{vec}(\B(j)) \|^2_{\mbox{vec}(\Ps(j))} \intd  \Delta \fL \intd \Delta \fN \intd \Delta \fR, 
\end{align}
where $\otimes_K$ is the matrix Kronecker product.
Let $j$ be arbitrary but fixed from now on.
For any $x$, it is $\|x\|_{\mbox{vec}(\Ps(j))} = \|H(j) x\|_{F} = x^T H(j) x$ for a diagonal, 
square matrix $H(j) \in \R^{|\Ind|/\nN \times |\Ind|/\nN}$ 
with $H(j)_{(s),(s)} = \delta_{s \in \Ps(j)}$ (hence $H(j)^2 = H(j)$).
Using the normal equation, we obtain $\N^+(j) = Y^{-1} b$, where
\begin{align*} Y = & \int_{\mathbb{V}_\omega} (\fR + \SigR^{-1} \Delta \fR) \otimes_K (\fL + \Delta \fL \SigL^{-1})^T
\\ & H(j) \ (\fR + \SigR^{-1} \Delta \fR)^T \otimes_K (\fL + \Delta \fL \SigL^{-1})
 \intd   \Delta \fL \intd \Delta \fN \intd \Delta \fR 
\end{align*}
and
\begin{align*} b = & \left( \int_{\mathbb{V}_\omega} (\fR + \SigR^{-1} \Delta \fR) \otimes_K  (\fL + \Delta \fL \SigL^{-1})^T \right) \\
 & \ H(j) \ \mbox{vec}(\B(j))
\intd   \Delta \fL \intd \Delta \fN \intd \Delta \fR .
\end{align*}
In both $Y$ and $b$, any perturbation that appears only one-sided vanishes due to symmetry of $\mathbb{V}_\omega$. 
Hence $b = |\mathbb{V}_\omega| \ {({\fR}^T \otimes_K \fL)_{\mbox{vec}(\Ps(j)),:}}^T \ \mbox{vec}(\B(j))_{\mbox{vec}(\Ps(j))}$
and for $\intd \delta := \intd \Delta \fL \intd \Delta \fN \intd \Delta \fR$
\begin{align*} Y = & \int_{\mathbb{V}_\omega} ({\fR}^T \otimes_K {\fL})^T \ H(j) \ ({\fR}^T \otimes_K {\fL}) \intd  \delta \\
 + & \int_{\mathbb{V}_\omega} ({\fR}^T \otimes_K \Delta {\fL} \SigL^{-1})^T \ H(j) \ ({\fR}^T \otimes_K \Delta {\fL} \SigL^{-1}) \intd  \delta \\
 + & \int_{\mathbb{V}_\omega} (\Delta {\fR}^T \SigR^{-1} \otimes_K {\fL})^T \ H(j) \ 
 (\Delta {\fR}^T \SigR^{-1} \otimes_K {\fL}) \intd  \delta \\
 + & \int_{\mathbb{V}_\omega} (\Delta {\fR}^T \SigR^{-1} \otimes_K \Delta {\fL} \SigL^{-1})^T \ H(j) \ 
 (\Delta {\fR}^T \SigR^{-1} \otimes_K \Delta {\fL} \SigL^{-1}) \intd  \delta
\end{align*}
Now, let $\ell = \#_{\fR},\ n = \#_\fN,\ k = \#_{\fL}$. Since $\mathbb{V}$ is a version of the $(\ell + n + k)$-sphere,
we can use the following integration formula: 
Let $f: \R^{n+m} \rightarrow \R^k$ be a sufficiently smooth function and $S^{v-1}_{\omega}$ be the $v$-sphere of radius $\omega$.
 Then
 \[ \int_{S^{n+m-1}_{\omega}} f(x_n,x_m) \intd x = \int_0^{\pi/2} \omega \int_{S^{n-1}_{\omega \sin(u)}} \int_{S^{m-1}_{\omega \cos(u)}} f(x_n,x_m) \intd  x_m \intd  x_n \intd  u. \]
We use it twice and thereby split the integral. For a function $f$ we then obtain
\begin{align*}
 & \int_{\mathbb{V}} f \intd  \delta = 
 \int_{\lambda = 0}^{\omega} \int_{S_{\lambda}^{n + \ell + k-1}} f \intd  \delta \intd \lambda =
 \int_{\lambda = 0}^{\omega} \lambda \int_{g = 0}^{\pi/2} \int_{S_{\lambda \sin(g)}^{n -1}} \int_{S_{\lambda \cos(g)}^{\ell + k-1}} f \intd  \delta \intd g \intd \lambda = \\
 & \int_{\lambda = 0}^{\omega} \lambda \int_{g = 0}^{\pi/2} \int_{S_{\lambda \sin(g)}^{n -1}} \lambda \cos(g) 
 \int_{u = 0}^{\pi/2} \int_{S_{\lambda \cos(g) \sin(u)}^{\ell-1}} \int_{S_{\lambda \cos(g) \cos(u)}^{k-1}} f \intd \Delta \fL \intd \Delta \fR  \intd u \intd \Delta \fN \intd g \intd \lambda 
\end{align*}
If $f$ is independent of $\Delta \fN$, this then simplifies to
 \begin{align*} 
 = \int_{\lambda = 0}^{\omega} \lambda^2 \int_{g = 0}^{\pi/2} |S_{\lambda \sin(g)}^{n-1}| \cos(g) 
 \int_{u = 0}^{\pi/2} \int_{S_{\lambda \cos(g) \sin(u)}^{\ell-1}} \int_{S_{\lambda \cos(g) \cos(u)}^{k-1}} f \intd \Delta \fL \intd \Delta \fR  \intd u \intd g \intd \lambda 
\end{align*}
We further use the identity (where the function $\Gamma(\cdot)$ is not to be confused with the given diagonal matrix $\Gamma$)
\[ \int_0^{\pi/2} \cos(x)^p \sin(x)^q \intd x = \frac{\Gamma((p+1)/2) \ \Gamma((q+1)/2)}{2 \Gamma((p+q+2)/2)} =: \nu(p,q) \]
We apply these and Corollary \ref{inovkrpr} for different $f = (X \otimes_K Y)^T  H(j) (X \otimes_K Y)$.
For $\delta_1, \delta_2 \in \{0,1\}$ we set $X$ as ${\fR}^T$ ($\delta_1 = 0$) or $\Delta {\fR}^T \SigR^{-1}$ ($\delta_1 = 1$)
and analogously $Y$ as $\fL$ ($\delta_2 = 0$) or $\Delta \fL \SigL^{-1}$ ($\delta_2 = 1$).
For the summands $Y(0,0) + Y(1,0) + Y(0,1) + Y(1,1) = Y$ this then yields
\begin{align*} Y(\delta_1,\delta_2) = &
\int_{\lambda = 0}^{\omega} \lambda^2 \int_{g = 0}^{\pi/2}
\cos(g) \frac{2 \pi^{n/2} (\lambda \sin(g))^{n-1}}{\Gamma(n/2)} \\
& \int_{u = 0}^{\pi/2}  \frac{2 \pi^{\ell/2} (\lambda \cos(g) \sin(u))^{\ell-1}}{\Gamma(\ell/2)} \left(\lambda^2 \cos(g)^2 \sin(u)^2 \right)^{\delta_1}  \\
& \frac{2 \pi^{k/2} (\lambda\cos(g) \cos(u))^{k-1}}{\Gamma(k/2)} \left(\lambda^2 \cos(g)^2 \cos(u)^2 \right)^{\delta_2} \intd  u \intd g \intd \lambda \cdot C_H({\delta_1},{\delta_2}) \\
= & \ c \cdot \int_{\lambda = 0}^{\omega} \lambda^{n+\ell+k-1 + 2 {\delta_1} + 2 {\delta_2}} \intd \lambda  \\
& \cdot \int_{g = 0}^{\pi/2} \cos(g)^{\ell+k-1 + 2 {\delta_1} + 2 {\delta_2}} \sin(g)^{n-1} \intd g \\
& \cdot \int_{u = 0}^{\pi/2} \cos(u)^{k-1 + 2 {\delta_2}} \sin(u)^{\ell-1 + 2 {\delta_1}} \intd u \cdot C_H({\delta_1},{\delta_2}) \\
= & \ c \frac{\omega^{n+\ell+k + 2 {\delta_1} + 2 {\delta_2}}}{n+\ell+k + 2 {\delta_1} + 2 {\delta_2}} \\
& \ \nu(\ell+k-1 + 2 {\delta_1} + 2 {\delta_2},n-1) \ \nu(k-1 + 2 {\delta_2},\ell-1 + 2 {\delta_1}) \ C_H({\delta_1},{\delta_2})
\end{align*}
for $c = \frac{8 \pi^{(n+k+\ell)/2}}{\Gamma(n/2)\Gamma(\ell/2)\Gamma(k/2)}$. The constant matrices $C_H$ are given by
\begin{align*}
 C_H(0,0) & = \widetilde{C}_H(0,0) = K(0,0)^T \ K(0,0), \ K(0,0) = ({\fR}^T \otimes_K \fL)_{\mbox{vec}(\Ps(j)),:} \\
 \nL \rL C_H(1,0) & = \widetilde{C}_H(1,0) = K(1,0)^T \ K(1,0), \ K(1,0) = {\fR_{:,\Ps(j)_2}}^T \otimes_K \SigL^{-1} \\
 \nR \rR C_H(0,1) & = \widetilde{C}_H(0,1) = K(0,1)^T \ K(0,1), \ K(0,1) = \SigR^{-1} \otimes_K \fL_{\Ps(j)_1,:} \\
 |S(j)|^{-1} \nL \nR \rL \rR C_H(1,1) & = \widetilde{C}_H(1,1) = K(1,1)^T \ K(1,1), \ K(1,1) = \SigR^{-1} \otimes_K \SigL^{-1}
\end{align*} 
Furthermore, it is $|\mathbb{V}_\omega| = c \frac{\omega^{n+\ell+k}}{n+\ell+k} \nu(\ell+k-1,n-1) \ \nu(k-1,\ell-1)$. 
Factoring out this base volume in $Y = |\mathbb{V}_\omega| \widetilde{Y}$
by using properties of the $\Gamma$ function, one derives:
\begin{alignat*}{3}
 \widetilde{Y}(0,0) & = \widetilde{C}_H(0,0),\quad  && \widetilde{Y}(1,0) = \nL^{-1} \zeta_{1} \omega^2 \widetilde{C}_H(1,0),\\
 \widetilde{Y}(0,1) & = \nR^{-1} \zeta_{2} \omega^2 \widetilde{C}_H(0,1),\quad && \widetilde{Y}(1,1) = |S(j)| \nL^{-1} \nR^{-1} \zeta_{(1,2)} \omega^4 \widetilde{C}_H(1,1),
\end{alignat*}
where the constants $\gammaL$ and $\gammaR$ have been added again. Restating the result
again as a least squares problem finishes the proof.
\end{proof}
\movedwo{%
\begin{remark}[Specification of constants]\label{constants}
 Let $\#_\fR := size(\fR),\ \#_\fN := size(\fN),\ \#_\fL := size(\fL)$ be the sizes of the tensor components.
  The constants in Theorem \ref{miofthalsrerefu} are given by \oldwo{$\rho_{(1,2)} = |\Ps(j)| \nL^{-1} \nR^{-1} = |\Ps(j)| (|\Ind|/\nN)^{-1}$
  and}
  \begin{align*}  \zeta_1 & = \gammaL^2 \frac{\#_\fL}{\rL (\#_\fL+\#_\fN+\#_\fR+2)}, \\
  \zeta_2 & = \gammaR^2 \frac{\#_\fR}{\rR (\#_\fL+\#_\fN+\#_\fR+2)}, \\   
    \zeta_{(1,2)} & = \gammaL^2 \gammaR^2 \frac{\#_\fR \#_\fL}{\rL \rR (\#_\fL+\#_\fN+\#_\fR+2)(\#_\fL+\#_\fN+\#_\fR+4)}.
  \end{align*}
\end{remark}}
\subsection{Evaluation with Coarse Conjugate Gradient}\label{CCG} %

\newwo{
For each slice $j$, the solution to the least squares problem in Theorem \ref{miofthalsrerefu} is described through the normal
equation  
\begin{align}
 Z(j)^T Z(j)\ \mathrm{vec}(N^+(j)) = Z(j)^T \begin{pmatrix} B(j)|_{\mathrm{vec}(S(j))} & 0\end{pmatrix} \label{normaleq}
\end{align}
with
\begin{align}
  Z(j) & := \begin{pmatrix} (\fR^T \otimes \fL)|_{\mathrm{vec}(S(j)),:} \\ Y(j) \end{pmatrix}, \quad
   Y(j) := \begin{pmatrix}
      \sqrt{\nL^{-1} \zeta_1} \ {\fR_{:,\Ps(j)_2}}^T \kp \omega \SigL^{-1} \\       
      \sqrt{\nR^{-1} \zeta_2} \ \omega \SigR^{-1} \kp \fL_{\Ps(j)_1,:} \\
      \sqrt{|\Ps(j)| \nR^{-1}\nL^{-1} \zeta_{(1,2)}} \ \omega \SigR^{-1} \kp \omega \SigL^{-1} \nonumber.
    \end{pmatrix}.
\end{align}}
\new{19,35}{In practice, since within each microstep we do not benefit from an exact solution of this transitory system,
 it is much more economic to perform a preconditioned conjugate gradient method and terminate when a coarse, relative tolerance (e.g. $\mathrm{tol} = 10^{-2}$) is reached.
This tolerance is empirically chosen such that the number of required cg steps is minimized,
however under the condition that the approximation quality does not notably change in either direction\BCn{e1_}{e_} --- such that neither loss of accuracy
nor additional regularization can be observed (cf. Section \ref{sec:seimannounraad}).
With the following consideration, we can construct a preconditioner (cf. \eqref{filterex}).}

\begin{corollary}[Filter properties]\label{fipr}
  For full sampling, $P = \Ind$, the update is given by the so called {\normalfont{filter}} (a diagonal matrix)
  \begin{align} 
    \nonumber \filter & := (I \kp I + \zeta_1 \cdot I \otimes \omega^2 \SigL^{-2} + \zeta_2 \cdot \omega^2 \SigR^{-2} \otimes I + \zeta_{(1,2)} \cdot \omega^2 \SigR^{-2} \otimes \omega^2 \SigL^{-2} )^{-1}, \\
    \label{Gamma}  \mathrm{vec} (\N^+(j)) & = \filter \ \mathrm{vec} ({\fL}^T \ \B(j) \ {\fR}^T).
  \end{align}
\end{corollary}
\begin{proof}
From $P = \Ind$, it follows that $\fR_{:,\Ps(j)_2}$ is an $n_L$-order copy of $\fR$ and $\fL_{\Ps(j)_1,:}$ is an $n_R$-order copy of $\fL$ (cf. \eqref{Yj}).
Hence the regularization terms are the same for all $j = 1,\ldots,\nN$. The minimizer $N^+(j)$ is
given by 
\[ (\newwo{Z(j)}^T \newwo{Z(j)})^{-1} \newwo{Z(j)}^T \begin{pmatrix} 
                     \mbox{vec}(\B(j)) \\
                     0 \\
                     \vdots
                    \end{pmatrix} \quad
\mbox{for}
\quad
\newwo{Z(j)} = \begin{pmatrix} 
\fR^T \otimes \fL \\
\begin{array}{c}
\sqrt{\nL^{-1} \zeta_1} \ \fR^T \otimes \omega \SigL^{-1} \\
\vdots 
\end{array} \\ 
\begin{array}{c}
\sqrt{\nR^{-1} \zeta_2} \ \omega \SigR^{-1} \otimes \fL \\
\vdots 
\end{array} \\ 
\sqrt{\zeta_{(1,2)}} \ \omega \SigR^{-1} \kp \omega \SigL^{-1}
\end{pmatrix}
\begin{matrix}
\vphantom{
\fR^T \otimes \fL} \\
\left. \vphantom{
\begin{array}{c}
\sqrt{\nL^{-1} \zeta_1} \ \fR^T \otimes \SigL^{-1} \\
\vdots 
\end{array} } \right\} n_L\mbox{-times} \\ 
\left.
\vphantom{
\begin{array}{c}
\sqrt{\nR^{-1} \zeta_2} \ \SigR^{-1} \otimes \fL \\
\vdots 
\end{array} } \right\} n_R\mbox{-times} \\ 
\vphantom{
\sqrt{\zeta_{(1,2)}} \ \SigR^{-1} \kp \SigL^{-1}
}
\end{matrix}.
\]
The factors $\sqrt{\nL^{-1}}$ and $\sqrt{\nR^{-1}}$
vanish in $\newwo{Z(j)}^T \newwo{Z(j)}$ due to the multiple rows involving the orthogonal matrices $\fR$ and $\fL$. Furthermore,
$(\newwo{Z(j)}^T \newwo{Z(j)})^{-1}$ is diagonal.
\end{proof}
\begin{remark}[Application of cg algorithm and order of computational complexity]\label{applofcg}
\oldjg{19}{In practice, since within each microstep we do not benefit by an exact solution of the transitory least squares problem as in Theorem \ref{miofthalsrerefu},
 it is much more economic to perform preconditioned cg steps and terminate when a coarse, relative tolerance (e.g. $\mathrm{tol} = 10^{-2}$) is reached. 
 We observe that
 \begin{align}
   \nL^{-1} \nR^{-1} |S(j)| \filter^{-1} & \approx (\fR^T \otimes \fL)|_{\mathrm{vec}(S(j)),:}^T (\fR^T \otimes \fL)|_{\mathrm{vec}(S(j)),:} + Y(j)^T Y(j), \label{precond} \\
   Y(j) & := \begin{pmatrix}
      \sqrt{\nL^{-1} \zeta_1} \ {\fR_{:,\Ps(j)_2}}^T \kp \omega \SigL^{-1} \\       
      \sqrt{\nR^{-1} \zeta_2} \ \omega \SigR^{-1} \kp \fL_{\Ps(j)_1,:} \\
      \sqrt{\rho_{(1,2)} \zeta_{(1,2)}} \ \omega \SigR^{-1} \kp \omega \SigL^{-1} \nonumber
    \end{pmatrix},
 \end{align}
 such that $\filter$ is an excellent preconditioner. Hence very few iterations (and at most $\rL \rR$) are sufficient. In that sense, the complexity 
  is reduced to $\mathcal{O}(\rL \rR |P|)$ without any noticeable loss of the final approximation quality.}
\new{19}{The matrix $\filter$ serves as excellent preconditioner in the sense that $\nL^{-1} \nR^{-1} |S(j)| \filter^{-1} \approx Z(j)^T Z(j)$,
 which holds as equality for $P = \mathcal{I}$ (as in the previous Corollary \ref{fipr}). 
 This relation can as well be quantified through upper bounds on the condition number of $Z(j) \filter^{1/2}$ as in Lemma \ref{siTRIP}
 by which we expect (and observe in practice) very few iterations (and at most $\rL \rR$) to be sufficient to reach a given coarse tolerance
 (for standard ALS, this is however not necessarily the case, cf. Lemma \ref{lik}).
 Each single cg step then has complexity $\mathcal{O}(\rL \rR |P|)$, while the full least squares problem has complexity $\mathcal{O}(\rL^2 \rR^2 |P|)$ (per slice)}.
\end{remark}
%
%
\subsection{Stability and Restricted Isometry Properties}
%
Before we derive the central theoretical statement of this paper, Theorem \ref{coofthfiadfmist}, some preparation is necessary.
The tensor restricted isometry property (e.g. \cite{RaScSt15_Ten}) does not hold for any non trivial sampling set $P \varsubsetneq \Ind$.
We however only need to work with a modified version as follows, in which
left and right interface matrices are fixed. Apart from that, the shape is exactly the same\BCn{s2}{s}. %

\begin{definition}[Internal tensor restricted isometry property (iTRIP)]\label{iTRIP}
  We say a rank $r$ tensor $A = \tau_r(\Lf, \N, \Rf)$ has the {\normalfont internal} tensor restricted isometry property for the sampling set $\Ps \newwo{= P_{(\mu)}}$\BCn{s1}{s},
  if there exist $0 \leq c < 1$ and $\rho > 0$ with
  \[ (1-c)\|\widetilde{A}\|^2_F \leq \rho \|\widetilde{A}\|^2_\Ps \leq (1+c) \|\widetilde{A}\|^2_F \]
  for all $\widetilde{A} \in \mathcal{A}(\Lf,\Rf) := \{\tau_r(\Lf, \widetilde{\N}, \Rf) \mid \widetilde{\N} \mbox{ arbitrary }\}$.
\end{definition}
\newwo{Given a tensor $A$, if the iTRIP does not hold, then we can not expect 
the next update to have good completion properties, since changes on the sampling subset are unrelated to changes on the whole space.
As indicated in Example \ref{inofallesqco}, if $c \rightarrow 1$, then the next iterate can be arbitrarily bad.}
Note that the constants \oldwo{are independent of the specific, chosen representation}\newwo{only depend on the tensor $A$, and not
on its representation,} and that this property is easy to check. 
In particular (as proven by Lemma \ref{siTRIP} for $\omega = 0$), the iTRIP with constant $c$ is equivalent to 
\[ \kappa_2(\diag( (\fR^T \otimes \fL)|_{\mathrm{vec}(S(1)),:},\ldots,(\fR^T \otimes \fL)|_{\mathrm{vec}(S(\nN)),:}))^2 \leq \frac{1+c}{1-c},\]
\newwo{where $(\fL, N, \fR)$ corresponds to the standard representation as in \eqref{csp}. Hence, the property}
\oldwo{and hence} holds as long as that matrix has full rank (where $\kappa_2$ is the condition number regarding the spectral norm $\|\cdot\|_2$)
or equivalently $\N \mapsto (\Lf \N \Rf)_\Ps$ is injective. Note that, by a slice wise consideration, the condition
number of a single $(\fR^T \otimes \fL)|_{\mathrm{vec}(S(j)),:}$ can be improved since different magnitudes
of sampling for each slice can be compensated (after all, the slices are solved independently).
\begin{lemma}[Likelihood of the iTRIP]\label{lik}
Let $\mathcal{T}_{\newwo{r}}$ be the subset of $3$ dimensional tensors with rank $r = (\rL,\rR)$.
Let $P$ be a (random) sampling that fulfills $|\Ps(j)| \geq \rL \rR$ 
for all $j = 1,\ldots,\nN$. \old{20}{Then almost every $A \in \mathcal{T}$ has the iTRIP.}
\new{20}{Then, for (only) almost every representation $(L,N,R) \in \R^{n_L \times r_\gamma} \times \R^{r_\gamma \times n_N \times r_\theta} 
\times \R^{r_\theta \times n_R}$ (with respect to the Lebesgue measure), the tensor $A = \tau_r(L,N,R) \in \mathcal{T}_{\newwo{r}}$ has the iTRIP.} \BCn{t1}{t}%
If for one $j$, $|\Ps(j)| < \rL \rR$, then 
no $A \in \mathcal{T}_{\newwo{r}}$ has the iTRIP.
\end{lemma}
\begin{proof}
A tensor $A = \tau_r(\Lf, \N, \Rf)$ has the iTRIP
(for some valid constants) if and only if the linear map $\N \mapsto (\Lf \N \Rf)_\Ps$ is injective, 
or equivalently, $(\Rf^T \kp \Lf)_{\mbox{vec}(\Ps(j)),:}$ has full rank for each $j$.
Due to the provided slice density of $P$, each matrix $(\fR^T \kp \fL)_{\mbox{vec}(\Ps(j)),:}$ is 
of size $|\Ps(j)| \times \rL \rR$. Hence generically, it is of full rank.
If $|\Ps(j)| < \rL \rR$, then the matrix cannot have full rank.
\end{proof}
Tensors themselves that do not have the iTRIP, assuming sufficient sampling, pose just a marginal phenomenon for high dimension $d$.
\oldwo{(in the matrix case for example, some columns or rows may indeed have very few samples).}
A quite simple construction however shows that the iTRIP does not behave well under perturbation: \BCn{m1}{m}%
\begin{lemma}[iTRIP under perturbation]\label{pertrip}
 Let $B \in \R^{n_L \times n_N \times n_R}$ with singular values $(\gamma^{(B)},\theta^{(B)})$.
 Assume further that for one $j$ it holds $|S(j)_1| < n_L$.
 Then for every $\sigma^{\ast} > 0$, there exists a tensor
 $A$ with rank $(r_\gamma,r_\theta)$ and $\|A - B\|^2_F \leq \sum_{i = r_\gamma}^\infty (\gamma^{(B)}_i)^2 + 2 \sum_{i = r_\theta+1}^\infty (\theta^{(B)}_i)^2 + (\sigma^{\ast})^2$ 
 such that $A$ does not have the iTRIP (and $\gamma^{(A)}_{r_\gamma} = \sigma^\ast$). If $B$ already has rank $(r_\gamma,r_\theta)$, then $\|A - B\|^2_F \leq (\gamma^{(B)}_{r_\gamma})^2 + (\sigma^{\ast})^2$ suffices.
\end{lemma}
\begin{proof}
 Truncation of $B$ yields a tensor $\widetilde{A}$ with rank $(\ast,r_\theta)$ and $\|\widetilde{A} - B\|^2_F \leq 
 \delta := \sum_{i = r_\theta+1}^\infty (\theta^{(B)}_i)^2$. The tensor $\widetilde{A}$ hence has perturbed
 singular values such that $\|\gamma^{(\widetilde{A})} - \gamma^{(B)}\|_2 \leq \delta$ (Mirsky's Theorem \cite{Mi1960_Sym}).
 Let $(\widetilde{\fL},\SigL^{(\widetilde{A})},\widetilde{\fN},\SigR^{(\widetilde{A})},\widetilde{\fR})$ be the standard representation of $\widetilde{A}$.
 Without loss of generality, we may assume that $j = 1$ and that only points in the first $k := |S(1)|$ rows of
 $B(1)$ are contained in the sampling $S(1)$. Let now
 \begin{align*}
  \widetilde{\fL}_{:,\{1,\ldots,r_\gamma\}} =: \begin{pmatrix} X & \widetilde{x} \\ Y & \widetilde{y} \end{pmatrix}, \quad \fL := \begin{pmatrix} X & x \\ Y & y \end{pmatrix}, \quad X \in \R^{k \times r_\gamma-1},
 \end{align*}
 If $X$ is already singular, then we may choose $A = \widetilde{A}$. Otherwise, then we may choose
 $x = \alpha X v$, $y = \alpha \widehat{y}$ for $\alpha = \|(X v;\widehat{y})\|_2^{-1}$ and $v = -(X^T X)^{-1} Y^T \widehat{y}$,
 for an arbitrary vector $\widehat{y} \neq 0$. 
 In all three cases, $\fL$ is orthogonal and for $A := \tau_r(\fL,\mathrm{diag}(\sigL^{(\widetilde{A})}_1,\ldots,\sigL^{(\widetilde{A})}_{r_\gamma-1},\sigma^{\ast})\widetilde{\fN} \SigR^{(\widetilde{A})},\widetilde{\fR})$ it holds
 $\|A - B\|_F \leq \|A - \widetilde{A}\|_F + \|\widetilde{A} - B\|_F \leq \sum_{i = r_\gamma}^\infty (\sigma^{(\widetilde{A})}_\gamma)_i^2
 + (\sigma^{\ast})^2 + \delta  \leq \sum_{i = r_\gamma}^\infty (\sigma^{(B)}_\gamma)_i^2 + \delta + (\sigma^{\ast})^2 + \delta$.
 Yet $(\fR^T \otimes \fL)|_{\mathrm{vec}(S(1)),:}$ is a singular matrix, since $\fL_{\{1,\ldots,k\},:}$ is already singular.
\end{proof}
The statement analogously holds true for $r_\theta$ and can easily be transferred to matrix completion as well.
Tensors that do not have the iTRIP are hence densely scattered depending on $\gamma^{(B)}_{r_\gamma}$, as are,
more importantly, surroundings in which the constant $c$ is close to $1$ and overfitting becomes more likely
\oldwo{If the iterate is close to such a tensor the likelihood grows to encounter overfitting} (cf. Example \ref{inofallesqco}).
\oldwo{but the regularization \eqref{Yj} already compensates this.}
\newwo{In case of the regularized update, the additional term $Y(j)$ \eqref{Yj} does not allow the condition number to change that easily (cf. Lemma \ref{siTRIPequiv}).}
\begin{lemma}[Partial matrix inverse by divergent parts]\label{pamainbydipa}
  We partition $\{1,\ldots,n\} = \omega_j \cup \omega_j^c$ $(\omega_j^c = \{1,\ldots,n\} \setminus \omega_j)$, $j = 1,2$
  and define
  $\Omega := \omega_1 \times \omega_2$, $\widetilde{\Omega} := \omega^c_1 \times \omega^c_2$.
  Let $\{A^{(k)}\}_k, \ \{J^{(k)}\}_k \subset \R^{n \times n}$ be series of symmetric matrices, $supp(J^{(k)}) \subset \Omega$. \\
  If $\mbox{lim}_{k \rightarrow \infty} A^{(k)}|_{\widetilde{\Omega}} = A|_{\widetilde{\Omega}}$, $A|_{\widetilde{\Omega}}$ s.p.d, and
  $\sigma_{\mathrm{min}}(J^{(k)}|_{\Omega}) \rightarrow \infty$, then
  $\newwo{V} := \mbox{lim}_{k \rightarrow \infty} (A^{(k)} + J^{(k)})^{-1}$ exists and we
  have $\newwo{V}|_{\widetilde{\Omega}} = (A|_{\widetilde{\Omega}})^{-1}$ and $\newwo{V}|_{\widetilde{\Omega}^c} = 0$ $(\widetilde{\Omega}^c = \{1,\ldots,n\}^2 \setminus \widetilde{\Omega})$.
\end{lemma}
\begin{proof}
  First, w.l.o.g., let $\Omega = \{m+1,\ldots,n\}^2$. Otherwise we can apply permutations. Further, let
  $\newwo{V}^{(k)} := A^{(k)} + J^{(k)}$. We partition our (symmetric) matrices $M$ for $M_{1,1} \in \R^{m \times m}$ block-wise as
  \[ M = \begin{pmatrix} M_{1,1} & M_{1,2} \\ M_{1,2}^T & M_{2,2} \end{pmatrix}. \]
  Note that $J^{(k)}_{1,1}, J^{(k)}_{1,2} \equiv 0$. Since $A^{(k)}_{1,1} = \newwo{V}^{(k)}_{1,1}$ and $A_{1,1} = A|_{\widetilde{\Omega}}$ is s.p.d,
  $A^{(k)}_{1,1}$ is invertible for all $k > K$ for some $K$ and hence $\mbox{lim}_{k \rightarrow \infty} (\newwo{V}^{(k)}_{1,1})^{-1} = A_{1,1}^{-1}$.
  Further, $\sigma_{\mathrm{min}}(B_{2,2}^{(k)}) > \sigma_{\mathrm{min}}(J_{2,2}^{(k)}) - \sigma_{\mbox{max}}(A_{2,2}^{(k)}) \rightarrow \infty$
  and hence $\|(\newwo{V}^{(k)}_{2,2})^{-1}\| \rightarrow 0$.
  Therefore, for $k > \widetilde{K}$ and $H^{(k)} := \newwo{V}^{(k)}_{1,1} - \newwo{V}^{(k)}_{1,2} (\newwo{V}^{(k)}_{2,2})^{-1} (\newwo{V}^{(k)}_{1,2})^T$, it is $\sigma_{\mathrm{min}}(H^{(k)}) > \sigma_{\mathrm{min}} (A_{1,1})/2$.
  By block-wise inversion of $\newwo{V}^{(k)}$, it then follows $((\newwo{V}^{(k)})^{-1})_{1,1} = (H^{(k)})^{-1} \rightarrow (A^{(k)}_{1,1})^{-1}$. Similarly, $((\newwo{V}^{(k)})^{-1})|_{\Omega} \rightarrow 0$.
\end{proof}
One last step remains, since we cannot allow $\zeta$ to depend on the rank $r$. 
For now, we redefine the method $\mathcal{M}^{\ast}$ 
to directly yield the result in Theorem \ref{miofthalsrerefu} for arbitrary constants $\zeta$, i.e.
\begin{align}
  \mathcal{M}_\zeta^{\ast}(\Lf,\ \N,\ \Rf) := (\fL,\ \N^+,\ \fR). \label{Mzeta} 
\end{align}
We explain in Section \ref{sec:backtod} and Lemma \ref{retafu} 
how the scalings $\gammaL,\gammaR$ as well as $\omega$ are used to obtain
one specific $\mathcal{M}_{\zeta}^{\ast}$ from $\mathcal{M}^{\ast}$, for which $
\zeta$ is indeed independent of $r$. \oldwo{It is easy to see that $\mathcal{M}_\zeta^{\ast}$ is no less
representation independent.}\BCn{u1}{u}%
\begin{theorem}[Stability of the method $\mathcal{M}_\zeta^{\ast}$]\label{coofthfiadfmist} 
  Let $\B$ be the target tensor, $\Ps \newwo{\ \subsetneq \mathcal{I}}$ the sampling set, arbitrary but fixed, and $\mathcal{M}_\zeta^{\ast}$ as in \eqref{Mzeta}.
  \begin{itemize}
  \item The regularized method $\mathcal{M}_\zeta^{\ast}$ $(\omega > 0)$ as defined by \eqref{Mzeta}  
  (for $\zeta_1,\zeta_2 \geq 0$ and $\zeta_{(1,2)} > 0$ that do not depend on $r$) is stable 
  at all points $A^{\ast}$ (and hence also \old{2}{fix-rank}\new{2}{fixed-rank} stable).
  \item The unregularized method $(\omega = 0)$ \eqref{unregM} provides stability only for fixed rank (cf. Example \ref{alsanadfarnocowitr}), and only at those points $A^{\ast}$  that have the iTRIP (cf. Def. \ref{iTRIP}).
  \end{itemize}  
\end{theorem}
\begin{proof}
  Let $A^{\ast}$ be a fixed tensor with TT-ranks $r^{\ast}$. \\
  \textit{1. \old{2}{Fix-rank}\new{2}{Fixed-rank} stability:} We first show that $\mathcal{M}^{\ast}$ is stable for fixed rank. Let $A_i$ be a sequence 
  with $\mbox{rank}(A_i) = r^{\ast}$ and $A_i \rightarrow A^{\ast}$. 
  Let $\mathcal{G}^{\ast} =(\fL^{\ast},\SigL^{\ast},\fN^{\ast},\SigR^{\ast},\fR^{\ast})$ be the standard representation of $A^{\ast}$ as well
  as $\mathcal{G}_i$ correspond to $A_i$. We partition the indices for $\sigL^{\ast}$ and $\sigR^{\ast}$ by $k$ and $\ell$ according to
  equality of entries, such that $\sigL^{\ast}_1 = \ldots = \sigL^{\ast}_{k_1} > \sigL^{\ast}_{k_1+1} = \ldots = \sigL^{\ast}_{k_2} > 
  \ldots > \sigL^{\ast}_{k_{K-1}+1} = \ldots = \sigL^{\ast}_{k_K} > 0$ and likewise for $\ell_1,\ldots,\ell_L$.
  Since $A_i \rightarrow A^{\ast}$, their singular values also converge (e.g. \cite{We1912_Das}).
  We can hence conclude from \cite{Do20_Ano,We72_Per} that there exist sequences of block diagonal, orthogonal matrices $W_i$ and $M_i$ with block sizes $k_1, k_2-k_1, \ldots, k_K-k_{K-1}$ and
  $\ell_1, \ell_2-\ell_1, \ldots, \ell_L-\ell_{L-1}$, respectively, such that
  \begin{align}
    \label{ast} \|\fL_i W_i - \fL^{\ast}\|_F \rightarrow 0 \quad \mbox{and} \quad \|M_i \fR_i - \fR^{\ast}\|_F \rightarrow 0, \quad 
  \end{align}
  since the standard representation includes left and right singular vectors. 
  We have to show that the tensors $\newwo{H_i} = \tau_r(\fL_i, \N_i, \fR_i) = \tau_r(\mathcal{M}^{\ast}(\fL_i, \SigL_i \fN_i \SigR_i, \fR_i))$ 
  converge to the analogously defined $\newwo{H^{\ast}}$.
  For fixed $j$, we define for each single $\mathcal{G}_i$ the matrix $Y_i = Y(j)$ (cf. Theorem \ref{miofthalsrerefu}, Remark \ref{normaleq})
  and $z_i := (\fR_i^T \kp \fL_i)$ such that
  \begin{align} N_i(j) & = \argmin_{\widetilde{N}(j)} \left\|\begin{pmatrix} (z_i)_{\mbox{vec}(\Ps(j)),:} \\
      Y_i \end{pmatrix} \mbox{vec}(\widetilde{N}(j)) - 
    \begin{pmatrix} \mbox{vec}(\B(j))|_{\mbox{vec}(\Ps(j))} \\ 0 \end{pmatrix} \right\|, \label{Ni} \\
    \mbox{vec} (\newwo{H_i}(j)) & = z_i \ \mbox{vec}(N_i(j)). \nonumber 
  \end{align}
  We define the shifted matrices
  \begin{align*} z^{M,W}_i & := (M_i \ \fR_i)^T \kp (\fL_i \ W_i) \\
    Y^{M,W}_i & := \begin{pmatrix}
      \sqrt{\oldwojg{\nu_{s-1} \zeta^{(\mu)}_1}\newwo{\nL^{-1} \zeta_1}} \ (M_i \ {\fR_{:,\Ps(j)_2}})^T \kp (\SigL_i^{-1} \ W_i) \\
      \sqrt{\oldwojg{\nu_{s} \zeta^{(\mu)}_2}\newwo{\nR^{-1} \zeta_2}} \ (\SigR_i^{-1} \ M_i^T) \kp (\fL_{\Ps(j)_1,:} \ W_i) \\
      \sqrt{\oldwojg{\nu_{s-1,s} \zeta^{(\mu)}_{(1,2)}}\newwo{|\Ps(j)| \nR^{-1}\nL^{-1} \zeta_{(1,2)}}} \ (\SigR_i^{-1} \ M_i^T) \kp (\SigL_i^{-1} \ W_i)
    \end{pmatrix} 
  \end{align*}
  Due to \eqref{ast}, it holds $(z^{M,W}_i)_{\mbox{vec}(\Ps(j)),:} \rightarrow z^{\ast}_{\mbox{vec}(\Ps(j)),:}$.
  Inserting $I = (M_i^T \kp W_i) (M_i^T \kp W_i)^T$ into \eqref{Ni}, we obtain
  \begin{align*}
    \mbox{vec} (\newwo{H_i}(j)) = z^{M,W}_i \ & \left( (z^{M,W}_i)_{\mbox{vec}(\Ps(j)),:}^T \ (z^{M,W}_i)_{\mbox{vec}(\Ps(j)),:} 
    + {Y^{M,W}_i}^T \ Y^{M,W}_i \right)^{-1} \\
    & \cdot (z^{M,W}_i)_{\mbox{vec}(\Ps(j)),:}^T \ \mbox{vec}(\B(j))|_{\mbox{vec}(\Ps(j))}.
  \end{align*}
  Since $W_i^T \SigL^{\ast} W_i = \SigL^{\ast}$ for all $i$, it follows $W_i^T \SigL_i W_i \rightarrow \SigL^{\ast}$.
  Likewise $M_i^T \SigR_i M_i \rightarrow \SigR^{\ast}$ and thereby also 
  ${Y^{M,W}_i}^T \ Y^{M,W}_i \rightarrow {{Y}^{\ast}}^T \ {Y}^{\ast}$. We treat the cases $\omega = 0$ and $\omega > 0$ separately: \\
  \textit{(i) $\omega = 0$}: In this case, $Y^{M,W}_i = 0 = Y^{\ast}$.
  If the iTRIP holds for $A^{\ast}$, then \\ $\sigma_{\mathrm{min}}(z^{\ast}_{\mbox{vec}(\Ps(j)),:}) > 0$ and therefore \\
  \[ \left( (z^{M,W}_i)_{\mbox{vec}(\Ps(j)),:}^T \ (z^{M,W}_i)_{\mbox{vec}(\Ps(j)),:}\right)^{-1} \rightarrow \left( (z^{\ast})_{\mbox{vec}(\Ps(j)),:}^T \ (z^{\ast})_{\mbox{vec}(\Ps(j)),:}\right)^{-1}. \]
  This directly yields convergence of $(\newwo{H_i}(j)) \rightarrow (\newwo{H^{\ast}}(j))$ since all involved factors converge. \\
  \textit{(ii) $\omega > 0$}: Here, we use that $\sigma_{\mathrm{min}}(Y^{\ast}) > 0$ and $\sigma_{\mathrm{min}}(z^{\ast}_{\mbox{vec}(\Ps(j)),:}) \geq 0$.
  We then obtain convergence since
  \begin{align*} 
    & \left( (z^{M,W}_i)_{\mbox{vec}(\Ps(j)),:}^T \ (z^{M,W}_i)_{\mbox{vec}(\Ps(j)),:} 
    + {Y^{M,W}_i}^T \ Y^{M,W}_i \right)^{-1} \\
    & \rightarrow  \left( (z^{\ast})_{\mbox{vec}(\Ps(j)),:}^T \ (z^{\ast})_{\mbox{vec}(\Ps(j)),:} 
    + {Y^{\ast}}^T \ Y^{\ast} \right)^{-1}.
  \end{align*}
  This proves \old{2}{fix-rank}\new{2}{fixed-rank} stability. \\ 
  \textit{2. Stability:} Let now $A_i$ have arbitrary ranks. Without loss of generality by consideration of a finite amount of infinite subsequences, 
  we can assume that $\mbox{rank}(A_i) \equiv r$ for all $i$. Then, since $TT(r^{\ast})$ is a manifold, it follows $\sigL \geq \sigL^{\ast}$ and $\sigR \geq \sigR^{\ast}$.
  We can therefore have singular values $(\sigL_i)_{k_K+1},\ldots,(\sigL_i)_{k_{K+1}} \rightarrow 0$ 
  as well as $(\sigR_i)_{\ell_L+1},\ldots,(\sigR_i)_{\ell_{L+1}} \rightarrow 0$.
  We expand the matrices $W_i$ and $M_i$ by identities of appropriate sizes
  to account for the vanishing singular values: $W_i \leftarrow diag(W_i,I_{k_{K+1} - k_K})$,
  $M_i \leftarrow diag(M_i,I_{\ell_{L+1} - \ell_L})$.
  In regard of Proposition \ref{pamainbydipa}, let $\Omega$ be the smallest cross product set, such that 
  $({Y^{M,W}_i}^T \ Y^{M,W}_i)|_{\widetilde{\Omega}}$ converges (which is the set that corresponds to vanishing singular values). Then, due to the
  definition of $Y^{M,W}_i$, $\sigma_{\mathrm{min}}(({Y^{M,W}_i}^T \ Y^{M,W}_i)|_{\Omega}) \rightarrow \infty$.
  We can conclude that 
  \begin{align*} 
    & \left.\left(\left( (z^{M,W}_i)_{\mbox{vec}(\Ps(j)),:}^T \ (z^{M,W}_i)_{\mbox{vec}(\Ps(j)),:} 
    + {Y^{M,W}_i}^T \ Y^{M,W}_i \right)^{-1}\right)\right|_{\widetilde{\Omega}} \\
    & \rightarrow  \left( (z^{\ast})_{\mbox{vec}(\Ps(j)),:}^T \ (z^{\ast})_{\mbox{vec}(\Ps(j)),:} 
    + {Y^{\ast}}^T \ Y^{\ast} \right)^{-1}.
  \end{align*}
  and 
  \begin{align*} 
    & \left.\left(\left( (z^{M,W}_i)_{\mbox{vec}(\Ps(j)),:}^T \ (z^{M,W}_i)_{\mbox{vec}(\Ps(j)),:} 
    + {Y^{M,W}_i}^T \ Y^{M,W}_i \right)^{-1}\right)\right|_{\widetilde{\Omega}^c} \rightarrow 0.
  \end{align*}
  Because of this restriction, we in turn again get convergence to the limit $(\newwo{H_i}(j)) \rightarrow (\newwo{H^{\ast}}(j))$,
  since all parts that correspond to vanishing singular values, also vanish within the update. This finishes the proof.
\end{proof}

\begin{definition}[Stabilized internal tensor restricted isometry property (siTRIP)]\label{siTRIP}
  We say a rank $r$ tensor $A = \tau_r(\Lf, \N, \Rf)$ has the stable internal tensor restricted isometry property for the sampling set $\Ps$,
  if there exist $0 \leq c < 1$ and $\rho > 0$ such that for all $\widetilde{N}$ holds
  \begin{align} (1-c) \int_{\mathcal{V}_\omega(\Lf,\N,\Rf)} \|\widetilde{A}_\Delta\|^2_F \intd \Delta
  \leq \rho \int_{\mathcal{V}_\omega(\Lf,\N,\Rf)} \|\widetilde{A}_\Delta\|^2_{\Ps}  \intd \Delta 
  \leq (1+c) \int_{\mathcal{V}_\omega(\Lf,\N,\Rf)} \|\widetilde{A}_\Delta\|^2_F  \intd \Delta \label{siTRIPineq} \end{align}
  where $\widetilde{A}_\Delta := \tau_r(\Lf+\Delta \Lf, \widetilde{\N} + \Delta \N, \Rf + \Delta \Rf)|^2_F$
  and $\intd \Delta = \intd \Delta \Lf \intd \Delta \N \intd \Delta \Rf$.
\end{definition}
The constants are independent of the specific representation (cf. Lemma \ref{in}).
\begin{lemma}\label{siTRIPequiv}
Let $Z(j) = \begin{pmatrix} (\fR^T \otimes \fL)|_{\mathrm{vec}(S(j)),:} \\ Y(j) \end{pmatrix}$ as in \eqref{normaleq}.
 The siTRIP with constant $c$ for $A \in \R^{\Ind}$ is equivalent to
 \[ \exists \ c>0: \quad\kappa_2(\diag(Z(1) \filter^{1/2},\ldots,Z(\nN) \filter^{1/2}) ^2 \leq \frac{1 + c}{1 - c}, \]
 where $A = \tau_r(\fL,\N,\fR)$ is its standard representation and $\filter$ is as in Lemma \ref{fipr}.
\end{lemma}
\begin{proof}
Let $Z = \diag( Z(1),\ldots,Z(\nN) )$ and $\mathrm{vec}(\widetilde{\N})^T = (\mathrm{vec}(N(1))^T,\ldots,\mathrm{vec}(N(\nN))^T)$.
 We abbreviate the siTRIP \eqref{siTRIPineq} as $(1-c) \beta \leq \rho \xi \leq (1+c) \beta$ ($\beta = \beta(\widetilde{\N}), \ \xi = \xi(\widetilde{\N})$).
 Let $a:= \max_{x \neq 0} \frac{\|Z x\|^2}{\|(I \otimes \mathcal{F}^{-1/2}) x \|^2}$ and $b:=\min_{x \neq 0} \frac{\|Z x\|^2}{\|(I \otimes \mathcal{F}^{-1/2}) x \|^2}$.
 Since the perturbation $\Delta \N$ is independent of $\widetilde{N}$, this term can be neglected in consideration of that $\|\widetilde{N}\|$
 is not bounded. With $P = \Ind$ it holds $\beta = \sum_{j=1}^{\nN} \mathrm{vec}(\widetilde{\N}(j))^T \filter^{-1} \mathrm{vec}(\widetilde{\N}(j)) = 
 \|(I \otimes \mathcal{F}^{-1/2}) \mathrm{vec}(\widetilde{\N})\|^2_F$ (cf. proof of Theorem \ref{miofthalsrerefu}). For the actual sampling $P$,
 we have $\xi = \| Z \mathrm{vec}(\widetilde{\N}) \|^2_2$. Thereby $a = \max_{\widetilde{\N} \mid \beta = 1} \xi$
 and $b = \min_{\widetilde{\N} \mid \beta = 1} \xi$. Now, given the siTRIP, it follows
 \[ \mathrm{cond} (Z (I \otimes \mathcal{F}^{1/2}))^2 = \frac{a}{b} \leq \frac{\rho^{-1} (1+c)}{\rho^{-1} (1-c)} = \frac{1+c}{1-c}. \]
 For the opposite implication, define $\rho = \frac{1-c}{b}$. Then
 \[ \rho \xi \leq a \frac{1-c}{b} \beta \leq (1+c) \beta \quad \mbox{and} \quad \rho \xi \geq b \frac{1-c}{b} \beta = (1-c) \beta. \] 
\end{proof}
The siTRIP holds for \textit{every} tensor (possibly with $c$ close to $1$). For $\omega \rightarrow 0$, the constant $c$ converges
to the one of the iTRIP and for $\omega \rightarrow \infty$, \oldwo{$c \rightarrow 1$}\newwo{$c \rightarrow 0$}.
As well as for Definition \ref{iTRIP}, a slice wise consideration yields a better condition number for just $Z(j) \filter^{1/2}$. 
Note that this value not only \oldwo{determines}\newwo{bounds} the number of steps required for the cg method, but appears to be important for the reconstruction
quality \oldwo{of the tensor}\newwo{obtained through one microstep}. \newwo{However, a further investigation into the siTRIP, how exactly it behaves under perturbation
and if it requires modifications, is a matter of future research.}\\\\
\newwo{
As for the matrix case, we have to limit the singular values from below by a decreasing value proportional to the current residual, cf. Section \ref{sec:filter}. This leads to a slight complication, which
is resolved through the following, simple corollary to Theorem \ref{miofthalsrerefu}.
\begin{corollary}
 Let $A$ and $\widetilde{A}$ be tensors with standard representations $(\fL,\Gamma,\fN,\Theta,\fR)$
 and $(\fL,\Gamma,\widetilde{\fN},\Theta,\fR)$, respectively. Then both yield the same update $N^+$.
\end{corollary}
Hence, if we want to modify the singular values $\gamma$ and $\theta$, we may do so without 
knowledge about an appropriate core $\widetilde{\fN}$ (for example in the sense of some unknown best approximation).
In the least squares problem in Theorem \ref{miofthalsrerefu}, we therefor
simply set $\gamma_i := \max(\gamma_i,\sigma_{\mathrm{min}})$, $i = 1,\ldots,r_{\mu-1}$ and 
$\theta_i := \max(\theta_i,\sigma_{\mathrm{min}})$, $i = 1,\ldots,r_{\mu}$ (and can
thereby also ignore that the combination of the new $\gamma$ and $\theta$ might not be feasible, cf. \cite{GrKrxx_The}} 

%
\section{Behavior of the SALSA Filter}\label{sec:filter}\BCn{j1}{j}
A deeper understanding of the regularization utilized by SALSA and the reason for the lower bound $\sigma_{\min}$ is provided by the \textit{filter} as indicated
by \eqref{filterex} for the matrix case and as defined by Corollary \ref{fipr} for tensors.
Throughout this section, we assume that the sampling is such that \newwo{for}
the minimizer in Theorem \ref{miofthalsrerefu} \oldwo{is basically equal to}\newwo{it (approximately) holds}
\begin{align} 
  \mathrm{vec}(\N^+(j)) & = \filter \ \mathrm{vec}( ({\fL}^T \ \B(j) \ {\fR}^T) ) \label{filtermin},
\end{align}
which \oldwo{at last holds}\newwo{is true at last} for $P = \Ind$ (cf. Corollary \ref{fipr}).
Since $\zeta_1 \zeta_2 = \zeta_{(1,2)}$ (see the later equation \eqref{zetadef}), we can rewrite
\begin{align*} 
  \N^+ & = D_{\omega^2\zeta_1}({\SigL}) \ ({\fL}^T \ \B \ {\fR}^T) \ D_{\omega^2\zeta_2}({\SigR}) \\
  D_c({\Sigma}) & := (I + c \Sigma^{-2})^{-1}.
\end{align*}
We are interested in the fixpoints of this update, i.e. we postulate
$\N^+ = \SigL \ \fN \ \SigR$. Then, since $\rhb{\fN \ \SigR}$ is row-orthogonal (cf. Lemma \ref{stre}),
it holds
\begin{align}
 \label{fixp} D_{\omega^2\zeta_1}(\SigL) \ Z & = \SigL, \\
 \nonumber Z & = \rhb{({\fL}^T \ \B \ {\fR}^T) \ D_{\omega^2\zeta_2}(\SigR)} \ \rhb{\fN \ \SigR}^T,
\end{align}
where $Z =: \diag(\sigma^{(Z)})$ is necessarily a diagonal matrix (certainly, an analogous argument holds for $\SigR$ as well).
Because \eqref{fixp} can only hold if $d_{\sigma^{(Z)},\zeta_1}(\sigL_i) = \sigL_i$ for all $i$, 
the focus of our analysis lies on the fixpoints of the function $d_{\sigma^{(Z)},c}: \sigma \mapsto (1 + c \sigma^{-2})^{-1} \sigma^{(Z)}$. 
For each pair $(\sigma^{(Z)},c)$, the only attractive fixpoint (if existent) is given by $f_{\mathrm{stab}} = \frac{1}{2} \sigma^{(Z)} + \frac{1}{2} \sqrt{(\sigma^{(Z)})^2 - 4 c}$
and the repelling one by $f_{\mathrm{rep}} = \frac{1}{2} \sigma^{(Z)} - \frac{1}{2} \sqrt{(\sigma^{(Z)})^2 - 4 c}$.
At the point where $f_{\mathrm{stab}} = f_{\mathrm{rep}}$, it holds $\sigma = \sqrt{c} = \frac{1}{2} \sigma^{(Z)}$. The minimal value which
the term $(1 + c \sigma^{-2})^{-1}$ can hence take in any attractive fixpoint, is $F = 1/2$. This
behavior is shown in Figure \ref{filter_behaviour}. 
\begin{figure}
  \begin{center}
      
      \ifuseprecompiled
      \includegraphics[width=0.98\textwidth]{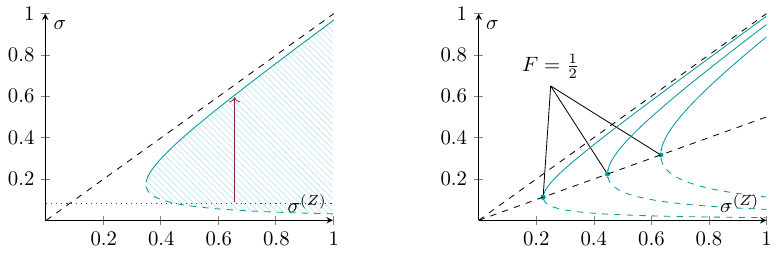}
      \else
      \setlength\figureheight{3.5cm}
      \setlength\figurewidth{0.9\linewidth}
      \tikzsetnextfilename{filter_fixpoints}
      \subimport{tikz_base_files/filter_fixpoints/}{filter_fixpoints.tex}
      \fi
  \end{center}
  \caption{\label{filter_behaviour} Left: Plotted are the fixpoints (continuous for attractive, dashed for repelling ones, in teal) 
  of $d_{\sigma^{(Z)},c}$ for one fixed $c$ with respect to $\sigma^{(Z)}$. Within the hatched area, singular values rise until
  they reach the upper boundary. A lower limit to the singular values is indicated as dotted, magenta line. 
  Right: Different values of $c$ are considered. The turning point 
  $\sigma = c = \frac{1}{2} \sigma^{(Z)}$ corresponds to a filter value of $1/2$. }
\end{figure}
\oldwojg{The relation to the filter is given by
\begin{align*}
\filter_{i,1} & = (D_{\omega^2\zeta_1}({\SigL}))_{i,i} \cdot \underbrace{(D_{\omega^2\zeta_2}({\SigR}))_{1,1}}_{\approx 1}. 
\end{align*}}%
A stabilized singular value corresponds to some attractive fixpoint of $d_{\sigma^{(Z)},c}$ \newwo{(cf. Definition \ref{virtstabsv}),
and it necessarily holds $(D_{\omega^2\zeta_1}({\SigL}))_{i,i} > 0.5 \Leftrightarrow \sigL_i > \omega \sqrt{\zeta_1}$.}\oldwo{Therefore, it 
necessarily holds $(D_{\omega^2\zeta_1}({\SigL}))_{i,i} > 0.5$. In practice, the value $\lowfil_{\mathrm{stab}}$ should be chosen larger,
as well as $\lowfil_{\mathrm{virt}}$ lower, not only to reduce the computational cost, but also to avoid premature reactions within the optimization.
Since the singular values $\SigL$ take part in another, neighboring microstep as well,
the accordant value is also taken into account (cf. \ref{Gamma}). \\}
\oldwo{It is now easy to understand why a lower limit to all singular values is required.}\newwo{This explains why the lower limit $\sigma_{\min}$ is 
necessary.} As displayed in Figure \ref{filter_behaviour} (left),
for any fixed $\sigma^{(Z)}$, a singular value $\sigma$ must be above a certain threshold (that corresponds to the repelling fixpoint) to
be increased by \oldwo{an accordant. So we cannot allow it to converge to zero.}\newwo{the microstep. It therefore must not converge to zero.} 
\section{Results Transferred Back to a d-Dimensional Tensor}\label{sec:backtod}
In this Section, we \oldwo{return to a $d$-dimensional tensor}\newwo{transfer the previous results for $S = P_{(\mu)}$ and $B = M_{(\mu)}$
to the $d$-dimensional setting}. 
In Remark \ref{constants}, we have
$\#_\fR = size(\fR) = \rR \prod_{i=s+1}^{d} n_i$, $\#_\fN = size(\fN) = \rL n_s \rR$, $\#_\fL = size(\fL) = \rL \prod_{i=1}^{s-1} n_i$. 
By combining modes (cf. Definition \ref{reduction}), the \textit{sizes} of the left as well as right side
have been \oldwo{drastically overrated and }distorted,
considering that the degrees of freedom of $\fL = \mathcal{G}^{<s}$ and $\fR = \mathcal{G}^{>s}$
are given by a sum, not a product, of the degrees of freedom of the single modes \newwo{(ignoring minor gauge conditions)}. 
We choose one of the few remaining options through which the method becomes stable.
We artificially set 
\[ \#_\fR \leftarrow r_\mu \sum_{i=\mu+1}^{d} n_i, \quad \#_\fL \leftarrow r_{\mu-1} \sum_{i=1}^{\mu-1} n_i \]
using appropriate scalings $\gammaL = \gammaL^{(\mu)}$, $\gammaR = \gammaR^{(\mu)}$ (differently for each mode $\mu$).
Otherwise, we will not obtain a stable microstep. 
Furthermore, the near common parts of the denominators, $\#_\fR + \#_\fN + \#_\fR + 2 (+2)$, can be incorporated into $\omega^2$, 
so we omit them in the following sense:
\begin{lemma}[Rescaled target function]\label{retafu}
  The previously discussed rescaling is achieved by choosing
  \begin{align*} (\gammaL^{(\mu)})^2 & = E \frac{\sum_{s=1}^{\mu-1} n_s}{\left(\prod_{s=1}^{\mu-1} n_s \right) \sum_{s=1}^{d} n_s}, 
    \quad (\gammaR^{(\mu)})^2 = E \frac{\sum_{s=\mu+1}^{d} n_s}{\left( \prod_{s=\mu+1}^{d} n_s \right) \sum_{s=1}^{d} n_s}, \\
    E & = r_\mu \prod_{s=\mu+1}^{d} n_s +  r_{\mu-1} n_\mu r_\mu + r_{\mu-1} \prod_{s=1}^{\mu-1} n_s
  \end{align*}
  Thereby,
  \begin{align}
    \label{zetadef} \zeta^{(\mu)}_1 = \frac{\sum_{s=1}^{\mu-1} n_s}{\sum_{s=1}^{d} n_s}, \quad \zeta^{(\mu)}_2 = \frac{\sum_{s=\mu+1}^{d} n_s}{\sum_{s=1}^{d} n_s}, \quad 
   \zeta^{(\mu)}_{(1,2)} = \zeta^{(\mu)}_1 \zeta^{(\mu)}_2 (1 + \mathcal{O}(E^{-1})).
  \end{align}
\end{lemma}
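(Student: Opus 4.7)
The plan is to verify the three identities in \eqref{zetadef} by direct substitution of the proposed scalings $\gammaL^{(\mu)}$ and $\gammaR^{(\mu)}$ into the closed forms for $\zeta_1,\zeta_2,\zeta_{1,2}$ given in Remark \ref{constants}. First I would recall the original component sizes from the reduction to three dimensions, namely $\#_\fR = r_\mu \prod_{i=\mu+1}^{d} n_i$, $\#_\fN = r_{\mu-1} n_\mu r_\mu$, and $\#_\fL = r_{\mu-1} \prod_{i=1}^{\mu-1} n_i$, so that $E = \#_\fL + \#_\fN + \#_\fR$ exactly matches the sum appearing in the denominators (up to the $+2$ offset).

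For $\zeta^{(\mu)}_2$, I would insert $(\gammaR^{(\mu)})^2$ into $\omega^2\gammaR^2 \#_\fR/(\rR (E+2))$. The factor $\rR = r_\mu$ cancels against the $r_\mu$ in $\#_\fR = r_\mu \prod_{s>\mu}n_s$, and the product $\prod_{s>\mu} n_s$ appearing there cancels the same product in the denominator of $(\gammaR^{(\mu)})^2$; what remains is $\omega^2 \cdot \frac{E}{E+2} \cdot \frac{\sum_{s>\mu} n_s}{\sum_s n_s}$. The calculation for $\zeta^{(\mu)}_1$ is symmetric and yields $\omega^2 \cdot \frac{E}{E+2} \cdot \frac{\sum_{s<\mu} n_s}{\sum_s n_s}$.

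For $\zeta^{(\mu)}_{1,2}$, I would simply multiply out $\omega^4 (\gammaL^{(\mu)})^2 (\gammaR^{(\mu)})^2 \#_\fR \#_\fL / (\rL \rR (E+2)(E+4))$. The products and ranks cancel as before, and what remains is
\[
\omega^4 \cdot \frac{E^2}{(E+2)(E+4)} \cdot \frac{\sum_{s<\mu} n_s \, \sum_{s>\mu} n_s}{\left(\sum_{s} n_s\right)^2} \;=\; \zeta^{(\mu)}_1 \zeta^{(\mu)}_2 \cdot \frac{E+2}{E+4},
\]
and since $(E+2)/(E+4) = 1 - 2/(E+4) = 1 + \mathcal{O}(E^{-1})$, the claimed relation follows.

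The only genuine subtlety is pinning down the meaning of ``incorporating the near common parts of the denominator into $\omega^2$'': after substituting the scalings, the computed $\zeta^{(\mu)}_1,\zeta^{(\mu)}_2$ carry an overall factor $E/(E+2)$ relative to the Lemma's statement, so one redefines the free regularization parameter by $\omega^2 \leftarrow \omega^2 \cdot E/(E+2)$ to obtain \eqref{zetadef} in clean form; the residual correction $(E+2)/(E+4)$ for the mixed term is then what is captured by the $\mathcal{O}(E^{-1})$ notation. No analytic obstacle arises — the proof is essentially bookkeeping — but care is needed to keep the three pieces $\#_\fL, \#_\fN, \#_\fR$ distinct from their artificial replacements throughout the substitution.
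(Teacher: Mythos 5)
Your proposal is correct and follows essentially the same route as the paper's own proof: direct substitution of the scalings into the formulas of Remark \ref{constants}, cancellation of the ranks and products, and absorption of the residual $E/(E+2)$ factor into $\omega^2$. The only cosmetic difference is that you carry the $+2$ and $+4$ offsets explicitly and land on the correction $(E+2)/(E+4)$ for the mixed term, whereas the paper absorbs the offsets first and writes $E/(E+2)$; both are $1+\mathcal{O}(E^{-1})$, so the statement is unaffected.
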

\begin{proof}\BCn{f4_}{f_}%
  \oldwo{See appendix A.}\newwo{First, 
$\zeta^{(\mu)}_1 = \gammaL^2 \frac{\prod_{s=1}^{\mu-1} n_s}{E} = \frac{\sum_{s=1}^{\mu-1} n_s}{\sum_{s=1}^{d} n_s}$,
with an analog result for $\zeta^{(\mu)}_2$. For the mixed term, we have
$\zeta^{(\mu)}_{(1,2)} = \zeta^{(\mu)}_{1} \zeta^{(\mu)}_{2} \frac{E}{E+2} = \zeta^{(\mu)}_{1} \zeta^{(\mu)}_{2} (1 - \frac{2}{E+2})$.}
\end{proof}
The value $E^{-1}$ is in general far below machine accuracy, such that we (from now on) ignore the factor $(1 + \mathcal{O}(E^{-1}))$.
There might be a more suitable realization of this result and
it should be remarked that the exact scalings are not important for the validity of Theorem \ref{coofthfiadfmist}. \BCn{o3}{o}%
In this context, for fixed $\mu$, the matrices $\fL_{\Ps(j)_1,\cdot\cdot} \in \R^{\oldwojg{a_j}\newwo{|P_{(\mu)}(j)|} \times \rL}$ and $\fR_{:,\Ps(j)_2} \in \R^{\rR \times \oldwojg{a_j}\newwo{|P_{(\mu)}(j)|}}$ 
\oldwo{, $a_j = |\{ p \mid p \in P,\ p_\mu = j\}| = |P_{(\mu)}(j)|$} (cf. \eqref{rrpex}), are given by 
\begin{align}
  \left(  \fL_{\Ps(j)_1,\cdot\cdot}  \right)_{\ell,\cdot\cdot} & = G_1(p^{(i_\ell)}_1) \cdot \ldots \cdot \ G_{\mu-1}(p^{(i_\ell)}_{\mu-1}) \label{fLPsj} \newwo{\ = G_{1,\ldots,\mu-1}((p^{(i_\ell)}_1,\ldots,p^{(i_\ell)}_{\mu-1}))}, \\
  \left(  \fR_{:,\Ps(j)_2}  \right)_{:,\ell} & = G_{\mu+1}(p^{(i_\ell)}_{\mu+1}) \cdot \ldots \cdot \ G_{d}(p^{(i_\ell)}_{d}) \label{fRPsj} \newwo{\ = G_{\mu+1,\ldots,d}((p^{(i_\ell)}_{\mu+1},\ldots,p^{(i_\ell)}_d))},
\end{align}
\movedwo{for $p^{(i_\ell)} \in P_{(\mu)}(j)$, $\ell = 1,\ldots,\oldwo{a_j}\newwo{|P_{(\mu)}(j)|}$} \newwo{and} a representation $G$ for which $\fL = G^{<s}$ and $\fR = G^{>s}$ \newwo{(cf. \eqref{superdupercores})}.
\begin{remark}[Case $\mu = 1,d$]\label{casemu1d}
  For $\mu = 1,d$ in Theorem \ref{miofthalsrerefu}, the same formula can be used by formally setting $G^{<1} = \fL = 1$, $G^{>d} = \fR = 1$ and
$\zeta^{(1)}_1 = 0$, $\zeta^{(d)}_2 = 0$, $\zeta^{(1)}_{(1,2)}, \zeta^{(d)}_{(1,2)} = 0$, respectively. These comply with the result in the matrix case (cf. Remark \ref{constants_matrices}).
\end{remark}
Since all microsteps $\mathcal{M}^{\ast}$ are stable, we call this regularized ALS method stable - hence the name SALSA (Stable ALS Approximation).
We summarize in Algorithm \ref{fiadfsw} one full left sweep $\mu = 1 \rightarrow d$ of SALSA for fixed rank $r$.
Note that in practice, the complexity is reduced to the minimal necessary order in the optimal case (cf. Remark \ref{applofcg}).
The simpler matrix case ($d = 2$) is carried out in Algorithm \ref{MatrixCompletion}.
\begin{algorithm}
  \caption{SALSA Sweep \label{fiadfsw}}
  we here identify $\widetilde{\Sigma} = \diag(\widetilde{\sigma})$
  \begin{algorithmic}[1]
    \REQUIRE limit $\sigma_{\mathrm{min}}$, parameter $\omega$, initial guess $A = \tau_r(G)$ for which $\rhb{G_2},\ldots,\rhb{G_d}$ are row-orthogonal
     and data points $M|_P$
    \FOR{$\mu=1,\ldots,d$}
    \IF{$\mu \neq 1$}
    \STATE compute the SVD $U \widetilde{\Sigma} V^T := \lhb{G_{\mu-1}}$ and set $\sigma_i^{(\mu-1)} := \max(\widetilde{\sigma}_i,\sigma_{\mathrm{min}})$, $i = 1,\ldots,r_{\mu-1}$
    \STATE set $G_{\mu-1}$ via $\lhb{G_{\mu-1}} = U$ and $G_{\mu} := \widetilde{\Sigma} V^T G_{\mu}$
    \ENDIF
    \IF{$\mu \neq d$}
    \STATE compute the SVD $U \widetilde{\Sigma} V^T := \lhb{G_\mu}$ and set $\sigma_i^{(\mu)} := \max(\widetilde{\sigma_i},\sigma_{\mathrm{min}})$, $i = 1,\ldots,r_\mu$
    \STATE update $G_{\mu+1} := V^T G_{\mu+1}$ and $G_{\mu}$ via $\lhb{G_\mu} = U \widetilde{\Sigma}$
    \ENDIF
    \FOR{$j=1,\ldots,n_\mu$}
    \STATE update $G_\mu(j) := \N(j)$ by solving the least squares problem
    in Theorem \ref{miofthalsrerefu} for $\fL = G^{<s}, \ \fR = G^{>s}, \ \gamma = \sigma^{(\mu-1)}, \ \theta = \sigma^{(\mu)}$ (cf. Remark \ref{casemu1d})
    using coarse cg (cf. Remark \ref{applofcg})
    \ENDFOR        
    \ENDFOR
  \end{algorithmic}
\end{algorithm}
\section{Semi Implicit and Non Uniform Rank Adaption}\label{sec:seimannounraad} \BCn{m3}{m}%
The rank adaption for tensor completion is carried out analogously to the matrix case, Section \ref{sec:matrixra}.
The exact choices of the following parameters are not important, such that we only indicate them roughly. The specific values which
we used in all numerical tests are provided in Section \ref{sec:tupa}.
The number of minor singular values (cf. Definition \ref{virtstabsv}) for each matricization is kept constant, such that
\begin{align}
 |\{ i \mid 0 < \sigma^{(\mu)}_i < f_{\mathrm{minor}} \cdot \omega \}| \overset{!}{=} k_{\mathrm{minor}}, \label{rankincr}
\end{align}
for each $\mu = 1,\ldots,d-1$
for certain constants $f_{\mathrm{minor}} < 1$ and $k_{\mathrm{minor}} \in \mathbb{N}$, subject to the theoretical bound $r_{\mu} \leq \min(n_{\mu} r_{\mu-1}, n_{\mu+1} r_{\mu+1})$. 
Furthermore, a common, upper limit $r_{\mu} \leq r_{\mathrm{lim}}$ is applied, which is chosen large enough, but likewise in order
to avoid unnecessary computation time. The factor $f_{\mathrm{minor}}$ is related to the filter in Corollary \ref{fipr}
and the analysis in Section \ref{sec:filter}, but has ultimately been chosen empirically. Whenever necessary, then the rank $r_\mu$ is decreased through a simple truncation $\sigma^{(\mu)}_{r_{\mu}} \gets 0$,
while it is increased using a minor singular value $0 < \sigma^{(\mu)}_{r_{\mu}+1} \ll \sigma_{\mathrm{min}}$.
In the latter case, the required, corresponding singular vectors can for example be chosen randomly.
\\\\
As in the matrix case, the lower limit $\sigma_{\min}$ is a fraction $f_{\sigma_{\min}} \ll 1$ of the residual on the sampling set (cf. Algorithm \ref{CALS_alg}).
The parameter $\omega > 0$ is reduced by a factor $f_\omega$ in each iteration, slowly reducing the magnitude of regularization. 
The factor $1 < f_\omega \in (f^{(\min)}_\omega,f^{(\max)}_\omega)$, in turn, is increased or decreased after each iteration 
through a simple heuristic, in such a way that
\begin{align}
 \max_{X \in P,P_2} \frac{\|A^{({\tt iter})} - M\|_X}{\|A^{({\tt iter}-1)} - M\|_X} \overset{!}{=} 1 + \varepsilon_{\mathrm{progr}}, \quad \varepsilon_{\mathrm{progr}} > 0, \label{rop}
\end{align}
or rather, that it stays close to this fixed value. The tensor
$A^{({\tt iter})} = \tau_{r^{({\tt iter})}}(G^{({\tt iter})})$ is the iterate at iteration number ${\tt iter}$. This adaption ensures that $f_\omega$ is
not too large as to impair the approximation quality, but neither so small that 
the required runtime becomes unreasonable.
\\\\
The algorithm will terminate if one of the following stopping criteria is fulfilled:
\begin{itemize}
 \item stagnation: $\omega \ll \sigma_{\min}$ and $f_\omega = f^{(\max)}_\omega$
 \item convergence: $\omega \rightarrow 0$ or $\|A^{({\tt iter})} - M\|_P / \|M\|_P \rightarrow 0$
 \item early stop: $\|A^{({\tt iter})} - M\|_{P_2} \gg \min_{i < {\tt iter}} \|A^{(i)} - M\|_{P_2}$
\end{itemize}
We have neglected minor implementation details and practical tweaks in this subsection to focus on the essence of the above criteria,
such that we refer to the Matlab code for remaining parts.
%
%
\oldwojg{\\\\
The stability of SALSA is used to establish an in principle simple rank adaption.
For a more detailed analysis and motivation, we refer to Section \ref{sec:filter}.
We capture the magnitude of regularization caused by the individual singular vectors $\sigma^{(\mu)}$:
\begin{definition}[Minimal filter values]\label{minfilvalues}
Define the entries of $\lowfil^{(\mu)} \in (0,1)^{r_\mu}$ via
\begin{align} \label{theta} \lowfil^{(\mu)}_i := \max(\filter^{(\mu)}_{1,i},\filter^{(\mu+1)}_{i,1}), \end{align}
where $\filter^{(0)} = \filter^{(d)} := 0$ and $\filter^{(\mu)}$ (for each $\mu$) is defined in Corollary \ref{fipr}.
\end{definition}
This magnitude is then used to define certain thresholds for all singular values.
\begin{definition}[Virtual ranks and virtual singular values]\label{virtstabsv}
Let $0 < \lowfil_{\mathrm{virt}} < \lowfil_{\mathrm{stab}} < 1$ be fixed. A singular value $\sigma^{(\mu)}_i$ is called
\textit{virtual}, if $\lowfil^{(\mu)}_i < \lowfil_{\mathrm{virt}}$ and denoted \textit{stabilized} (with respect to $\lowfil_{\mathrm{stab}}$) 
if $\lowfil^{(\mu)}_i > \lowfil_{\mathrm{stab}}$. The \textit{virtual rank} of $A = \tau_r(G)$ is given by its
exact rank $r = r(A)$, while the \textit{stabilized rank} $r^{(S)}$ only includes the stabilized singular values.
\end{definition}
The trick is to overestimate all ranks by $1$ and to gradually decrease $\omega$ (as well as the singular value limit). 
During several iterations, each last singular value $\sigma^{(\mu)}_{r_{\mu}}$ just equals $\sigma_{\mathrm{min}}^{(\mu)}$ (cf. Algorithm \ref{CALS_alg}). 
It does thereby only marginally influence the optimization,
which is why we use the term \textit{virtual}. 
However, at a certain point, the according singular values exceed
the minimum and then stabilize. Each time this happens and certain criteria hold, the technical rank is increased by $1$ 
(by adding a virtual singular value using random terms).
Vice versa, a rank is cut if the stabilized rank is by $2$ lower than the virtual rank. \\
In principle, one could use maximal ranks from the start, but we have encountered that the
current substitute for the replenishment term is not entirely suitable for that. As such an approach would involve higher computational costs anyway,
we do not investigate further into a fully implicit rank adaption. The rest
of this subsection will deal with remaining details.\\
The minimal values $\sigma^{(\mu)}$ substitute the replenishment term in \eqref{conv} in order to prevent virtual singular
values from quickly converging to zero, which would cause them not to be picked up in subsequent steps.
\begin{definition}[Singular value limit]\label{sivali}
The lower limit to the singular values is defined as fixpoint of
\begin{align}
 \sigma_{\mathrm{min}}^{(\mu)} \mapsto \frac{1}{\sum_{\mu=1}^d n_{\mu}} \ (1 - \lowfil_{\mathrm{min}}^{(\mu)}(\sigma_{\mathrm{min}}^{(\mu)})) \ \mathrm{Res}^{\mathrm{est}} \label{fakerepl}
\end{align}
where $\lowfil_{\mathrm{min}}^{(\mu)}(\sigma_{\mathrm{min}}^{(\mu)})$ is defined the same way as $\lowfil^{(\mu)}$ (see \eqref{theta}, \eqref{Gamma}), 
but assuming that all last singular values
equal the minimal $\sigma_{\mathrm{min}}^{(\mu)}$. The value $\mathrm{Res}^{\mathrm{est}}  > 0$ is
a pessimistic estimator for the full residual, 
\[ \mathrm{Res}^{\mathrm{est}}  := (\sqrt{|\Ind|/|P_2|} \mathrm{Res}_{P_2})^{3/2} \ (\sqrt{|\Ind|/|P|} \mathrm{Res}_{P})^{-1/2}. \]
\end{definition}
In practice, it is sufficient to perform a damped fixpoint iteration parallel to the decreases of $\widetilde{\omega}$
to obtain $\sigma_{\mathrm{min}}^{(\mu)}$.
As usual for machine learning, we sacrifice a fraction of the sampling set (the training set) $P$ to
obtain a control set $P_2$. The performance on this set serves as main criterion for the final output
of the algorithm.
\begin{definition}[Control set]\label{cose}
For a given index set $P$, we define $P_2 \subset P$ as control set. This set may be chosen
randomly or specifically distributed as well. The actual set used for the optimization is replaced by $P \leftarrow P \setminus P_2$ (keeping
the same symbol). 
\end{definition}
It is not easy to give a general criterion when to terminate the algorithm. 
Often, an estimate for an upper limit to all ranks proves efficient. 
Since as final result, the one representation with minimal control residual is chosen (cf. Remark \ref{te}), 
the approximation quality can be impaired only through a too early stop. Only in case that the algorithm
does not allow any more rank increases (which it only does for substantial reasons)
and the progress is low as well as $\omega$ sufficiently small, it will terminate.
%
\begin{definition}[Unblocked ranks]\label{unra}
We define the set of unblocked ranks $\mathcal{U} \subset \{2,\ldots,d\}$.
If $\sum_{\mu = 1}^d r_{\mu-1} r_{\mu} n_{\mu} - \sum_{i = 1}^d r_{\mu}^2 > |P|/1.2$ (degrees of freedom too high), then we set $\mathcal{U} = \emptyset$.
Otherwise,
\begin{align*} \mu \in \mathcal{U} \
\Leftrightarrow \ r_{\mu}+1 \leq \min(n_{\mu} r^{(S)}_{\mu-1}, n_{\mu+1} r^{(S)}_{\mu+1}, r_{\mathrm{lim}}) \ \wedge \ \lowfil_{\mathrm{min}}^{(\mu)}(\sigma_{\mathrm{min}}^{(\mu)}) < \lowfil_{\mathrm{stab}},
\end{align*}
%
where $r_{\mathrm{lim}} \in \mathbb{N}$ is a given, upper limit to any rank. The value $\lowfil_{\mathrm{min}}^{(\mu)}$ is defined by \eqref{fakerepl}.
\end{definition}
The reason for this definition are simple: either a rank is limited
by the neighboring ranks (independent of the tensor completion problem), it is maximal as defined by the user or it is by definition impossible to add a virtual rank, 
since due to the value $\sigma_{\mathrm{min}}^{(\mu)}$, an additional singular values would already be stabilized (cf. Remark \ref{chra}).
\begin{remark}[Decline of $\omega$]\label{deofom}
Let $G^{(\tt iter)}$ be the representation after iteration number ${\tt iter} = 1,2,\ldots$. Define
 \[\gamma^i_X := \frac{\mathrm{Res}_X(G^{(i)})}{\mathrm{Res}_X(G^{(i-1)})},\qquad  i={\tt iter}-4 \ldots {\tt iter}, \ X \in \{P,P_2\} \]
the arithmetic mean of the last $5$ residual reduction factors for the sampling and control residual. Let
$\widetilde{\lowfil}_{\mathrm{stab}} > \lowfil_{\mathrm{stab}}$ be fixed and close to, yet less than $1$.
We say $\omega$ is {\normalfont{minimal}}, if either
 \newcommand{\cfbox}[2]{%
    \colorlet{currentcolor}{.}%
    {\color{#1}%
    \fbox{\color{currentcolor}#2}}%
}
\def\ccfbox#1{
\begin{itemize}
 \item[]  \noindent\cfbox{gray}{\parbox{\linewidth - 2\fboxsep}{ #1 }}
\end{itemize}
}
\def\ccnofbox#1{
\begin{itemize}
 \item[]  \noindent\parbox{\linewidth - 2\fboxsep}{ #1 }
\end{itemize}
}
\ccfbox{there exists a stabilized (with respect to $\widetilde{\lowfil}_{\mathrm{stab}}$) rank equal to $r_{\mathrm{lim}}$}
or
\ccfbox{
\ccnofbox{$\mathcal{U} = \{ \}$ (Definition \ref{unra})}
and
\ccnofbox{all singular values are stabilized (with respect to $\widetilde{\lowfil}_{\mathrm{stab}}$)
}
}
\vspace{0.1cm}
%
%
\noindent
The parameter is regulated as follows:
Initialize $\widetilde{\omega} = \omega_0$. After each iteration ${\tt iter}$, if 
\ccfbox{ $\omega$ is not minimal }
and
\ccfbox{ 
\ccfbox{
  \ccnofbox{the singular spectrum does not currently change too much}
  and
  \ccnofbox{$\gamma^i_P < \gamma^{\ast}$ or $\gamma^i_{P_2} < \gamma^{\ast}$}
  }
  or
  \ccnofbox{$\mathrm{Res}_P(G^{({\tt iter})}) > \mathrm{Res}_P(G^{({\tt iter}-1)})$}
  }
  \vspace{0.1cm}
  %
%
%
%
%
\noindent
then $\widetilde{\omega}$ is decreased by a constant factor of $f_{\omega}$.
Set then $\omega = \widetilde{\omega} \|\tau_r(G^{({\tt iter})})\|_{\Ind}$. 
Furthermore, the decrease of $\widetilde{\omega}$ is accelerated if $\max(\lowfil^{(\mu)}) \ll \lowfil_{\mathrm{virt}}^{(\mu)}$.
\end{remark}
\begin{remark}[Changing ranks]\label{chra}
The $\mu$-th rank is increased if all of the following conditions hold:
\begin{itemize}
 \item $\mu \in \mathcal{U}$ (Definition \ref{unra})
 \item $\widetilde{\omega}$ has been decreased in the previous iteration
 \item $\sigma^{(\mu)}_{r_{\mu}}$ is stabilized
 \item there is no other stabilized rank $r_\mu^{(S)}$ equal to $r_{\mathrm{lim}}$
\end{itemize}
The representation is then expanded randomly, such that for the new singular value holds $\sigma^{(\mu)}_{r_{\mu}+1} = \sigma_{\mathrm{min}}^{(\mu)}$
and all other singular values remain (near) equal. \\ If, in contrast, at any time $\sigma^{(\mu)}_{r_{\mu-1}}$ is virtual 
(and hence $\sigma^{(\mu)}_{r_{\mu}}$ as well), the rank is decreased by $1$ and the tensor truncated.
\end{remark}
By this kind of rank adaption, only virtual singular values are ever introduced or removed.
This is to be understood as the main idea behind SALSA. The exact rank is not relevant anymore within the optimization, only
the magnitude of $\omega$ compared to the singular values matters.
\begin{remark}[Termination]\label{te}
Let $i^{\ast} = \mathrm{argmin}_i \mathrm{Res}_{P_2}(G^{(i)})$ and $f_{P_2} > 1$ be fixed.
If one of the following criteria holds, then the algorithm terminates.
\begin{itemize}
 \item $\omega$ is minimal (Remark \ref{deofom}) (convergence)
 \item ${\tt iter} > 10$ and $\mathrm{Res}_{P_2} > f_{P_2} \cdot \mathrm{Res}_{P_2}(G^{i^{\ast}})$ (Definition \ref{cose}) (divergence)
\end{itemize}
As final result, $G^{i^{\ast}}$ is chosen (it may be cut to its stabilized rank).
\end{remark}
A typical approximation terminates as follows. At some point, the residual on the control set (Definition \ref{cose}) declines slower than the
one on the sampling points. If we do not already encounter divergence when further on decreasing $\omega$ (Remark \ref{te}), the lower limit to all
singular values (Definition \ref{sivali}) will increase up to the point where all ranks are blocked (Definition \ref{unra}). This prohibits
further rank increases (Remark \ref{chra}) and ultimately $\omega$ becomes \textit{minimal} (Remark \ref{deofom}), that is it will no longer be decreased.
At that point the algorithm terminates (Remark \ref{te}) and picks a former representation with smallest residual on the control set.}
%
%
%
\subsection{The SALSA Algorithm}
SALSA (Stable ALS Approximation) for tensors is summarized in Algorithm \ref{ra}. \BCn{m2}{m}%
For the (recommended) choices of tuning parameters also used in the numerical tests, see Subsection \ref{sec:tupa}.
The Matlab implementation as well as a video showing the rank adaption by means of plotting the singular values during a runtime
can be found on the personal webpage of the author Sebastian Kr\"amer, along with all sources that were used to create the presented results.%
\footnote{by the time the paper is written, the address is {\tt www.igpm.rwth-aachen.de/team/kraemer}}
The notion of stability we address in this paper does however not mean a low sensibility to roundoff errors accumulated over multiple iterations. 
We encountered that even different processor architectures or rearrangement of brackets with regard to associativity in products can change the intermediate approximations.
\oldwo{in case the algorithm}\newwo{As in almost all cases the algorithm} does not find the global minimum,
this also holds for the final output, both in case of ALS and SALSA. For once, this is not an actual drawback, since even exact arithmetic would
not consistently cause the algorithm to find better local minima, but it should be kept in mind when reconstructing results.
The order of computational complexity does not exceed $\mathcal{O}(dr^2\oldwojg{\# P}\newwo{|P|})$ using coarse \oldwo{CG}\newwo{cg}, where $r = \max_{\mu} r_{\mu}$. Note that the computational
complexity per sweep can actually be lower, since not all ranks are kept equal, but some are lower than others. \\

 \begin{algorithm}
  \caption{SALSA Algorithm \label{CALS_alg}}
    \newwo{
  \begin{algorithmic}[1]  \label{ra}
  \REQUIRE $P \subset \Index$, $M|_P$
  \STATE initialize $G$ s.t. $\tau_r(G) \equiv \|M|_P\|_1/|P|$ for $r \equiv 1$ and $\omega = 1/2 \|\tau_r(G)\|_F$
  \STATE split off a small control set $P_2 \subset P$ (Definition \ref{cose})
  \FOR{${\tt iter}=1,2,\ldots$}
    \STATE proceed SALSA sweep$^{\ast}$ (Algorithm \ref{fiadfsw})  
    \STATE$^{\ast}$: and renew lower limit $\sigma_{\mathrm{min}} := f_{\min} \cdot \frac{|\Index|}{|P|} \|\tau_r(G) - M\|_P$
    \STATE$^{\ast}$: adapt $f_\omega$ according to progress (cf. \eqref{rop})
    \STATE$^{\ast}$: adapt and decrease $\omega$ by factor of $f_\omega$
    \STATE adapt rank according to \eqref{rankincr} (start this when the first few iteration have passed)
    \IF{a stopping criterion applies (Section \ref{sec:seimannounraad})}
      \STATE terminate algorithm
      \STATE \textbf{return} iterate for which $\|\tau_r(G) - M\|_{P_2}$ was lowest
    \ENDIF
  \ENDFOR
  \end{algorithmic}}
  \oldwojg{
  \begin{algorithmic}[1]  \label{ra}
  \REQUIRE $P \subset \Index$, $M|_P$ (and parameters)
  \STATE initialize $G$ s.t. $\tau_r(G) \equiv \mathrm{const}$, $|P| \|\tau_r(G)\|_F^2 = |\Ind|\|M|_P\|_P^2$ for $r \equiv 1$ and $\widetilde{\omega} = 1/2$
  \STATE split off a small validation set $P_2 \subset P$ (Definition \ref{cose})
  \STATE proceed one or a few ordinary ALS sweeps (Algorithm \ref{fiadfsw} for $\omega \equiv 0$)
  \FOR{${\tt iter}=1,2,\ldots$}
    \STATE ONCE: after a few iterations, introduce virtual ranks ($\Rightarrow r \equiv 2$)
    \STATE proceed SALSA sweep$^{\ast}$ (Algorithm \ref{fiadfsw})    
    \STATE$^{\ast}$: decrease $\widetilde{\omega}$ if progress low (Remark \ref{deofom} applies)
    \IF{$^{\ast}$: a singular value becomes stabilized/virtual (Remark \ref{chra} applies)} 
      \STATE increase/decrease the virtual rank
    \ENDIF
    \IF{final breaking criteria apply (Remark \ref{te})}
      \STATE terminate algorithm
    \ENDIF
  \ENDFOR
  \end{algorithmic}}  
\end{algorithm} 
\section{Numerical Experiments}\label{sec:ne}
We consider the following \oldwo{two}\newwo{three} algorithms:
\begin{itemize}
 \item \oldwo{(Greedy adaptive)}\textbf{ALS} (\newwo{modified }Algorithm \ref{fiadfsw} for $\omega \equiv 0$)
 \item \textbf{SALSA} (Algorithm \ref{CALS_alg}\newwo{, the algorithm proposed in this work})
 \item \newwo{\textbf{RTTC} (Riemannian cg for tensor train completion \cite{St16_Rie})}\BCn{j1_}{j_}%
\end{itemize}
We explain how ranks are adapted for ALS in Section \ref{sec:raadfostals}, shortly present the idea behind RTTC in Section \ref{sec:rttc},
give details for data acquisition and measurements in Section \ref{sec:daaqanme} as well as
tuning parameters in Section \ref{sec:tupa}. We analyze the results in the latter Section \ref{sec:anofre}. 
For each test, we give a (too large) upper bound $r_{\mathrm{lim}}$ 
for the maximal rank of the iterates\newwo{, in order to rule out excessive computation times (although this bound is seldomly reached).\BCn{w1}{w}
We would like to emphasize that, in contrast to rank adaption itself, the one dimensional problem of choosing such a bound
is easily controlled for example based on the validation set}.%
\oldwo{We like to emphasize that, in contrast to rank adaption itself, such a bound 
can subsequently be increased if it might yield improvements - 
since this does only pose a one dimensional problem.
Such a limit is not obligatory, but in specific cases the necessarily coarse criteria in Remark \ref{unra}
only hold for very large rank, such that the algorithms would use up a lot of time without changing the results (cf. Remark \ref{te}).}
For simplicity, we use a common mode size $n = n_1 = \ldots = n_d$. 

\subsection{Rank Adaption for Standard ALS}\label{sec:raadfostals}
Since ALS itself is not rank adaptive, the (so far) most promising approach, that is greedy
rank adaption, is chosen. When the progress stagnates, the algorithm searches for the highest
(new) singular value $\sigma^{(\mu)}_+$ which any of the rank increases may yield. These values are estimated as follows.
Let $\mu$ be fixed and $G$ be a representation for which $G^{<\mu-1}$ is column-orthogonal and $G^{>\mu}$ is row-orthogonal.
Further, let
\begin{align*} 
 T & := (G^{<\mu-1})^T \ \left((M - \tau_r(G))|_P \right)_{(\mu-1,\mu)} \ (G^{>\mu})^T, \\
 \alpha_{i_{\mu-1},i_\mu} & = \argmin_{\widetilde{\alpha}_{i_{\mu-1},i_\mu}} \|G^{<\mu-1} \ (G_{\mu-1}(i_{\mu-1}) 
  \cdot G_{\mu}(i_{\mu}) \\
  & \quad + \widetilde{\alpha}_{i_{\mu-1},i_\mu} T(i_{\mu-1},i_\mu)) \ G^{>\mu} - M_{(\mu-1,\mu)} \|_{P_{(\mu-1,\mu)}(i_{\mu-1},i_\mu)}.
\end{align*}
We define the core $H(\cdot,\cdot)$, $H(i_{\mu-1},i_\mu) 
= \alpha_{i_{\mu-1},i_\mu} T(i_{\mu-1},i_\mu) \in \R^{r_{\mu-2} \times r_{\mu}}$
and stack its entries to form the matrix $\mathfrak{H} \in \R^{r_{\mu-2} n_{\mu-1} \times r_{\mu} n_{\mu}}$.
\oldwo{Then}\newwo{This yields the candidate} $\sigma^{(\mu)}_+ := \|\mathfrak{H}\|_2$, the largest singular value of $\mathfrak{H}$.\BCn{x1}{x}
This approach is very similar \oldwojg{to two-fold DMRG microsteps as defined in \cite{HoRowSc12_The},}\newwo{to the two-fold microsteps as defined in \cite{HoRowSc12_The} and the rank adaption in AMEn \cite{DoSa14_Alt}, which are both based on DMRG.} It however prevents overfitting,
since it is equivalent to performing only one, preconditioned CG step (similarly to a Landweber iteration). Furthermore, our experiments suggest that it is more reliable.
The corresponding rank $\mu = \argmin_{\widetilde{\mu}} \sigma^{(\widetilde{\mu})}_+$ is increased by $1$, 
using a rank $1$ approximation of $\mathfrak{H}$.
\oldwo{Basically the same termination criteria as for SALSA are used, although some criteria 
that are based on $\omega$ need to be compensated for}\newwo{Since ALS works differently than SALSA,\BCn{d1_}{d_}
only some stopping criteria can be overtaken, while additional ones are introduced in order to prevent premature termination but also
to avoid unnecessary runtime}. No rank decreases are proceeded since
this involves tremendous difficulties, of which the most important one is the sheer incapability to decide when and
which rank actually to decrease.

\subsection{\newwo{The RTTC Algorithm}}\label{sec:rttc}\BCn{j3_}{j_} 
The article \cite{St16_Rie}, in which RTTC is derived and explained in detail, focuses exclusively on
tensor completion using the tensor train format as well (but it can likewise be assumed that it is generalizable to other problems).
Instead of alternating optimization, RTTC provides a nonlinear conjugate gradient scheme based on Riemannian optimization, which has comparable computational complexity
per sweep. Naturally, the problem of rank adaption also poses a challenge in that setting. Therefore, a heuristic rank adaption is introduced (Algorithm 3 in \cite{St16_Rie})
which successively tests if a single rank increase yields a tolerable change of the residual on the validation set (based on a parameter $\rho \geq 0$).
If so, it continues normally with the next test; otherwise, the algorithm priorly resets to the iterate with previous rank. \\\\
We observed however that for example the choice $\rho \neq 0$ worked better given the assignments in Section \ref{sec:generic_sec}, 
but choices other than $\rho = 0$ caused the algorithm to not recover a single instance in case of the rank adaption test tensor in Section \ref{sec:recratt}.
We therefore used three different choices $\rho \in \{0,0.2,1\}$ ($1$ is the default in \cite{St16_Rie}) and granted RTTC, as opposed to SALSA,
the advantage to choose the best result (based on the test set) for each of the following problem classes. \\\\
Some minor modification to RTTC were necessary in order to provide fair tests. The allowed number of iterations per rank increase test
was increased, since $10$ turned out to be too few. In exchange, the relative improvement parameter was raised to $10^{-3}$, as lower
values did not yield improvements. Since RTTC does not have an actual stopping criterion,
we stopped whenever the normalized validation residual increased and at the same time was $1000$ times higher than the normalized sampling residual, or whenever the current validation residual
was much higher than any previously obtained one (as described in Section \ref{sec:seimannounraad}). When terminating, each time the iterate with lowest validation residual was chosen 
(as described in Algorithm \ref{CALS_alg}). These adaptations were done carefully in order to obtain only improvements in the quality of approximation.
We did further not compare the required number of iterations of RTTC, and the algorithm was granted as much time as necessary.
\subsection{Data Acquisition and Measurements}\label{sec:daaqanme}
\textit{Sampling:} In order to obtain a sufficient sampling for each slice of $M$, we generate the set
$P$ in a quasi-random way as follows: For each direction $\mu = 1,\ldots,d$ and each index 
$i_{\mu}\in\Index_\mu$ we pick $c_{\mathrm{sf}} \cdot r_P^2$ indices $i_1,\ldots,i_{\mu-1},i_{\mu+1},\ldots,i_d$ 
at random (uniformly). This gives in total $|P| \oldwojg{\lesssim}\newwo{=} c_{\mathrm{sf}} \cdot d n  r_P^2$ samples (excluding duplicate samples). 
The rank $r_P$ is artificial, such that $c_{\mathrm{sf}}$ can be interpreted as sampling factor
since the number of degrees of freedom of a TT-tensor of common rank $r$ is slightly less than $d n r^2$.\\
\textit{Testing:}
As a \oldwo{verification}\newwo{test} set $C$, we use a set of the same cardinality as $P$ that is generated in the same way.
\newwo{Of course, neither this set nor the values $M|_C$ are known by the algorithm.
The residuals are then measured with respect to the chosen iterate which had the lowest validation residual (cf. Algorithm \ref{CALS_alg}).
}
\\ 
\textit{Order of optimization:} Instead of the sweep we gave before ($\mu = 1,\ldots,d$) for simplicity, we
alternate between two sweeps ($\mu = 1,\ldots,h, \quad \mu = d,\ldots,h, \quad h = \lfloor d/2 \rfloor$) to enhance symmetry.\\
\textit{Averaging:} With $\arithmean{\cdot}$ we denote the arithmetic mean and by $\geomean{\cdot}$ the geometric mean
which we use for logarithmic scales.
\subsection{Implementation Details and Tuning Parameters}\label{sec:tupa} 
All tests for ALS and SALSA were done using a (pure) Matlab implementation. This includes the toolbox \textit{multiprod} \cite{Pao2010_Mul},
which allows a reasonably swift evaluation of products between arrays of matrices, $H(i) J(i)$, $i = 1,\ldots,k$, and is much faster than a plain loop.
In contrast, some subfunctions of RTTC are based on .mex routines.\BCn{k2_}{k_}\\
Instead of solving full problems in each microstep, both ALS and SALSA use coarse cg (cf. Remark \ref{applofcg}), for which
the tolerance was empirically chosen low enough such that it did not influence the quality of approximation \BCn{e2_}{e_}. Note that the cg steps
of RTTC are not comparable, since they are performed on low rank manifolds and used to update all cores at once. \\
We only list time performances in the appendix, which should be interpreted carefully, while the iteration numbers may
provide a clearer picture due to similar computational complexities. All parameters
have been chosen equally for all experiments (except $\rho$ for RTTC) with respect to best results, not speed, and could be relaxed for easier problems
(or in practice for first trials) to reduce timing considerably. 
Straightening the tolerances for ALS or RTTC, hence allowing more iterations, did however not lead to 
notable improvements. \\
\oldwo{We use the same tuning parameters for all tests, given by:
$\gamma^{\ast} = 10^{-3}$, $f_{\omega} = 1.1$, $\lowfil_{\mathrm{virt}} := 0.33$, $\lowfil_{\mathrm{stab}} := 0.99$, $\widetilde{\lowfil}_{\mathrm{stab}} := 0.999$,
$f_{P_2} := 2.5$, $|P_2|/|P| = 1/20$. The specific choices are heuristic (based on experience), but likewise recommendable
for other problems.}
\newwo{The parameter choices (cf. Section \ref{sec:seimannounraad}) for SALSA are given by
$\varepsilon_{\mathrm{progr}} = 5\cdot10^{-3}$, $f_{\mathrm{minor}} = 0.5$, $k_{\mathrm{minor}} = 2$, $f^{(\min)}_{\omega} = 1+5\cdot10^{-4}$, $f^{(\max)}_{\omega} = 1.1$, $f_{\sigma_{\min}} = 0.1$.
The size of the validation set is $|P_2|/(|P|+|P_2|) = 1/20$. These have in parts been chosen empirically and are recommendable
for other problems.}
We observed that any reasonably close values work as well,
the more so for larger sampling sets. The performance is in that sense not based on how close
the parameters are to some unknown optimal choices. We also refer to the implementation for all details.

\subsection{Approximation of a Tensor with Near Uniform Singular Spectrum}\label{sec:domino_sec}

At first, we consider the completion of the following tensor:
\[ D(i_1,\ldots,i_d) := \left( 1 + \sum_{\mu = 1}^{d-1} \frac{i_{\mu}}{i_{\mu+1}} \right)^{-1}, \quad i_\mu = 1,\ldots,n, \ \mu = 1,\ldots,d \]
This tensor is not low rank, but has well ordered modes and uniformly exponentially decaying singular values.  
It can therefore very well be approximated with uniform ranks (for a black box, rank adaptive algorithm however, this is not trivial to recognize) 
and the low variance of results suggests that mostly \oldwo{the}\newwo{a near} best approximation is found. Hence, standard ALS can barely be outperformed. 
The results are plotted in Figure \ref{domino_plot} (see Appendix C for Table \ref{domino_table}).

\begin{figure}
  \begin{center}
      \ifuseprecompiled
      \includegraphics[width=0.98\textwidth]{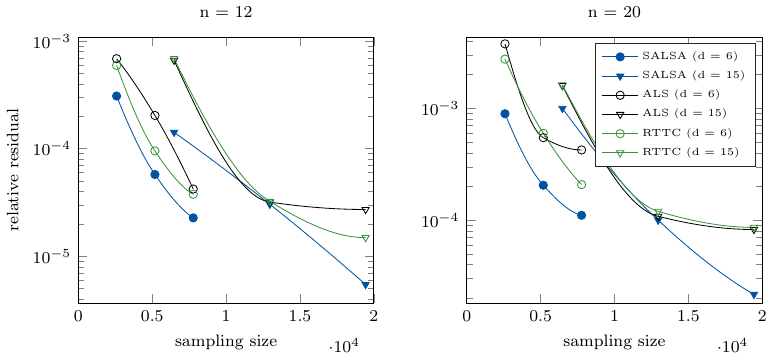}
      \else
      \setlength\figureheight{4.5cm}
      \setlength\figurewidth{0.9\linewidth}
      \tikzsetnextfilename{domino_plot}
      \input{tikz_base_files/domino_plot/domino_plot.tex}
      \fi
  \end{center}
  \caption{\label{domino_plot} ($d=6\oldwojg{,9},15$, $r_P=6$, $r_{\mathrm{lim}} = \oldwojg{10\ }\newwo{14}$, $n=12,20$, $c_{\mathrm{sf}} = 2,4,6$) Plotted are, \newwo{for
  the tensor $D$,} for varying dimension and mode size, the averaged relative residuals $\geomean{\|A - M\|_C/\|M\|_C}$
    \oldwo{and accordant standard deviations} as functions of the sampling size $|P|$
    as result of \newwo{each} $20$ trials, for ALS (black), SALSA (blue, filled symbols) \newwo{and RTTC using $\rho = 1$ (green). The markers are exact;
    the intermediate lines are \textit{shape-preserving piecewise cubic Hermite interpolations of such}.}}
\end{figure}
\subsection{Approximation of Three Generic Tensors with non Uniform Singular Spectrum}\label{sec:generic_sec}
We want to demonstrate how different results can
be through proper rank adaption, considering the following three tensors, generated by generic functions:
\begin{align*} f^{(1)}(i_1,\ldots,i_8) & := \frac{{i_1}}{4} \cos({i_3}-{i_8}) + \frac{{i_2}^2}{{i_1}+{i_6}+{i_7}} + {i_5}^3 \sin({i_6}+{i_3}) \\
 f^{(2)}(i_1,\ldots,i_7) & := \left(\frac{{i_4}}{{i_2}+{i_6}} + {i_1}+{i_3}-{i_5}-{i_7} \right)^2, \quad i_\mu = 1,\ldots,n, \ \mu = 1,\ldots,d \\
 f^{(3)}(i_1,\ldots,i_{11}) & := \sqrt{i_2 +i_3+ \frac{1}{10}(i_4+i_5+i_7+i_8+i_9) + \frac{1}{20}(i_1 - i_6 - i_{10} +i_{11}  )^2};
\end{align*}
In contrast to the tensor in Section \ref{sec:domino_sec}, the modes are not (and hardly can be) ordered
in accordance with the TT format. A different ordering may of course yield other results,
but we cannot assume to find a better ordering if the approximation fails in
the general case. The results are plotted in Figure \ref{generic_plot} (see Appendix C for Table \ref{generic_table}).
\begin{figure}
  \begin{center}
      \ifuseprecompiled
      
     \includegraphics[width=0.98\textwidth]{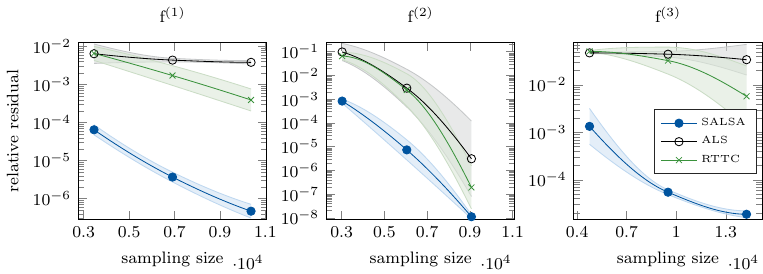}
     \else
      \setlength\figureheight{3cm}
      \setlength\figurewidth{0.9\linewidth}
      \tikzsetnextfilename{generic_plot}
      \input{tikz_base_files/generic_plot/generic_plot.tex}
      \fi
  \end{center}
  \caption{\label{generic_plot} ($d_1 = 8, d_2 = 7, d_3 = 11$, $r_P=6$, $r_{\mathrm{lim}} = 10$, $n=8$, $c_{\mathrm{sf}} = 2,4,6$) Plotted are, 
    for the tensors $f^{(1)}$ (left), $f^{(2)}$ (middle) and $f^{(3)}$ (right), the averaged relative residuals $\geomean{\|A-M\|_C/\|M\|_C}$
    and \newwo{shadings proportional to the}\oldwo{accordant} standard deviations as functions of the sampling size $|P|$
    as result of \newwo{each} $20$ trials, for ALS (black), SALSA (blue, filled symbols) \newwo{and RTTC using $\rho = 0.2$ (green). The markers are exact;
    the intermediate lines are \textit{shape-preserving piecewise cubic Hermite interpolations of such}.}}
\end{figure}
\subsection{Recovery of Random Tensors with Exact Low Rank}\label{sec:reofratewilora}
We next consider the recovery of quasi-random tensors with exact low ranks.
Although this in practice will never occur, it is a very neutral test\footnote{Note that in some papers,
uniform distributions on $[0,1]$ are used such that all entries of the target tensor are
positive, causing each first singular value to be huge compared to all following ones.
This leads to a tremendous simplification of the completion problem. There seems to be no indication yet that
the sampling required for the completion of a random tensor is in general close to what is stated for the matrix case \cite{CaTa10_The}.}. 
The ranks are generated randomly, but it is ensured that $\arithmean{r} \geq 2/3 k$ and $\max(r) \leq k$ for
some bound $k \in \mathbb{N}$. \\
Each of these is generated via a TT representation $A=\tau_r(G)$ where we assign to each entry of each
block $G_1,\ldots,G_d$ a uniformly distributed random value in $[-0.5,0.5]$. Subsequently,
the singular values $\Sigma^{(1)},\ldots,\Sigma^{(d-1)}$ are forced to take 
uniformly distributed random values in $[0,1]$ (up to scaling). This is achieved
by successive replacements of the current values in $G$. \\
As results, we plot the number of successful recoveries ($\|A-M\|_C/\|M\|_C < 10^{-5}$)
for different mode sizes $n$ (each single tuple uniform), dimensions $d$ and maximal ranks $k$ of the
target tensor (Figures \ref{rt6}, \ref{rt8}).
\def\plottickfontsize{\tiny} 
\begin{figure}
  \begin{center}
\ifuseprecompiled
\includegraphics[width=0.98\textwidth]{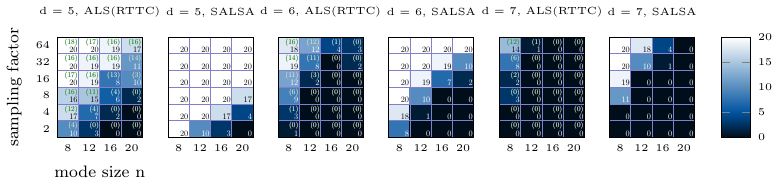}
\else
\def\plottitlefontsizeOLD{\plottitlefontsize}
\def\plottitlefontsize{\plottitlefontsizesmaller}
\setlength\figureheight{1.7cm}
      \setlength\figurewidth{0.85\linewidth}
      \tikzsetnextfilename{random_tensors_6_log_scale}
\subimport{tikz_base_files/random_tensors_6_log_scale/}{random_tensors_6_log_scale.tikz}
\def\plottitlefontsize{\plottitlefontsizeOLD}
    \fi
  \end{center}
  \caption{\label{rt6} ($d=5,6,7$, $r_P = 6$, $r_{\mathrm{lim}} = 9$, $n=8,12,16,20$, $c_{\mathrm{sf}} = 2,4,8,16,32,64$) 
    Displayed as $20$ shades of 
    blue (black $(0)$ to white $(\mbox{all } 20)$) are the number of successful reconstructions 
    for random tensors with maximal rank $k = 6$ for ALS and SALSA. \newwo{The bracketed green numbers are the results for RTTC using $\rho = 0.2$ and are independent of the shading.
    We recommend use of the digital version for better readability.}}
\end{figure}

\begin{figure}
  \begin{center}
\ifuseprecompiled
\includegraphics[width=0.98\textwidth]{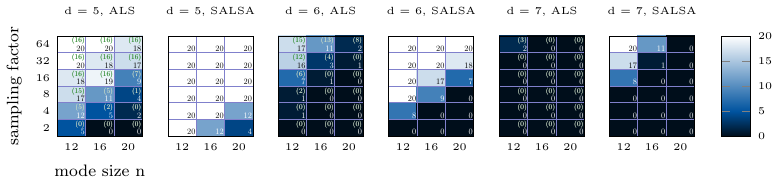}
\else
\def\plottitlefontsizeOLD{\plottitlefontsize}
\def\plottitlefontsize{\plottitlefontsizesmaller}
\setlength\figureheight{1.7cm}
      \setlength\figurewidth{0.85\linewidth}
      \tikzsetnextfilename{random_tensors_8_log_scale}
\subimport{tikz_base_files/random_tensors_8_log_scale/}{random_tensors_8_log_scale.tikz}
\def\plottitlefontsize{\plottitlefontsizeOLD}
\fi
  \end{center}
\caption{\label{rt8} ($d=5,6,7$, $r_P = 8$, $r_{\mathrm{lim}} = 11$, $n=12,16,20$, $c_{\mathrm{sf}} = 2,4,8,16,32,64$) 
    Displayed as $20$ shades of 
    blue (black $(0)$ to white $(\mbox{all } 20)$) are the number of successful reconstructions 
    for random tensors with maximal rank $k = 8$ for ALS and SALSA. \newwo{The bracketed green numbers are the results for RTTC using $\rho = 0.2$ and are independent of the shading.
    We recommend use of the digital version for better readability.}}
\end{figure}

\subsection{Recovery of the Rank Adaption Test Tensor}\label{sec:recratt}

Last but not least, we consider the recovery
of tensors as in Example \ref{raadtete}, for
which $Q_1, Q_4, Q_5$ and $Q_6$ are generated quasi-randomly for
each trial. For an explanation of the results in Figure \ref{spc}, we refer to Section \ref{sec:reofratewilora}.

\begin{figure}
  \begin{center}
\ifuseprecompiled
\includegraphics[width=0.98\textwidth]{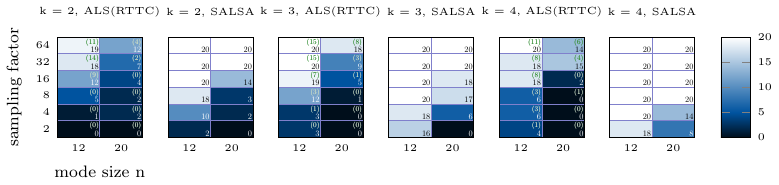}
\else
\def\plottitlefontsizeOLD{\plottitlefontsize}
\def\plottitlefontsize{\plottitlefontsizesmaller}
\setlength\figureheight{1.7cm}
      \setlength\figurewidth{0.85\linewidth}
      \tikzsetnextfilename{spc_tensor_log_scale}
\subimport{tikz_base_files/spc_tensor_log_scale/}{spc_tensor_log_scale.tikz}
\def\plottitlefontsize{\plottitlefontsizeOLD}
\fi
  \end{center}
  \caption{
\label{spc} ($d=6$, $r_P = 2k$, $r_{\mathrm{lim}} = 2k+3$, $n=12,20$, $c_{\mathrm{sf}} = 2,4,8,16,32,64$) 
    Displayed as $20$ shades of 
    blue (black $(0)$ to white $(\mbox{all } 20)$) are the number of successful reconstructions 
    for the rank adaption test tensor with rank $(1,k,k,k,1,2k,1)$ for ALS and SALSA. \newwo{The bracketed green numbers are the results for RTTC using $\rho = 0$ and are independent of the shading.
    We recommend use of the digital version for better readability.}}
\end{figure}

\subsection{Analysis of Results}\label{sec:anofre}

SALSA is superior in nearly all observed cases\BCn{k1_}{k_}. For tensors which 
could as well be approximated with uniform ranks, the differences are marginal \newwo{as is to be expected}\oldwo{, but
SALSA yields better results (the timing however is slightly worse)}. The three generic
functions show that the residuals can be multiple orders of magnitude better,
and although the functions were chosen quite randomly, we do of course not want to
over-interpret these specific results. Finally, for the more neutral test of
random tensor recovery, the required sampling seems to be overall
$4$ to $8$ times lower. For the rank adaption test tensor, the performance
of SALSA becomes even better for larger rank $k$ (this is due to
the larger total sampling), while greedy ALS runs into the predicted trouble\newwo{, as does RTTC}.
\newwo{In general, RTTC\BCn{j2_}{j_} performs slightly better than greedy ALS in the approximation of
tensors with exponentially declining singular values, while the latter is slightly better in the random recovery tests.
This difference is unlikely because of their optimization techniques, which are known to achieve similar results,
but rather due to the different rank adaption heuristics. This insight is also based on closer inspections of single tests,
which suggest that the rank adaption of RTTC is inferior even to the one of ALS in the majority of cases.
}

\section{Conclusions}
In this article, we have demonstrated that the most successful completion
algorithms are very sensitive to rank changes and 
that existing rank adaption methods suffer from this. \\
In order to correct this, as proven for SALSA, we suggested a regularization motivated by averaged
microsteps in order to uncouple the optimization of a discrete, technical rank.
While there is likely room for improvements \newwo{and rigorous convergence bounds remain subject to future work},
we take the \newwo{noteworthy} numerical results as indication that \textit{stability (under
truncation)} is a worthwhile property.
The computational complexity of SALSA is further reduced to the minimal order through use
of a coarse cg method.
Although we focused on tensor
completion (with possibly small sampling sets), 
the derivations given in this paper allow for a generalization
to other semi-elliptic problems. Furthermore, it may be possible to adapt the
presented ideas to manifold based methods \newwo{such as RTTC}.


\bibliographystyle{spmpsci} 
\footnotesize
\bibliography{tensor_CALS}

\begin{thebibliography}{10}
\providecommand{\url}[1]{{#1}}
\providecommand{\urlprefix}{URL }
\expandafter\ifx\csname urlstyle\endcsname\relax
  \providecommand{\doi}[1]{DOI~\discretionary{}{}{}#1}\else
  \providecommand{\doi}{DOI~\discretionary{}{}{}\begingroup
  \urlstyle{rm}\Url}\fi

\bibitem{BaSc16_Ite}
Bachmayr, M., Schneider, R.: Iterative methods based on soft thresholding of
  hierarchical tensors.
\newblock Foundations of Computational Mathematics pp. 1--47 (2016).
\newblock \doi{10.1007/s10208-016-9314-z}.
\newblock \urlprefix\url{http://dx.doi.org/10.1007/s10208-016-9314-z}

\bibitem{BaScUs16_Ten}
Bachmayr, M., Schneider, R., Uschmajew, A.: Tensor networks and hierarchical
  tensors for the solution of high-dimensional partial differential equations.
\newblock Foundations of Computational Mathematics pp. 1--50 (2016).
\newblock \doi{10.1007/s10208-016-9317-9}.
\newblock \urlprefix\url{http://dx.doi.org/10.1007/s10208-016-9317-9}

\bibitem{BaGr13_Apr}
Ballani, J., Grasedyck, L.: A projection method to solve linear systems in
  tensor format.
\newblock Numerical Linear Algebra with Applications \textbf{20}(1), 27--43
  (2013).
\newblock \doi{10.1002/nla.1818}.
\newblock \urlprefix\url{http://dx.doi.org/10.1002/nla.1818}

\bibitem{BaGrKl13_Bla}
Ballani, J., Grasedyck, L., Kluge, M.: Black box approximation of tensors in
  hierarchical tucker format.
\newblock Linear Algebra and its Applications \textbf{438}(2), 639 -- 657
  (2013).
\newblock \doi{http://dx.doi.org/10.1016/j.laa.2011.08.010}.
\newblock
  \urlprefix\url{http://www.sciencedirect.com/science/article/pii/S002437951100591X}

\bibitem{BeMo13_Num}
Beylkin G., M.M.: Numerical operator calculus in higher dimensions.
\newblock PNAS \textbf{99}(16), 10,246--–10,251 (2002).
\newblock \doi{10.1073/pnas.112329799}

\bibitem{Bu05_ARo}
Burke, J.V., Lewis, A.S., Overton, M.L.: A robust gradient sampling algorithm
  for nonsmooth, nonconvex optimization.
\newblock SIAM Journal on Optimization \textbf{15}(3), 751--779 (2005).
\newblock \doi{10.1137/030601296}.
\newblock \urlprefix\url{https://doi.org/10.1137/030601296}

\bibitem{CaRe09_Exa}
Cand{\`e}s, E.J., Recht, B.: Exact matrix completion via convex optimization.
\newblock Foundations of Computational Mathematics \textbf{9}(6), 717 (2009).
\newblock \doi{10.1007/s10208-009-9045-5}.
\newblock \urlprefix\url{http://dx.doi.org/10.1007/s10208-009-9045-5}

\bibitem{CaTa10_The}
Cand\`{e}s, E.J., Tao, T.: The power of convex relaxation: Near-optimal matrix
  completion.
\newblock IEEE Trans. Inf. Theor. \textbf{56}(5), 2053--2080 (2010).
\newblock \doi{10.1109/TIT.2010.2044061}.
\newblock \urlprefix\url{http://dx.doi.org/10.1109/TIT.2010.2044061}

\bibitem{DoSa14_Alt}
Dolgov, S.V., Savostyanov, D.V.: Alternating minimal energy methods for linear
  systems in higher dimensions.
\newblock SIAM Journal on Scientific Computing \textbf{36}(5), A2248--A2271
  (2014).
\newblock \doi{10.1137/140953289}.
\newblock \urlprefix\url{http://dx.doi.org/10.1137/140953289}

\bibitem{Do20_Ano}
Dopico, F.M.: A note on sin $\theta$ theorems for singular subspace variations.
\newblock BIT Numerical Mathematics \textbf{40}(2), 395--403 (2000).
\newblock \doi{10.1023/A:1022303426500}.
\newblock \urlprefix\url{http://dx.doi.org/10.1023/A:1022303426500}

\bibitem{EsKh15_Con}
Espig, M., Khachatryan, A.: Convergence of alternating least squares
  optimisation for rank-one approximation to high order tensors.
\newblock arXiv:1503.05431  (2015).
\newblock \urlprefix\url{https://arxiv.org/abs/1503.05431}

\bibitem{GaReYa11_Ten}
Gandy, S., Recht, B., Yamada, I.: Tensor completion and low-n-rank tensor
  recovery via convex optimization.
\newblock Inverse Problems \textbf{27}(2), 025,010 (2011).
\newblock \urlprefix\url{http://stacks.iop.org/0266-5611/27/i=2/a=025010}

\bibitem{Gr10_Hie}
Grasedyck, L.: Hierarchical singular value decomposition of tensors.
\newblock SIAM Journal on Matrix Analysis and Applications \textbf{31}(4),
  2029--2054 (2010).
\newblock \doi{10.1137/090764189}.
\newblock \urlprefix\url{http://dx.doi.org/10.1137/090764189}

\bibitem{GrKlKr15_Var}
Grasedyck, L., Kluge, M., Kr\"amer, S.: Variants of alternating least squares
  tensor completion in the tensor train format.
\newblock SIAM Journal on Scientific Computing \textbf{37}(5), A2424--A2450
  (2015).
\newblock \doi{10.1137/130942401}.
\newblock \urlprefix\url{http://dx.doi.org/10.1137/130942401}

\bibitem{GrKrTo13_Ali}
Grasedyck, L., Kressner, D., Tobler, C.: A literature survey of low-rank tensor
  approximation techniques.
\newblock GAMM-Mitteilungen \textbf{36}(1), 53--78 (2013).
\newblock \doi{10.1002/gamm.201310004}.
\newblock \urlprefix\url{http://dx.doi.org/10.1002/gamm.201310004}

\bibitem{Gr11_Rec}
Gross, D.: Recovering low-rank matrices from few coefficients in any basis.
\newblock IEEE Transactions on Information Theory \textbf{57}(3), 1548--1566
  (2011).
\newblock \doi{10.1109/TIT.2011.2104999}

\bibitem{Ha14_Num}
Hackbusch, W.: Numerical tensor calculus.
\newblock Acta Numerica \textbf{23}, 651--742 (2014).
\newblock \doi{10.1017/S0962492914000087}.
\newblock
  \urlprefix\url{https://www.cambridge.org/core/article/numerical-tensor-calculus/67876F5C81E4D4F84CA334E204B6EADC}

\bibitem{HaKu09_ANe}
Hackbusch, W., K{\"u}hn, S.: A new scheme for the tensor representation.
\newblock Journal of Fourier Analysis and Applications \textbf{15}(5), 706--722
  (2009).
\newblock \doi{10.1007/s00041-009-9094-9}.
\newblock \urlprefix\url{http://dx.doi.org/10.1007/s00041-009-9094-9}

\bibitem{HaSc14_Ten}
Hackbusch, W., Schneider, R.: Tensor Spaces and Hierarchical Tensor
  Representations, pp. 237--261.
\newblock Springer International Publishing, Cham (2014).
\newblock \doi{10.1007/978-3-319-08159-5_12}.
\newblock \urlprefix\url{http://dx.doi.org/10.1007/978-3-319-08159-5_12}

\bibitem{Ha15_Mat}
Hastie, T., Mazumder, R., Lee, J.D., Zadeh, R.: Matrix completion and low-rank
  svd via fast alternating least squares.
\newblock J. Mach. Learn. Res. \textbf{16}(1), 3367--3402 (2015).
\newblock \urlprefix\url{http://dl.acm.org/citation.cfm?id=2789272.2912106}

\bibitem{HoRowSc12_The}
Holtz, S., Rohwedder, T., Schneider, R.: The alternating linear scheme for
  tensor optimization in the tensor train format.
\newblock SIAM Journal on Scientific Computing \textbf{34}(2), A683--A713
  (2012).
\newblock \doi{10.1137/100818893}.
\newblock \urlprefix\url{http://dx.doi.org/10.1137/100818893}

\bibitem{HoThRe12_Onm}
Holtz, S., Rohwedder, T., Schneider, R.: On manifolds of tensors of fixed
  tt-rank.
\newblock Numerische Mathematik \textbf{120}(4), 701--731 (2012).
\newblock \doi{10.1007/s00211-011-0419-7}.
\newblock \urlprefix\url{http://dx.doi.org/10.1007/s00211-011-0419-7}

\bibitem{Ja13_Low}
Jain, P., Netrapalli, P., Sanghavi, S.: Low-rank matrix completion using
  alternating minimization.
\newblock In: Proceedings of the Forty-fifth Annual ACM Symposium on Theory of
  Computing, STOC '13, pp. 665--674. ACM, New York, NY, USA (2013).
\newblock \doi{10.1145/2488608.2488693}.
\newblock \urlprefix\url{http://doi.acm.org/10.1145/2488608.2488693}

\bibitem{Je02_Dyn}
Jeckelmann, E.: Dynamical density-matrix renormalization-group method.
\newblock Phys. Rev. B \textbf{66}, 045,114 (2002).
\newblock \doi{10.1103/PhysRevB.66.045114}.
\newblock \urlprefix\url{http://link.aps.org/doi/10.1103/PhysRevB.66.045114}

\bibitem{GrKrxx_The}
Kr\"amer, S.: The geometrical description of feasible singular values in the
  tensor train format.
\newblock arXiv:1701.08437  (2017).
\newblock \urlprefix\url{https://arxiv.org/abs/1701.08437}

\bibitem{KrStVa14_Low}
Kressner, D., Steinlechner, M., Vandereycken, B.: Low-rank tensor completion by
  riemannian optimization.
\newblock BIT Numerical Mathematics \textbf{54}(2), 447--468 (2014).
\newblock \doi{10.1007/s10543-013-0455-z}.
\newblock \urlprefix\url{http://dx.doi.org/10.1007/s10543-013-0455-z}

\bibitem{Pao2010_Mul}
de~Leva, P.: multiprod - multiple matrix multiplications, with array expansion
  enabled (2010).
\newblock
  \urlprefix\url{https://www.mathworks.com/matlabcentral/fileexchange/8773-multiple-matrix-multiplications-with-array-expansion-enabled}

\bibitem{LiSh13_AnE}
Liu, Y., Shang, F.: An efficient matrix factorization method for tensor
  completion.
\newblock IEEE Signal Processing Letters \textbf{20}(4), 307--310 (2013).
\newblock \doi{10.1109/LSP.2013.2245416}

\bibitem{MaZa12_Sol}
Matthies, H.G., Zander, E.: Solving stochastic systems with low-rank tensor
  compression.
\newblock Linear Algebra and its Applications \textbf{436}(10), 3819 -- 3838
  (2012).
\newblock \doi{http://dx.doi.org/10.1016/j.laa.2011.04.017}.
\newblock
  \urlprefix\url{http://www.sciencedirect.com/science/article/pii/S0024379511003223}

\bibitem{Mi1960_Sym}
MIRSKY, L.: Symmetric gauge functions and unitarily invariant norms.
\newblock The Quarterly Journal of Mathematics \textbf{11}(1), 50--59 (1960).
\newblock \doi{10.1093/qmath/11.1.50}.
\newblock \urlprefix\url{http://dx.doi.org/10.1093/qmath/11.1.50}

\bibitem{MuHuWrGo14_Squ}
Mu, C., Huang, B., Wright, J., Goldfarb, D.: Square deal: Lower bounds and
  improved relaxations for tensor recovery.
\newblock In: T.~Jebara, E.P. Xing (eds.) Proceedings of the 31st International
  Conference on Machine Learning (ICML-14), pp. 73--81. JMLR Workshop and
  Conference Proceedings (2014).
\newblock \urlprefix\url{http://jmlr.org/proceedings/papers/v32/mu14.pdf}

\bibitem{OsTy10_TTc}
Oseledets, I., Tyrtyshnikov, E.: Tt-cross approximation for multidimensional
  arrays.
\newblock Linear Algebra and its Applications \textbf{432}(1), 70 -- 88 (2010).
\newblock \doi{http://dx.doi.org/10.1016/j.laa.2009.07.024}.
\newblock
  \urlprefix\url{http://www.sciencedirect.com/science/article/pii/S0024379509003747}

\bibitem{Os11_Ten}
Oseledets, I.V.: Tensor-train decomposition.
\newblock SIAM Journal on Scientific Computing \textbf{33}(5), 2295--2317
  (2011).
\newblock \doi{10.1137/090752286}.
\newblock \urlprefix\url{http://dx.doi.org/10.1137/090752286}

\bibitem{OsTy09_Bre}
Oseledets, I.V., Tyrtyshnikov, E.E.: Breaking the curse of dimensionality, or
  how to use svd in many dimensions.
\newblock SIAM Journal on Scientific Computing \textbf{31}(5), 3744--3759
  (2009).
\newblock \doi{10.1137/090748330}.
\newblock \urlprefix\url{http://dx.doi.org/10.1137/090748330}

\bibitem{RaScSt15_Ten}
Rauhut, H., Schneider, R., Stojanac, {\v{Z}}.: Tensor Completion in
  Hierarchical Tensor Representations, pp. 419--450.
\newblock Springer International Publishing, Cham (2015).
\newblock \doi{10.1007/978-3-319-16042-9\_14}.
\newblock \urlprefix\url{http://dx.doi.org/10.1007/978-3-319-16042-9\_14}

\bibitem{Re11_ASi}
Recht, B.: A simpler approach to matrix completion.
\newblock Journal of Machine Learning Research \textbf{12}, 3413--3430 (2011)

\bibitem{RoUs13_OnL}
Rohwedder, T., Uschmajew, A.: On local convergence of alternating schemes for
  optimization of convex problems in the tensor train format.
\newblock SIAM Journal on Numerical Analysis \textbf{51}(2), 1134--1162 (2013).
\newblock \doi{10.1137/110857520}.
\newblock \urlprefix\url{http://dx.doi.org/10.1137/110857520}

\bibitem{SiTrLaSu14_Lea}
Signoretto, M., TranÂ Dinh, Q., De~Lathauwer, L., Suykens, J.A.K.: Learning
  with tensors: a framework based on convex optimization and spectral
  regularization.
\newblock Machine Learning \textbf{94}(3), 303--351 (2014).
\newblock \doi{10.1007/s10994-013-5366-3}.
\newblock \urlprefix\url{http://dx.doi.org/10.1007/s10994-013-5366-3}

\bibitem{SiHe15_Opt}
Silva, C.D., Herrmann, F.J.: Optimization on the hierarchical tucker manifold
  â applications to tensor completion.
\newblock Linear Algebra and its Applications \textbf{481}, 131 -- 173 (2015).
\newblock \doi{http://dx.doi.org/10.1016/j.laa.2015.04.015}.
\newblock
  \urlprefix\url{http://www.sciencedirect.com/science/article/pii/S0024379515002530}

\bibitem{St16_Rie}
Steinlechner, M.: Riemannian optimization for high-dimensional tensor
  completion.
\newblock SIAM Journal on Scientific Computing \textbf{38}(5), S461--S484
  (2016).
\newblock \doi{10.1137/15M1010506}.
\newblock \urlprefix\url{https://doi.org/10.1137/15M1010506}

\bibitem{Vi03_Eff}
Vidal, G.: Efficient classical simulation of slightly entangled quantum
  computations.
\newblock Phys. Rev. Lett. \textbf{91}, 147,902 (2003).
\newblock \doi{10.1103/PhysRevLett.91.147902}.
\newblock \urlprefix\url{http://link.aps.org/doi/10.1103/PhysRevLett.91.147902}

\bibitem{Vi2003_Eff}
Vidal, G.: Efficient classical simulation of slightly entangled quantum
  computations.
\newblock Phys. Rev. Lett. \textbf{91}, 147,902 (2003).
\newblock \doi{10.1103/PhysRevLett.91.147902}.
\newblock
  \urlprefix\url{https://link.aps.org/doi/10.1103/PhysRevLett.91.147902}

\bibitem{We72_Per}
Wedin, P.{\AA}.: Perturbation bounds in connection with singular value
  decomposition.
\newblock BIT Numerical Mathematics \textbf{12}(1), 99--111 (1972).
\newblock \doi{10.1007/BF01932678}.
\newblock \urlprefix\url{http://dx.doi.org/10.1007/BF01932678}

\bibitem{WeYiZh12_Sol}
Wen, Z., Yin, W., Zhang, Y.: Solving a low-rank factorization model for matrix
  completion by a nonlinear successive over-relaxation algorithm.
\newblock Mathematical Programming Computation \textbf{4}(4), 333--361 (2012).
\newblock \doi{10.1007/s12532-012-0044-1}.
\newblock \urlprefix\url{http://dx.doi.org/10.1007/s12532-012-0044-1}

\bibitem{We1912_Das}
Weyl, H.: Das asymptotische verteilungsgesetz der eigenwerte linearer
  partieller differentialgleichungen (mit einer anwendung auf die theorie der
  hohlraumstrahlung).
\newblock Mathematische Annalen \textbf{71}(4), 441--479 (1912).
\newblock \doi{10.1007/BF01456804}.
\newblock \urlprefix\url{http://dx.doi.org/10.1007/BF01456804}

\bibitem{Wh92_Den}
White, S.R.: Density matrix formulation for quantum renormalization groups.
\newblock Phys. Rev. Lett. \textbf{69}, 2863--2866 (1992).
\newblock \doi{10.1103/PhysRevLett.69.2863}.
\newblock \urlprefix\url{http://link.aps.org/doi/10.1103/PhysRevLett.69.2863}

\end{thebibliography}
\normalsize

\section{Appendix (Experimental Data)}\label{sec:appendixB}
Following are the precise values for Figures \ref{domino_plot} and \ref{generic_plot}, \newwo{for $R_C := \|A - M\|_C$ and $R_P := \|A - M\|_P$.}
%
%
\begin{table}[H]
\centering
\resizebox{\columnwidth}{!}{%
 \begin{tabular}{|c|c||c|c|c|c||c|c|c|c|}
 \hline
 \multicolumn{2}{|c|}{$n=12$} & \multicolumn{4}{c||}{ALS} & \multicolumn{4}{c|}{SALSA} \\
 \hline
 $d$ & $c_{\mathrm{sf}}$ & $\geomean{R_C/\|M_C\|}$ & $\geomean{R_P/\|M_P\|}$ & $\arithmean{\mbox{time}}$ & $\arithmean{\mbox{iter}}$ & $\geomean{R_C/\|M_C\|}$ & $\geomean{R_P/\|M_P\|}$ & $\arithmean{\mbox{time}}$ & $\arithmean{\mbox{iter}}$\\
\hline \multirow{3}{*}{6} & 2 & 6.9e-04(2.1) & 1.1e-04(2.9) & 85(57) & 467(151) &3.1e-04(1.1) & 6.8e-06(3.7) & 81(10) & 412(33)\\
& 4 & 2.0e-04(1.9) & 3.9e-05(3.8) & 131(86) & 527(154) &5.8e-05(1.5) & 1.4e-06(2.0) & 170(29) & 546(58)\\
& 6 & 4.2e-05(1.8) & 4.7e-06(2.8) & 215(76) & 624(108) &2.3e-05(1.4) & 1.5e-06(1.4) & 209(23) & 581(41)\\
\hline \multirow{3}{*}{9} & 2 & 7.6e-04(2.1) & 2.3e-04(4.4) & 145(88) & 606(198) &1.8e-04(1.1) & 6.9e-06(2.8) & 129(9) & 455(23)\\
& 4 & 8.0e-05(1.1) & 2.0e-05(1.1) & 276(57) & 804(81) &2.7e-05(1.2) & 7.4e-07(2.0) & 298(34) & 644(40)\\
& 6 & 6.6e-05(1.2) & 2.5e-05(1.2) & 355(94) & 834(105) &9.6e-06(1.2) & 5.5e-07(1.5) & 457(50) & 732(48)\\
\hline \multirow{3}{*}{15} & 2 & 6.6e-04(1.5) & 3.3e-04(2.1) & 384(113) & 951(159) &1.4e-04(1.4) & 1.1e-05(1.8) & 266(20) & 498(31)\\
& 4 & 3.2e-05(1.1) & 6.8e-06(1.1) & 961(62) & 1396(44) &3.0e-05(1.1) & 4.3e-06(1.6) & 540(35) & 642(32)\\
& 6 & 2.7e-05(1.1) & 8.3e-06(1.1) & 1278(85) & 1416(47) &5.5e-06(1.8) & 2.9e-07(1.6) & 1133(92) & 827(48)\\
 \hline
 \end{tabular}
 }
 \newline
\vspace*{0.2 cm}
\newline
 \resizebox{\columnwidth}{!}{%
 \begin{tabular}{|c|c||c|c|c|c||c|c|c|c|}
 \hline
 \multicolumn{2}{|c|}{$n=20$} & \multicolumn{4}{c||}{ALS} & \multicolumn{4}{c|}{SALSA} \\
 \hline
 $d$ & $c_{\mathrm{sf}}$ & $\geomean{R_C/\|M_C\|}$ & $\geomean{R_P/\|M_P\|}$ & $\arithmean{\mbox{time}}$ & $\arithmean{\mbox{iter}}$ & $\geomean{R_C/\|M_C\|}$ & $\geomean{R_P/\|M_P\|}$ & $\arithmean{\mbox{time}}$ & $\arithmean{\mbox{iter}}$\\
\hline \multirow{3}{*}{6} & 2 & 3.8e-03(1.1) & 1.7e-03(1.1) & 89(38) & 378(93) &8.9e-04(1.3) & 3.4e-05(2.9) & 119(15) & 375(34)\\
& 4 & 5.5e-04(1.2) & 1.7e-04(1.2) & 157(45) & 468(74) &2.1e-04(1.3) & 4.0e-06(1.5) & 256(30) & 506(36)\\
& 6 & 4.2e-04(1.2) & 1.6e-04(1.4) & 198(62) & 483(80) &1.1e-04(1.4) & 5.0e-06(1.3) & 329(27) & 536(32)\\
\hline \multirow{3}{*}{9} & 2 & 2.6e-03(1.0) & 1.4e-03(1.1) & 204(101) & 536(141) &4.6e-04(1.1) & 2.4e-05(2.2) & 224(15) & 452(26)\\
& 4 & 2.9e-04(1.1) & 1.0e-04(1.1) & 426(71) & 758(73) &1.3e-04(1.3) & 2.7e-06(1.5) & 452(57) & 583(38)\\
& 6 & 2.0e-04(1.1) & 7.2e-05(1.4) & 618(151) & 818(103) &4.2e-05(1.1) & 2.0e-06(1.3) & 748(53) & 692(23)\\
\hline \multirow{3}{*}{15} & 2 & 1.6e-03(1.0) & 8.6e-04(1.0) & 698(208) & 960(166) &1.0e-03(1.9) & 1.8e-04(6.0) & 361(64) & 417(46)\\
& 4 & 1.1e-04(1.1) & 2.3e-05(1.2) & 1766(123) & 1402(53) &1.0e-04(1.1) & 1.5e-05(1.5) & 899(50) & 608(27)\\
& 6 & 8.2e-05(1.0) & 2.8e-05(1.1) & 2299(139) & 1392(45) &2.2e-05(1.2) & 9.2e-07(1.4) & 1873(188) & 782(44)\\
\hline
\end{tabular}
}
\vspace{0.1cm}
\caption{\label{domino_table} Results for Subsection \ref{sec:domino_sec} (with arithmetic and geometric variances in brackets) using a (pure) Matlab implementation. 
For ALS, exact least squares solution are computed, whereas for SALSA, coarse CG is used. Note that most iterations are performed while the rank is not at its maximum yet.}
\end{table}
%
%
\begin{table}[H]
\centering
\resizebox{\columnwidth}{!}{%
 \begin{tabular}{|c|c||c|c|c|c||c|c|c|c|}
 \hline
 \multicolumn{2}{|c|}{$n=12$} & \multicolumn{4}{c||}{ALS} & \multicolumn{4}{c|}{SALSA} \\
 \hline
 $d$ & $c_{\mathrm{sf}}$ & $\geomean{R_C/\|M_C\|}$ & $\geomean{R_P/\|M_P\|}$ & $\arithmean{\mbox{time}}$ & $\arithmean{\mbox{iter}}$ & $\geomean{R_C/\|M_C\|}$ & $\geomean{R_P/\|M_P\|}$ & $\arithmean{\mbox{time}}$ & $\arithmean{\mbox{iter}}$\\
 \hline
& 2 & 6.5e-03(3.2) & 5.3e-03(3.4) & 72(46) & 423(139) &6.5e-05(1.8) & 1.1e-06(5.8) & 69(16) & 359(66)\\
& 4 & 4.4e-03(1.2) & 4.1e-03(1.3) & 34(34) & 258(114) &3.7e-06(1.8) & 1.9e-08(5.5) & 133(32) & 497(87)\\
& 6 & 3.8e-03(1.2) & 3.6e-03(1.2) & 47(40) & 288(122) &4.7e-07(2.3) & 4.5e-09(4.0) & 157(34) & 520(88)\\
 \hline
 \end{tabular}
 }
 \newline
\vspace*{0.2 cm}
\newline
 \resizebox{\columnwidth}{!}{%
 \begin{tabular}{|c|c||c|c|c|c||c|c|c|c|}
 \hline
 \multicolumn{2}{|c|}{$n=12$} & \multicolumn{4}{c||}{ALS} & \multicolumn{4}{c|}{SALSA} \\
 \hline
 $d$ & $c_{\mathrm{sf}}$ & $\geomean{R_C/\|M_C\|}$ & $\geomean{R_P/\|M_P\|}$ & $\arithmean{\mbox{time}}$ & $\arithmean{\mbox{iter}}$ & $\geomean{R_C/\|M_C\|}$ & $\geomean{R_P/\|M_P\|}$ & $\arithmean{\mbox{time}}$ & $\arithmean{\mbox{iter}}$\\
 \hline
& 2 & 9.7e-02(5.2) & 4.0e-02(6.5) & 50(54) & 322(167) &8.3e-04(1.5) & 9.7e-06(3.1) & 65(11) & 345(37)\\
& 4 & 2.9e-03(38.7) & 4.7e-04(151.0) & 104(102) & 482(260) &7.3e-06(7.0) & 1.3e-07(6.7) & 130(40) & 506(117)\\
& 6 & 3.1e-06(1506.0) & 1.2e-06(2386.6) & 125(79) & 576(271) &1.1e-08(1.4) & 3.6e-09(1.0) & 175(17) & 579(48)\\
\hline
\end{tabular}
}
 \newline
\vspace*{0.2 cm}
\newline
 \resizebox{\columnwidth}{!}{%
 \begin{tabular}{|c|c||c|c|c|c||c|c|c|c|}
 \hline
 \multicolumn{2}{|c|}{$n=12$} & \multicolumn{4}{c||}{ALS} & \multicolumn{4}{c|}{SALSA} \\
 \hline
 $d$ & $c_{\mathrm{sf}}$ & $\geomean{R_C/\|M_C\|}$ & $\geomean{R_P/\|M_P\|}$ & $\arithmean{\mbox{time}}$ & $\arithmean{\mbox{iter}}$ & $\geomean{R_C/\|M_C\|}$ & $\geomean{R_P/\|M_P\|}$ & $\arithmean{\mbox{time}}$ & $\arithmean{\mbox{iter}}$\\
 \hline
& 2 & 4.7e-02(1.1) & 4.6e-02(1.1) & 66(72) & 291(166) &1.3e-03(5.7) & 1.9e-04(14.1) & 53(16) & 185(55)\\
& 4 & 4.4e-02(1.3) & 4.1e-02(1.5) & 74(76) & 288(166) &5.5e-05(1.3) & 2.9e-06(2.4) & 138(19) & 328(34)\\
& 6 & 3.4e-02(4.4) & 3.0e-02(6.4) & 115(167) & 319(256) &1.9e-05(1.4) & 7.2e-07(2.5) & 248(50) & 408(56)\\
\hline
\end{tabular}
}
\vspace{0.1cm}
\caption{\label{generic_table} Results for Subsection \ref{sec:generic_sec} (with arithmetic and geometric variances in brackets) using a (pure) Matlab implementation. 
For ALS, exact least squares solution are computed, whereas for SALSA, coarse CG is used. Note that most iterations are performed while the rank is not at its maximum yet.}
\end{table}

%
%
\end{document}
